\DeclarePairedDelimiter{\ceil}{\lceil}{\rceil}
\newtheorem{thm}{Theorem}
\crefname{thm}{Theorem}{Theorems}
\newtheorem{prop}[thm]{Proposition}
\crefname{prop}{Proposition}{Propositions}
\newtheorem{lem}[thm]{Lemma}
\crefname{lem}{Lemma}{Lemmas}
\newtheorem{fact}[thm]{Fact}
\crefname{fact}{Fact}{Facts}
\newtheorem{cor}[thm]{Corollary}
\crefname{cor}{Corollary}{Corollaries}
\newtheorem*{thm*}{Theorem}
\newtheorem*{thmA}{Theorem A}
\newtheorem*{thmB}{Theorem B}
\numberwithin{thm}{section}
\theoremstyle{definition}
\newtheorem{defn}[thm]{Definition}
\newtheorem{ex}[thm]{Example}
\newcommand{\N}{\mathbb{N}}
\newcommand{\R}{\mathbb{R}}
\newcommand{\Z}{\mathbb{Z}}
\newcommand{\Q}{\mathbb{Q}}
\newcommand{\bd}{\operatorname{bd}}
\newcommand{\norm}[1]{\left\lVert#1\right\rVert}
\newcommand{\floor}[1]{\left\lfloor #1\right\rfloor}
\newcommand{\abs}[1]{\left|#1\right|}
\newcommand{\set}[1]{\left\{#1\right\}}
\newcommand{\inter}[1]{\left[#1\right]}
\newcommand{\lointer}[1]{\left(#1\right]}
\newcommand{\rointer}[1]{\left[#1\right)}
\newcommand{\mc}[1]{\mathcal{#1}}
\newcommand{\eps}{\varepsilon}
\renewcommand{\phi}{\varphi}
\renewcommand{\theta}{\vartheta}
\newcommand{\xto}{\xrightarrow}
\author[P. Hieronymi]{Philipp Hieronymi}
\address{Mathematical Institute\\ University of Bonn\\
Endenicher Allee 60\\ 53115 Bonn\\ Germany}
\email{hieronymi@math.uni-bonn.de}
\author[S. Manthe]{Sven Manthe}
\address{Mathematical Institute\\ University of Bonn\\
Endenicher Allee 60\\ 53115 Bonn\\ Germany}
\email{sven.manthe@uni-bonn.de}
\author[C. Schulz]{Chris Schulz}
\address{Department of Pure Mathematics\\
200 University Avenue West\\
Waterloo, Ontario\\
N2L 3G1\\
Canada
}
\email{chris.schulz@uwaterloo.ca}
\title{A Cobham theorem for scalar multiplication}
\begin{document}

\begin{abstract}
Let $\alpha,\beta \in \R_{>0}$ be such that $\alpha,\beta$ are quadratic and 
$\Q(\alpha)\neq \Q(\beta)$. Then every subset of $\R^n$ definable in both
$(\R,{<},+,\Z,x\mapsto \alpha x)$ and $(\R,{<},+,\Z,x\mapsto \beta x)$ is already definable in $(\R,{<},+,\Z)$. As a consequence we generalize Cobham-Sem\"enov theorems for sets of real numbers to $\beta$-numeration systems, where $\beta$ is a quadratic irrational.
\end{abstract}

\maketitle

\section{Introduction}

\noindent This paper is part of a larger enterprise to study expansions of the structure $(\R,{<},+, \Z)$ by fragments of multiplication.  As an easy consequence of Büchi’s theorem on the decidability of the monadic second-order theory of one successor \cite{Buechi}, the first-order theory of $(\R,{<},+, \Z)$ is known to be decidable. Arguably due to Skolem \cite{skolem}, but later independently rediscovered by Weispfenning \cite{Weispf} and Miller \cite{miller-ivp}, it has quantifier-elimination in the larger signature expanded by function symbols for the floor function and $x\mapsto qx$ for each $q\in \Q$. These results haven been implemented and used in algorithm verification
of properties of reactive and hybrid systems, see for example \cite{LIRA, BJW, FQSW,VIRAS}. Therefore it is only natural to consider more expressive expansions of this structure, in particular by fragments of multiplication. Gödel’s famous first incompleteness theorem obviously implies that  expanding $(\R,<,+, \Z)$ by a symbol for multiplication on $\R$ results in an structure with an undecidable theory.
However, even substantially smaller fragments of multiplication yield undecidability. Let $\alpha$ be a real number and let $\mathcal{R}_{\alpha}$ denote the structure $(\R,{<},+,\Z,x\mapsto \alpha x)$. By \cite[Theorem A]{H-multiplication} the first-order theory of $\mathcal{R}_{\alpha}$ is only decidable when $\alpha$ is a quadratic number, that is, a solution to a quadratic polynomial equation with integer coefficients. Thus studying $\mathcal{R}_{\alpha}$ and its reducts and fragments has become an active area of research, in particular when $\alpha$ is quadratic (see for example \cite{hnp,decidsturmian,KhZ}). Here, we answer a question raised in \cite[Question 6]{H-Twosubgroups}. Throughout this paper we follow the convention common in theoretical computer science that definability means definability without parameters.
\begin{thmA}
Let $\alpha,\beta \in \R_{>0}$ be such that $\alpha,\beta$ are quadratic and $\Q(\alpha)\neq \Q(\beta)$, and let $X\subseteq \R^n$ be definable in both $\mathcal{R}_{\alpha}$ and $\mathcal{R}_{\beta}$. Then $X$ is definable in $(\R,{<},+,\Z)$.
\end{thmA}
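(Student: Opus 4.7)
My plan is to combine two ingredients: a characterization of $\mathcal{R}_\alpha$-definability on bounded sets via recognizability in an $\alpha$-numeration system, and a Cobham--Sem\"enov type theorem for such numeration systems that exploits $\Q(\alpha)\neq \Q(\beta)$.

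First I would reduce to the case $X\subseteq[0,1)^n$. The tiling $\R^n=\bigsqcup_{k\in\Z^n}(k+[0,1)^n)$ is visible to each of the three structures $(\R,<,+,\Z)$, $\mathcal{R}_\alpha$, and $\mathcal{R}_\beta$, so the problem splits into two pieces: for every fixed translate $A=(X-k)\cap[0,1)^n$ the index set $\{k\in\Z^n:(X-k)\cap[0,1)^n=A\}$ needs to be Presburger, and each of the (finitely many, by a standard periodicity argument for definable sets in $\mathcal{R}_\alpha$) slices $A$ needs to be $(\R,<,+,\Z)$-definable. The first piece is already immediate from the definability of $\Z$ and the quantifier-elimination for $(\R,<,+,\Z)$; the real content is the second piece, so from here on $X\subseteq[0,1)^n$.

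Next I would invoke the known characterization for quadratic $\alpha$ (from the body of work on $\mathcal{R}_\alpha$ cited in the introduction, e.g.\ \cite{H-multiplication,hnp,H-Twosubgroups}): $\mathcal{R}_\alpha$-definable subsets of $[0,1)^n$ correspond bijectively to $\omega$-regular (B\"uchi-recognizable) sets of $\alpha$-expansions, where the $\alpha$-numeration is the Ostrowski-type system associated to the eventually periodic continued fraction of $\alpha$. Our set $X$ is thus simultaneously $\alpha$-recognizable and $\beta$-recognizable, and the heart of the argument becomes a Cobham--Sem\"enov theorem for these numeration systems: simultaneous recognizability in two quadratic numeration systems from different quadratic fields should force $X$ to be Presburger, i.e.\ definable in $(\R,<,+,\Z)$.

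The main obstacle will be this Cobham step. The intended mechanism is that an $\omega$-regular language in the $\alpha$-numeration has a natural group of ``periods'' (shifts of the real line that preserve the automaton's asymptotic behaviour), and these periods, coming from the substitutive structure of the $\alpha$-numeration, lie in $\Q(\alpha)^n$; symmetrically for $\beta$. The hypothesis $\Q(\alpha)\cap\Q(\beta)=\Q$ then constrains any common period to be rational, which one hopes is strong enough to collapse the recognizable set to a Presburger one. The delicate points I expect to grind through are (i) making ``period group lies in $\Q(\alpha)^n$'' rigorous, which requires a pumping-style analysis of how the continued-fraction substitution acts on tails of $\alpha$-expansions, and (ii) handling the $\omega$-word setting rather than the finite-word setting of classical Cobham, where boundary behaviour of expansions near $0$ and $1$ and the interaction of multiple coordinates both have to be managed. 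Once the Cobham step is in hand, combining it with Step 1 recovers $(\R,<,+,\Z)$-definability of $X$ as claimed.
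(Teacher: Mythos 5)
Your reduction to the bounded case contains a genuine gap, and it is exactly where a large part of the paper's work lives. You claim that for each slice $A=(X-k)\cap[0,1)^n$ the index set $\{k\in\Z^n:(X-k)\cap[0,1)^n=A\}$ is ``immediately'' Presburger from the definability of $\Z$ and quantifier elimination for $(\R,<,+,\Z)$. That is not so: these index sets are subsets of $\Z^n$ definable in $\mathcal{R}_\alpha$ and in $\mathcal{R}_\beta$, and such structures define many non-Presburger subsets of $\Z^n$ (for instance the set of denominators of the convergents of $\alpha$). Showing that a subset of $\N^n$ definable in both $\mathcal{R}_\alpha$ and $\mathcal{R}_\beta$ is Presburger is itself a Cobham--Sem\"enov theorem over the integers for the two quadratic numeration systems; in the paper this is \cref{cobhamsemenov}, proved by adapting B\`es' argument via the growth/pumping estimate \cref{pumping}, and it genuinely uses $\Q(\alpha)\ne\Q(\beta)$. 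Your proposal silently assumes this integer-part case away. Relatedly, your claim that there are only finitely many distinct slices ``by a standard periodicity argument for definable sets in $\mathcal{R}_\alpha$'' is false for a single $\mathcal{R}_\alpha$: the set $\{x\ge 0: x-\floor x\le \alpha\floor x-\floor{\alpha\floor x}\}$ is definable in $\mathcal{R}_\alpha$ and has infinitely many distinct unit-cube slices. Finiteness of the slices has to be extracted from the hypothesis that the bounded case has already been dealt with; the paper does this in \cref{definbounded} (via \cref{bc1fact,verticesdef}), producing either a bad bounded set or a bad subset of $\N^d$, rather than the clean ``index sets plus finitely many slices'' splitting you describe.

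On the bounded case your outline points in the right general direction (quadratic definability corresponds to recognizability in an Ostrowski/$\alpha$-power numeration system, then a Cobham step using $\Q(\alpha)\cap\Q(\beta)=\Q$), and this matches the paper's route through \cref{thm:recdef,alphgammreg} and Theorem~B. But the Cobham step itself is only a hope in your write-up (``period group lies in $\Q(\alpha)^n$ \dots which one hopes is strong enough''), whereas the actual argument requires several nontrivial moves: locating a rational bad point (\cref{badpointsex}), reducing from $[0,1]^d$ to $[0,1]$ (\cref{definmultidim}), extracting $\alpha^N$- and $\beta^N$-product stability from the automata, proving ultimate periodicity of product-stable recognizable sets (\cref{prodstabultper}, via van der Waerden), and finally showing the sum-stability group is all of $\R$ (\cref{prodstabletriv}), using that $\beta^{-1}$ has no ultimately periodic $\alpha$-representation when $\Q(\alpha)\cap\Q(\beta)=\Q$. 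There is also a subtlety you gloss over: definability in $\mathcal{R}_\alpha$ corresponds to the \emph{parallel} encoding of integer and fractional parts, not the sequential one, and the equivalence of the two for bounded sets has to be proved (it is part of \cref{thm:recdef}). So as it stands the proposal has a concrete missing ingredient (the integer Cobham--Sem\"enov theorem) and an unproved core (the bounded Cobham step), and the reduction step as formulated would not go through.
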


\noindent By Hieronymi and Tychoniviech \cite{HTycho} the structure $(\R,{<},+,\Z,x\mapsto \alpha x,x \mapsto \beta x)$ defines with parameters multiplication on $\R$ whenever $1,\alpha,\beta$ are $\Q$-linearly independent, and hence defines with parameters every projective set in the sense of descriptive set theory.  It follows by quantifying over parameters needed to define multiplication that the first-order theory of the structure
is undecidable and that every arithmetical subset of $\N^n$ is definable without parameters. Thus the structure generated by sets definable in \emph{either} $\mathcal{R}_{\alpha}$ \emph{or} $\mathcal{R}_{\beta}$ is as logically complicated,  as can be, while by our Theorem A the structure generated by all sets definable in \emph{both} $\mathcal{R}_{\alpha}$ \emph{and} $\mathcal{R}_{\beta}$ is as simple as it could be.\newline

\noindent Theorem A can be seen as an analogue of Cobham's theorem for scalar multiplication. Indeed, as we explain now, the connection is much stronger and more direct than it might appear. The famous theorems of Cobham \cite{Cobham} and Sem\"enov \cite{Semenov} state that for multiplicatively independent $k, \ell \in \N_{>1}$  every subset of $\N^n$ that is both $k$- and $\ell$-recognizable is already definable in $(\N, +)$. This result has been generalized in many direction, both in terms of numeration systems and in terms of the underlying domain $\N$. See Durand and Rigo \cite{durandrigo} for a survey.
A \textbf{Pisot number} is a real algebraic integer greater than 1 all whose Galois conjugates are less than 1 in absolute value. Bès \cite{Bes-CS} shows that for multiplicatively independent Pisot numbers $\alpha,\beta$, and for two linear numeration system $U$, $U'$ whose characteristic polynomials are the minimal polynomials of $\alpha$ and $\beta$ respectively, a subset of $\N^n$ that is both $U$- and $U'$-recognizable is definable in $(\N,+)$. Boigelot et al \cite{BBB10, BBL09} extend the Cobham-Sem\"enov theorem to subsets of $\R$ showing that for multiplicatively independent $k, \ell \in \N_{>1}$ every subset of $\R^n$ that is both weakly $k$- and weakly $\ell$-recognizable is already definable in $(\R, <, +,\Z)$. In this setting over $\R$, it is natural to consider so-called $\beta$-numeration systems introduced by R\'{e}nyi \cite{Renyi}, in which the usual integer base $k$ is replaced by a real number $\beta$ larger than $1$. For details, see \cite[Chapter 7]{Lothaire}. Charlier et al \cite{CLR} introduce the corresponding notion of $\beta$-recognizability. Here we prove an extension of the results from \cite{BBB10, BBL09} to such numeration systems.  
\begin{thmB}
Let $\alpha,\beta  \in \R_{>1}$ be multiplicatively independent irrational Pisot numbers such that $\Q(\alpha)\cap\Q(\beta)=\Q$, and let $X\subseteq[0,1]^d$ be both $\alpha$- and $\beta$-recognizable. Then $X$ is definable in $(\R,{<},+,\Z)$.
\end{thmB}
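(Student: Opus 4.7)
The plan is to reduce Theorem B to Theorem A via a translation from $\beta$-recognizability to first-order definability. The key auxiliary claim is that for every Pisot $\beta > 1$, every $\beta$-recognizable $X \subseteq [0,1]^d$ is first-order definable (without parameters) in $\mathcal{R}_\beta$. Granted this, $X$ lies in the intersection $\mathrm{Def}(\mathcal{R}_\alpha) \cap \mathrm{Def}(\mathcal{R}_\beta)$, and since $\Q(\alpha) \cap \Q(\beta) = \Q$ together with the irrationality of $\alpha,\beta$ forces $\Q(\alpha) \neq \Q(\beta)$, Theorem A immediately yields definability in $(\R,{<},+,\Z)$ --- provided $\alpha$ and $\beta$ are quadratic.

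For the translation claim I would follow the Boigelot--Brusten--Bruy\`ere template of \cite{BBB10, BBL09}, adapted to Pisot bases via the Frougny--Solomyak normalisation transducer. The $\beta$-shift $T_\beta(x) = \beta x - \lfloor \beta x\rfloor$ is definable in $\mathcal{R}_\beta$ (the floor function is definable from $\Z$, $<$, $+$), so the $n$-th $\beta$-digit of $x$ is $a_n(x) = \lfloor \beta T_\beta^{n-1}(x)\rfloor$. Quantification over an automaton run --- an integer sequence labelling each position by an automaton state --- can be replaced by quantification over a real number in $[0,1]$ whose $\beta$-expansion encodes the run, using that the Pisot property guarantees normalisation is finitely computable (hence first-order expressible via $x\mapsto \beta x$). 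The B\"uchi acceptance condition then becomes a first-order sentence in $\mathcal{R}_\beta$ asserting the existence of such an accepting-run real, with admissibility of the digit sequence $(a_n(x_1),\dots,a_n(x_d))_{n\geq 1}$ captured by the sofic constraint on the $\beta$-shift.

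The main obstacle is that Theorem A is stated only for quadratic $\alpha,\beta$, whereas Theorem B permits irrational Pisot numbers of arbitrary degree. When $\alpha$ or $\beta$ has degree $\geq 3$, the structure $\mathcal{R}_\beta$ is undecidable by \cite{H-multiplication}, and the methods underlying Theorem A do not transfer verbatim. To complete the proof for general Pisot pairs I would seek a Pisot analogue of Theorem A, exploiting that every Galois conjugate of a Pisot number lies in the open unit disk. This contractivity should still confer enough geometric tameness on the $\mathcal{R}_\beta$-definable fragment relevant to $\beta$-recognizability (even though the full theory is undecidable) to allow the two ``number-field geometries'' over $\Q(\alpha)$ and $\Q(\beta)$ to be intersected --- under the hypothesis $\Q(\alpha) \cap \Q(\beta) = \Q$ --- down to sets definable in $(\R,{<},+,\Z)$. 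An alternative but parallel route would be to lift B\`es's $\N$-result \cite{Bes-CS} directly to the reals by working with Pisot numeration transducers, bypassing $\mathcal{R}_\beta$ altogether; I expect the model-theoretic strengthening of Theorem A to be the more natural path given the architecture of the paper, and the substantive additional work of Theorem B sits precisely there.
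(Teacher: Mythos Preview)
Your proposed reduction runs in the wrong direction: in this paper Theorem~A is \emph{deduced from} Theorem~B (together with the $\N$-case, \cref{cobhamsemenov}), not the other way around. Invoking Theorem~A to prove Theorem~B would therefore be circular. You correctly spot the mismatch---Theorem~A is stated only for quadratic $\alpha,\beta$ while Theorem~B allows arbitrary irrational Pisot numbers---but your remedy, ``seek a Pisot analogue of Theorem~A'', is not a proof strategy: it is a restatement of the theorem you are trying to prove (indeed a strictly harder one, since it would cover unbounded sets as well).

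The paper's proof of Theorem~B is self-contained and does not pass through $\mathcal{R}_\alpha$-definability at all. It follows the product-stability template of \cite{BBB10}: assuming $X\subseteq[0,1]^d$ is not definable in $(\R,{<},+,1)$, one first extracts (via \cref{badpointsex}, \cref{zerobad}, \cref{lem:prodstable}, and finally \cref{definmultidim}) a one-dimensional sequentially $\mathcal{S}_\alpha$-recognizable set $Y\subseteq[0,1]$ that is both $\alpha^N$- and $\beta^N$-product stable and still not definable in $(\R,{<},+,1)$. The heart of the argument is then \cref{prodstabletriv}: any sequentially $\mathcal{S}_\alpha$-recognizable $Z\subseteq\R_{>0}$ that is $\alpha$- and $\beta$-product stable must be $\emptyset$ or $\R_{>0}$. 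This is where the genuine work lies---one shows that density of $\Pi(Z)$ forces $Z$ to be ultimately periodic (\cref{prodstabultper}), and then a delicate analysis of the sum-stabilizer $\Sigma(Z)$ using \cref{regtestultper} and the hypothesis $\beta\notin\Q(\alpha)$ (\cref{sumstable}) forces $\Sigma(Z)=\R$. The short proof of \cref{thm:B} then just passes from $X$ to $Z=\bigcup_k\alpha^kX$ and applies \cref{prodstabletriv}. Your ``alternative route'' of working directly with numeration systems is in fact the one taken, but the substantive content is the product-/sum-stability machinery of Section~5, which your proposal does not supply.
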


\noindent Theorem B at least partially answers a question of Rigo \cite[p. 48]{rigo}.
When $\alpha,\beta$ are quadratic, we have that $\Q(\alpha)\cap\Q(\beta)=\Q$ if and only if $\Q(\alpha)\neq \Q(\beta)$. Moreover, in this situation $\alpha$ and $\beta$ are multiplicatively independent (see \cref{quadirrmulind} for a proof).
Thus in order to deduce Theorem A from Theorem B, we need to reduce to the bounded case and establish the equivalence of $\alpha$-recognizability and definability in $\mathcal{R}_{\alpha}$. Various results showing the equivalence between recognizability and definability exist and often go back to B\"uchi's original work. For example, see Bruy\`ere et al \cite{BHMV} and Boigelot et al \cite{BRW} for the equivalence between $k$-recognizable and $k$-definable for $k\in \N$. The equivalence between $\beta$-recognizability and $\beta$-definability for Pisot numbers is established in \cite[Theorem 16 \& 22]{CLR}, but no argument is made there that their notion of $\beta$-definability corresponds to definability in $\mathcal{R}_{\beta}$. Here we provide this extra argument in \cref{alphgammreg}, showing how $\alpha$-recognizability corresponds to definability in $\mathcal{R}_{\alpha}$ for quadratic irrational $\alpha$. Indeed, we also show that these notions of recognizability and definability coincide with notions of recognizability using Ostrowski numeration systems instead of $\beta$-numeration systems. This should be of interest in its own right, as it explains the connection between the a priori unrelated work in \cite{CLR} and \cite{H-Twosubgroups}.\newline 

\noindent There is a subtlety regarding recognizability that we need to address in this setting. In \cite{BRW} a positive real number in $k$-ary is encoded as an infinite word over $\{0,\dots,k-1\}\cup \{\star\}$, where $\star$ serves as the radix point\footnote{For ease of exposition, we don't worry here about the encoding of negative numbers.}. Loosely speaking, a positive real number $a$ such that 
\[
a = b_{m} k^m + \dots + b_1 k + b_0 + b_{-1} k^{-1} + \dots,
\]
is encoded as the infinite word $b_m\cdots b_1b_0\star b_{-1}\cdots$. Thus in this setup the integer part of $a$ and the fractional part of $a$ are read \emph{sequentially}. However, there are also encodings that read the integer part and the fractional part in \emph{parallel}. Such an encoding is used for example in Chaudhuri et al \cite{CSV}. Here, the above real number $a$ corresponds to the infinite word
\[
(b_0,b_{-1})(b_1,b_{-2})\cdots (b_{m},b_{-(m+1)})(0,b_{-(m+2)})\cdots
\]
over the alphabet $\{0,\dots,k-1\}^2$. The second encoding is strictly more expressive, as the function mapping $k^i$ to $k^{-i}$ is recognizable using this parallel encoding. In previous work this difference has not been addressed, but here it becomes necessary as definability in $\mathcal{R}_{\alpha}$ corresponds to recognizability using the stronger parallel encoding. The precise definitions of these two different notions of recognizability are given in Section 3 of this paper.\newline

\noindent Following the argument in \cite{BBB10} we want to reduce Theorem A to the special cases that $X\subseteq \Z^d$ or $X \subseteq [0,1]^d$.  In \cite[Section 4.1]{BBB10} the corresponding argument uses special properties of $k$- and $\ell$-recognizable sets. In \cref{definbounded} we will establish  a more general definability criterion and use it instead. This is closely connected to recent work of Bès and Choffrut \cite{BesC1, BesChoffrut2022}. To reduce Theorem A to Theorem B, it is left to show that Theorem A holds when $X\subseteq \Z^d$. This is achieved in Section 6 by adjusting the main argument from \cite{Bes-CS}.\newline

\noindent The proof of Theorem B itself is similar to the one for integer bases in 
\cite{BBB10,BBL09}. However, in \cite[Lemma 6.3]{BBB10} ultimate periodicity of certain sufficiently tame sets is obtained from Cobham's theorem. Although we prove a similar result in \cref{cobhamsemenov}, the reduction does not work in our case as the set of natural numbers cannot be described easily in terms of $\alpha$-power representations when $\alpha$ is irrational. Thus, a more complicated argument for periodicity is required, given here in the proof of \cref{prodstabultper}. It is also worth pointing out that in the case of integer bases Cobham's theorem for multiplicatively independent bases is only obtained for weakly recognizable sets \cite[Theorem 3.3]{BBB10}, and the result for general recognizable sets \cite[Theorem 3.4]{BBB10} needs the stronger assumption that the integer bases have different sets of prime factors.  If $r,s\in\N_{>1}$ are multiplicatively independent, then $s^{-1}$ has an ultimately periodic $r$-representation, and the assumption on prime factors ensures that the period lengths of $s^{-n}$ are unbounded (see \cite[Lemma 6.6]{BBB10}). In contrast, when $\Q(\alpha)\ne\Q(\beta)$, then $\beta^{-1}$ has no ultimately periodic $\alpha$-representation, simplifying the proof of our analogue of \cite[Property 6.7]{BBB10}.
Our assumption $\Q(\alpha)\ne\Q(\beta)$ also is strictly stronger than multiplicative independence, but optimal in the sense that no version of our result holds for $\Q(\alpha)=\Q(\beta)$.\newline

\noindent One last comment about the proof: the analogue of Sem\"enov's theorem for subsets of $\R^n$ is proved in \cite{BBL09}, using ideas from \cite{BBB10}. We do not follow the argument in \cite{BBL09}, but rather  combine \cite[Section 3.2]{BBL09} more directly with the argument in \cite{BBB10} in our \cref{definmultidim}.

\subsection*{Acknowledgments}  P.H. and S.M. were partially supported by the Hausdorff Center for Mathematics at the University of Bonn (DFG EXC 2047). P.H. and C.S. were partially supported by NSF grant DMS-1654725. We thank Alexis Bès for answering our question.

\section{Preliminaries}

In this section we recall preliminary results from automata theory, number theory and logic. Before we do so, we fix some notation.\newline

\noindent Let $X$ be a set. We write $\#X$ for the cardinality of $X$.
Let $\Sigma$ be an alphabet. Given a (possibly infinite word) $w$ over $\Sigma$, we write $w_i$ for the $i$-th letter of $w$, and $|w|$ for the length of $w$. We let $\Sigma^*$ denote the set of finite words over $\Sigma$, and $\Sigma^{\omega}$ the set of infinite words over $\Sigma$. 
For $u_1,\dots, u_n\in \Sigma^{\omega}$,
we define the \textbf{convolution}  $c(u_1,\dots,u_n)$ as the element of $(\Sigma^n)^{\omega}$ whose value at position $i$ is the $n$-tuple consisting of the values of $u_1,\dots, u_n$ at position $i$.
For $i \in \{1,\dots,n\}$, the projection $\pi_i : (\Sigma^n)^{\omega} \to \Sigma^{\omega}$ is the function that maps $c(u_1,\dots,u_n)$ to $u_i$.

\subsection{$\omega$-regular languages}
We recall some well-known definitions and results about $\omega$-regular languages. Proofs can be found for example in Khoussainov and Nerode \cite{aut_theory}.\\

\noindent A (non-deterministic) \textbf{Muller automaton} is a quintuple $\mc A=(Q,\Sigma,T,I,F)$ with $Q$ a finite set of states, $\Sigma$ a finite alphabet, $T\subseteq Q\times\Sigma\times Q$ a transition relation, $I\subseteq Q$ a set of initial states, and $F\subseteq 2^Q$ a set of acceptance conditions. Instead of $(q_1,s,q_2)\in T$ we also write $q_1\xto{s}q_2$. An infinite word $w=w_0w_1\cdots\in\Sigma^\omega$ is \textbf{accepted} by $\mc A$ if there is a sequence $(q_n)_{n\in \N}\in Q^\N$ of states such that $q_0\in I$, and $q_n\xto{w_n}q_{n+1}$ for all $n\in \N$, and $\set{q\in Q : \set{n:q_n=q} \text{ is infinite}}\in F$. The language $L(\mc A)$ accepted by $\mc A$ is the set of words accepted by $\mc A$.\\

\noindent Let $\mc A=(Q,\Sigma,T,I,F)$ be a Muller automaton. We say $\mc A$ is \textbf{Büchi automaton} if there is $C\subseteq Q$ such that
\[
F=\set{B\subseteq Q: C\subseteq B}.
\]
We say $\mc A$ is \textbf{deterministic} if $\#I\le1$ and for all $p\in Q,s\in\Sigma$ there is at most one $q\in Q$ with $p\xto{s}q$. We say $\mc A$ is \textbf{total} if $\#I\ge1$ and for all $p\in Q,s\in\Sigma$ there is at least one $q\in Q$ with $p\xto{s}q$.\\

\noindent Let $\Sigma$ be an alphabet.  An \textbf{$\omega$-language} $K$ is a subset of $\Sigma^\omega$. For $L\subseteq\Sigma^*$, we define the $\omega$-language
\[
L^\omega:=\set{v_1v_2v_3\cdots \ : \ v_i\in L\text{ and }\abs{v_i}>0 \text{ for each $i\in \N$}},LK:=\set{vw\ : \ v\in L,w\in K}.
\]
\begin{fact}\label{condomegreg}
   Let $L$ be an $\omega$-language. Then the following are equivalent:
    \begin{enumerate}
    \item There is a Muller automaton $\mc A$ with $L(\mc A)=L$.
    \item There is a total, deterministic Muller automaton $\mc A$ with $L(\mc A)=L$.
    \item There is a total Büchi automaton $\mc A$ with $L(\mc A)=L$.
    \item There are regular languages $K_i,L_i$ with $L=\bigcup_{i=1}^nK_iL_i^\omega$.
    \end{enumerate}
    In this case, $L$ is called $\omega$-\textbf{regular}.
\end{fact}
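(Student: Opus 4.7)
The plan is to establish the equivalences in two blocks: first the cycle $(3)\Rightarrow(1)\Rightarrow(4)\Rightarrow(3)$ via elementary constructions on nondeterministic automata, and separately the equivalence $(1)\iff(2)$ via McNaughton's determinization theorem.

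The implication $(3)\Rightarrow(1)$ is immediate, as a Büchi automaton is by definition a Muller automaton with acceptance family $\set{B\subseteq Q:C\subseteq B}$. For $(1)\Rightarrow(4)$, given a Muller automaton $\mc A=(Q,\Sigma,T,I,F)$, I would write $L(\mc A)=\bigcup_{G\in F,\,q\in G}K_{G,q}\cdot M_{G,q}^\omega$, where $K_{G,q}\subseteq\Sigma^*$ is the regular language of words labeling finite runs from $I$ to $q$, and $M_{G,q}\subseteq\Sigma^*$ is the regular language of words labeling closed paths at $q$ whose set of visited states is exactly $G$. The inclusion $\supseteq$ is clear. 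For $\subseteq$, an accepted word has a run whose set of infinitely-visited states is some $G\in F$; picking $q\in G$ and using that between consecutive visits to $q$ after some point the trajectory stays in $G$, I would group consecutive $q$-to-$q$ loops into blocks each of which visits all of $G$. For $(4)\Rightarrow(3)$, closure under finite union (easy for total Büchi automata by disjoint unions of start states) reduces matters to a single term $K\cdot L^\omega$, which is handled by the standard construction of concatenating a DFA for $K$ with a DFA for $L$ whose accepting states loop back to its initial state, and declaring that initial state to be the Büchi acceptance set.

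The equivalence $(1)\iff(2)$ is McNaughton's theorem. The implication $(2)\Rightarrow(1)$ requires no argument, while $(1)\Rightarrow(2)$ is the technical heart of the fact and the principal obstacle. I would invoke Safra's determinization, which converts a nondeterministic Muller automaton (equivalently, by the first block, a nondeterministic Büchi automaton) into a deterministic Muller automaton whose states are ordered labeled trees tracking both the subset configurations arising in the nondeterministic simulation and a history of which branches have visited an accepting state since the last reset; the acceptance family on these tree-states is chosen so that some tree node persists cofinitely precisely when the original automaton admits an accepting run. Totality of the resulting automaton can be ensured without loss of generality by routing all missing transitions to a nonaccepting sink.
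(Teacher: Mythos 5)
The paper offers no proof of this fact at all: it is quoted as standard, with a pointer to Khoussainov--Nerode. Your outline is exactly the classical argument from that literature: the cycle $(3)\Rightarrow(1)\Rightarrow(4)$ with the decomposition $L=\bigcup_{G\in F,q\in G}K_{G,q}M_{G,q}^\omega$ is correct (your grouping of $q$-to-$q$ segments so that each block visits exactly $G$ is the right point), and invoking McNaughton/Safra for $(1)\Rightarrow(2)$ is no less self-contained than what the paper does.

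Two concrete issues in your $(4)\Rightarrow(3)$ step, however. First, the construction ``take a DFA for $L$, loop its accepting states back to the initial state, and make the initial state the Büchi acceptance set'' is the classic unsound version of the $\omega$-iteration: if the initial state of the $L$-automaton has incoming transitions, a run may visit it infinitely often without the word splitting into $L$-blocks. For $L=a^*b$ the DFA has an $a$-self-loop at the initial state, so your automaton accepts $a^\omega$ even though $a^\omega\notin(a^*b)^\omega$. The standard repair is to first normalize the automaton for $L$ so that its initial state has no incoming transitions (and to discard $\varepsilon$ from $L$, which does not change $L^\omega$); then the only returns to the initial state are via the added transitions, and each segment between consecutive visits lies in $L$. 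Second, be aware that the paper's definition of Büchi acceptance is the conjunctive one, $F=\set{B\subseteq Q:C\subseteq B}$ (every state of $C$ must recur), under which your ``disjoint union of start states'' for closure under union breaks down: a run stays in one component and can never cover $C_1\cup C_2$. In fact under that literal reading item (3) of the statement fails --- $\set{a^\omega,b^\omega}$ is accepted by a total deterministic Muller automaton but by no total Büchi automaton in the conjunctive sense, since from a state of $C$ recurring on both the $a^\omega$-run and the $b^\omega$-run one can splice an accepting run on a mixed word --- so your proof, like the cited literature, must read ``Büchi'' with the usual condition $F=\set{B\subseteq Q:B\cap C\neq\emptyset}$ (for the singleton acceptance set produced by your $K\cdot L^\omega$ construction the two readings coincide, so only the union step is affected).
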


\noindent Of crucial importance for this paper is the fact that $\omega$-regular languages are closed under the usual first-order logical operations.

\begin{fact}\label{fact:buechi}
    The collection of $\omega$-regular languages is stable under boolean combinations, convolutions and projections.
\end{fact}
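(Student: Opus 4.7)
The plan is to verify each of the five operations separately, using \cref{condomegreg} to switch freely between the equivalent presentations of $\omega$-regular languages and to pick whichever one is most convenient for each closure property.

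Union, intersection and convolution all fall to variants of the product construction. For union I would take the disjoint union of two Muller automata $\mc A_1,\mc A_2$ for $L_1,L_2$, with the initial set the union of the initial sets and the acceptance family obtained by viewing $F_1$ and $F_2$ as subsets of the combined state space. For intersection I would first pass to total deterministic Muller automata using \cref{condomegreg}(2), then form the synchronous product on $Q_1\times Q_2$ over the common alphabet $\Sigma$, declaring $B\subseteq Q_1\times Q_2$ accepting iff its projections to $Q_1$ and $Q_2$ lie in $F_1$ and $F_2$ respectively. Closure under convolution of $L_1,\ldots,L_n\subseteq\Sigma^\omega$ is essentially the same construction with alphabet $\Sigma^n$, the $i$th factor reading the $i$th coordinate of each letter. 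For projection $\pi_i\colon(\Sigma^n)^\omega\to\Sigma^\omega$ applied to a Muller automaton $\mc A$ for $L\subseteq(\Sigma^n)^\omega$, relabel each transition $q\xto{(a_1,\ldots,a_n)}q'$ as $q\xto{a_i}q'$ and keep $Q$, $I$ and $F$ unchanged; the existential nature of non-deterministic acceptance correctly hides the discarded coordinates, so the resulting automaton accepts exactly $\pi_i(L)$.

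The only operation with genuine content is complement, and this is where the main obstacle would live — but it is absorbed entirely into the deterministic Muller equivalence in \cref{condomegreg}(2), which is essentially McNaughton's theorem. Once a total deterministic Muller automaton $(Q,\Sigma,T,I,F)$ for $L$ is in hand, every $w\in\Sigma^\omega$ has a unique run $(q_n)_{n\in\N}$, and $w\in L$ iff the set of states visited infinitely often belongs to $F$. Replacing $F$ by $2^Q\setminus F$ therefore yields a total deterministic Muller automaton whose language is exactly $\Sigma^\omega\setminus L$, completing the proof.
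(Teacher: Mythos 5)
Your proof is correct: the paper itself gives no argument for this fact, citing it as standard (to Khoussainov--Nerode), and what you wrote is precisely the standard proof --- product-type constructions for union, intersection and convolution, transition relabelling with nondeterminism for projection, and complementation of the acceptance family of a total deterministic Muller automaton, with the real work (McNaughton's determinization) correctly identified as being packaged into \cref{condomegreg}(2). No gaps.
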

\begin{fact}\label{regtestultper}
    Let $K,L$ be $\omega$-regular languages over $\Sigma$. If $K,L$ contain the same ultimately periodic words, then $K=L$.
\end{fact}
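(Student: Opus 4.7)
The strategy is to reduce the equality $K=L$ to an emptiness question. By \cref{fact:buechi}, the symmetric difference $M:=(K\setminus L)\cup(L\setminus K)$ is again $\omega$-regular, and by hypothesis $M$ contains no ultimately periodic word. So it suffices to prove the auxiliary claim that \emph{every nonempty $\omega$-regular language contains an ultimately periodic word}; applied to $M$, this forces $M=\emptyset$, and hence $K=L$.

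To establish the auxiliary claim I would appeal to the Kleene-style characterization in \cref{condomegreg}(4): write a nonempty $\omega$-regular language as $\bigcup_{i=1}^nK_iL_i^\omega$ with $K_i,L_i\subseteq\Sigma^*$ regular. Since the union is nonempty, some $K_iL_i^\omega$ is nonempty, which forces $K_i\ne\emptyset$ and, by the convention in the excerpt that elements of $L_i^\omega$ are concatenations of positive-length words from $L_i$, also forces $L_i$ to contain some $v$ with $\abs{v}>0$. Picking such a $v$ together with any $u\in K_i$, the word $uv^\omega=uvvv\cdots$ lies in $K_iL_i^\omega$ and is ultimately periodic, proving the claim.

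The argument is short and I do not anticipate a real obstacle; the only step that requires attention is selecting the appropriate characterization of $\omega$-regularity from \cref{condomegreg}. A direct automaton-theoretic alternative would be to take a total deterministic Muller (or Büchi) automaton recognizing the nonempty language and extract a lasso-shaped accepting run whose cycle visits an accepting configuration infinitely often, again yielding an ultimately periodic accepted word; but the regular-expression description is the cleaner route given the facts already assembled.
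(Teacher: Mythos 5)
Your proposal is correct and follows essentially the same route as the paper: reduce to the symmetric difference $K\triangle L$, which is $\omega$-regular and contains no ultimately periodic word, and then use the decomposition $\bigcup_{i=1}^nK_iL_i^\omega$ from \cref{condomegreg}(4) to see that a nonempty $\omega$-regular language must contain an ultimately periodic word of the form $uv^\omega$. The paper's proof is just a terser statement of exactly this argument.
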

\begin{proof}
    The case $L=\emptyset$ follows immediately from \cref{condomegreg}(4), and the general case by applying this case to the symmetric difference $K\triangle L$.
\end{proof}
\subsection{Continued fractions}
We recall some basic and well-known definitions and results about continued fractions. Except for the definition of Ostrowski representations of real numbers, all these results can be found in every basic textbook on continued fractions. We refer the reader to Rockett and Szüsz \cite{RS} for proofs of these results, simply because to the authors' knowledge it is the only book discussing Ostrowski representations of real numbers in detail. \newline

\noindent A \textbf{finite continued fraction expansion} $[a_0;a_1,\dots,a_k]$ is an expression of the form
\[
a_0 + \frac{1}{a_1 + \frac{1}{a_2+ \frac{1}{\ddots +  \frac{1}{a_k}}}}
\]
For a real number $\alpha$, we say $[a_0;a_1,\dots,a_k,\dots]$ is \textbf{the continued fraction expansion of $\alpha$} if $\alpha=\lim_{k\to \infty}[a_0;a_1,\dots,a_k]$ and $a_0\in \Z$, $a_i\in \N_{>0}$ for $i>0$. For the rest of this subsection, fix a positive irrational real number $\alpha$ and assume that $[a_0;a_1,\dots,a_k,\dots]$ is the continued fraction expansion of $\alpha$.

\begin{defn}\label{def:beta} Let $k\geq 1$. We define $p_k/q_k \in \Q$ to be the \textbf{$k$-th convergent of $\alpha$}, that is the quotient $p_k/q_k$ where $p_k\in \N$, $q_k\in \Z$, $\gcd(p_k,q_k)=1$ and 
\[
\frac{p_k}{q_k} = [a_0;a_1,\dots,a_k].
\]
The \textbf{$k$-th difference of $\alpha$} is defined as $\beta_k := q_k\alpha - p_k$. We define $\zeta_k \in \R$ to be the \textbf{$k$-th complete quotient of $\alpha$}, that is
$\zeta_k = [a_k;a_{k+1},a_{k+2},\dots]$.
\end{defn}

\noindent Maybe the most important fact about the convergents we will use, is that both their numerators and denominators satisfy the following recurrence relation.

\begin{fact}{\cite[Chapter I.1 p. 2]{RS}}\label{fact:recursive} Let $q_{-1} := 0$ and $p_{-1}:=1$. Then $q_{0} = 1$, $p_{0}=a_0$ and for $k\geq 0$,
\begin{align*}
q_{k+1} &= a_{k+1} \cdot q_k + q_{k-1}, \\
p_{k+1} &= a_{k+1} \cdot p_k + p_{k-1}. \\
\end{align*}
\end{fact}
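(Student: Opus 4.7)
The strategy is to introduce auxiliary sequences $(P_k)_{k\ge-1}$ and $(Q_k)_{k\ge-1}$ defined by the proposed recurrences with initial values $P_{-1}=1$, $P_0=a_0$, $Q_{-1}=0$, $Q_0=1$, and then to show that $p_k=P_k$ and $q_k=Q_k$ for every $k\ge 0$. Since $p_k/q_k$ is characterized as the \emph{reduced} form of $[a_0;a_1,\dots,a_k]$, it suffices to establish two claims: (i) $[a_0;a_1,\dots,a_k]=P_k/Q_k$, and (ii) $\gcd(P_k,Q_k)=1$. A positivity check using $a_i\ge 1$ for $i\ge 1$ will give $Q_k\ge 1$, matching the sign convention in Definition~\ref{def:beta}.

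For (i) I would actually prove the stronger statement by induction on $k\ge 1$: for every real $x>0$,
\[
[a_0;a_1,\dots,a_{k-1},x]=\frac{xP_{k-1}+P_{k-2}}{xQ_{k-1}+Q_{k-2}}.
\]
The base case $k=1$ is the direct calculation $a_0+1/x=(xa_0+1)/x=(xP_0+P_{-1})/(xQ_0+Q_{-1})$. For the inductive step one uses the elementary collapse
\[
[a_0;a_1,\dots,a_{k-1},a_k,x]=[a_0;a_1,\dots,a_{k-1},a_k+\tfrac{1}{x}]
\]
and applies the inductive hypothesis with $a_k+1/x$ substituted for $x$; the defining recurrences for $P_k,Q_k$ then let the $1/x$ terms telescope into the desired formula at level $k+1$. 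Specializing to the integer $x=a_k$ gives $[a_0;\dots,a_k]=P_k/Q_k$.

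For (ii) I would prove by induction the companion identity
\[
P_kQ_{k-1}-P_{k-1}Q_k=(-1)^{k+1}.
\]
The case $k=0$ reads $a_0\cdot 0-1\cdot 1=-1$. The inductive step is the direct computation
\[
P_{k+1}Q_k-P_kQ_{k+1}=(a_{k+1}P_k+P_{k-1})Q_k-P_k(a_{k+1}Q_k+Q_{k-1})=-(P_kQ_{k-1}-P_{k-1}Q_k).
\]
Any common divisor of $P_k$ and $Q_k$ divides the left-hand side, hence divides $\pm 1$, so $\gcd(P_k,Q_k)=1$. Combined with (i), this identifies $P_k=p_k$ and $Q_k=q_k$, yielding the recurrences.

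The only genuine obstacle is bookkeeping in the contraction step of (i): one must apply the inductive hypothesis to a sequence of length $k$ whose last entry $a_k+1/x$ is \emph{real}, not an integer, which is why I phrase the induction for arbitrary real $x>0$ rather than only for the original $a_i$. All remaining steps are routine algebra once this formulation is in place.
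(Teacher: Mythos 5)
Your argument is correct: the two inductions (the formula $[a_0;a_1,\dots,a_{k-1},x]=\frac{xP_{k-1}+P_{k-2}}{xQ_{k-1}+Q_{k-2}}$ for real $x>0$, and the determinant identity $P_kQ_{k-1}-P_{k-1}Q_k=(-1)^{k+1}$ giving coprimality) together identify $P_k/Q_k$ with the reduced convergent $p_k/q_k$ of \cref{def:beta}, and your insistence on allowing a real last entry $x$ is exactly the right way to make the contraction step legitimate. The paper itself offers no proof to compare against --- it cites this as a classical fact from Rockett--Sz\"usz \cite[Chapter I.1]{RS} --- and your proof is precisely the standard textbook argument found there, so there is nothing to add beyond the minor bookkeeping you already flag (positivity of $Q_k$, and the degenerate case $a_0=0$ for $k=0$, which your initial values settle).
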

\noindent We directly get that $\beta_{k+1} = a_{k+1} \beta_k + \beta_{k-1}$ for for $k\geq 0$. We will now introduce a numeration system due to Ostrowski \cite{Ost}.

\begin{fact}[{\cite[Chapter II.4  p. 24]{RS}}]\label{ostrowski} Let $N\in \N$. Then $N$ can be written uniquely as
\[
N = \sum_{k=0}^{n} b_{k+1} q_{k},
\]
where $n\in \N$ and the $b_k$'s are in $\N$ such that $b_1<a_1$ and for all $k\in \N_{\leq n}$, $b_k \leq a_{k}$ and, if $b_k = a_{k}$, then $b_{k-1} = 0$.
\end{fact}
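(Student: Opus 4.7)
The plan is to prove existence and uniqueness independently, both leaning on a single key inequality: under the stated digit conditions, truncating the representation after the top digit keeps the truncated sum strictly below the next convergent denominator. Concretely, I first want to establish the following auxiliary claim by induction on $n$:

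\textbf{Key lemma.} \emph{If $b_1,\dots,b_n\in\N$ satisfy $b_1<a_1$, $b_k\le a_k$ for all $1\le k\le n$, and $b_k=a_k\Rightarrow b_{k-1}=0$, then $\sum_{k=0}^{n-1}b_{k+1}q_k<q_n$.}

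The induction uses the recurrence $q_n=a_nq_{n-1}+q_{n-2}$ from \cref{fact:recursive}. If $b_n\le a_n-1$, the inductive hypothesis bounds the head by $q_{n-1}$, yielding the sum $<(a_n-1)q_{n-1}+q_{n-1}\le q_n$. If $b_n=a_n$, the adjacency condition forces $b_{n-1}=0$, so the sum equals $a_nq_{n-1}+\sum_{k=0}^{n-3}b_{k+1}q_k$; the inductive hypothesis applied to the (still valid) shorter sequence $b_1,\dots,b_{n-2}$ bounds the tail by $q_{n-2}$, giving again a total $<q_n$.

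\textbf{Existence} then follows by strong induction on $N$, via the greedy algorithm. For $N\ge 1$, since $q_k\to\infty$ there is a unique $n\ge 0$ with $q_n\le N<q_{n+1}$; set $b_{n+1}:=\lfloor N/q_n\rfloor$ and $N':=N-b_{n+1}q_n$, so $0\le N'<q_n$. The recurrence gives $b_{n+1}\le a_{n+1}$, and if equality holds then $N'<q_{n+1}-a_{n+1}q_n=q_{n-1}$. Applying the inductive hypothesis to $N'<q_n$ yields an Ostrowski expansion of $N'$, and since $N'<q_n$ its leading index is at most $n-1$, so appending $b_{n+1}$ as the top digit preserves all digit constraints. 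In the boundary case $b_{n+1}=a_{n+1}$, the sharper bound $N'<q_{n-1}$ pushes the leading index of $N'$ down to at most $n-2$, forcing the digit in position $q_{n-1}$ of the full expansion to be $0$, which is exactly the adjacency requirement.

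\textbf{Uniqueness} is immediate from the key lemma. Given two representations $N=\sum b_{k+1}q_k=\sum b'_{k+1}q_k$ (pad the shorter with zeros), let $n$ be the largest index at which they differ. Then
\[
(b_{n+1}-b'_{n+1})q_n=\sum_{k=0}^{n-1}(b'_{k+1}-b_{k+1})q_k;
\]
the left side is a nonzero integer multiple of $q_n$, hence has absolute value $\ge q_n$, while applying the key lemma to each of the two truncated sequences separately and using the triangle inequality forces the right side to have absolute value $<q_n$. This contradiction completes the argument.

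The only mildly delicate point I anticipate is the bookkeeping in the existence step: ensuring that the greedily-produced top digit $b_{n+1}$ together with the inductively-produced expansion of $N'$ fits together without silently violating the adjacency rule at the seam between positions $n$ and $n-1$. The sharper estimate $N'<q_{n-1}$ in the extremal case $b_{n+1}=a_{n+1}$ is precisely what avoids this, and is the single nontrivial observation in the whole proof.
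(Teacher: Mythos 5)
Your proof is correct: the key lemma (that a digit string obeying the Ostrowski constraints and truncated below index $n$ sums to less than $q_n$) follows by the induction you describe using the recurrence of \cref{fact:recursive}, the greedy step for existence is sound (in particular the sharper bound $N'<q_{n-1}$ when $b_{n+1}=a_{n+1}$ does exactly the work needed at the seam), and uniqueness follows since two truncated sums both lying in $[0,q_n)$ cannot differ by a nonzero multiple of $q_n$. Note that the paper itself gives no proof of this fact, citing Rockett and Sz\"usz instead; your argument is essentially the standard one found there (greedy algorithm plus the bounding lemma), so there is no substantive divergence to compare.
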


\noindent We call the representation of a natural number $N$ given by \cref{ostrowski} the normalized \textbf{$\alpha$-Ostrowski representation} of $N$. Of course, we will drop the reference to $\alpha$ whenever $\alpha$ is clear from the context. If $\phi$ is the golden ratio, the $\phi$-Ostrowski representation is better known as the \textbf{Zeckendorf representation}, see Zeckendorf \cite{Zeckendorf}. We will also need a similar representation of a real number.

\begin{fact}[{\cite[Chapter II.6  Theorem 1]{RS}\label{ostrowskireal}}]
Let $c \in \R$ be such that $-\frac{1}{\zeta_1} \leq c < 1-\frac{1}{\zeta_1}$. Then $c$ can be written uniquely in the form
\[
c = \sum_{k=0}^{\infty} b_{k+1} \beta_{k},
\]
where $b_k \in \N$, $0 \leq b_1 < a_1$, $0 \leq b_k \leq a_{k}$, for $k> 1$, and $b_k = 0$ if $b_{k+1} = a_{k+1}$, and $b_k < a_{k}$ for infinitely many odd $k$.
\end{fact}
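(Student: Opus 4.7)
The plan is to produce a bijection between admissible sequences $(b_k)_{k\ge 1}$ satisfying the stated constraints and points $c \in \rointer{-1/\zeta_1,\, 1-1/\zeta_1}$, via an iterative refinement of partitions of the target interval. The key facts about the differences $\beta_k = q_k\alpha - p_k$ that I would establish first are $\beta_{-1} = -1$, $\beta_0 = 1/\zeta_1$, the sign pattern $\operatorname{sign}(\beta_k) = (-1)^k$, the strict decrease $\abs{\beta_k} \downarrow 0$, and the telescoping identity
\[
a_{k+1}\beta_k = \beta_{k+1} - \beta_{k-1},
\]
which is immediate from \cref{fact:recursive}.

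I would then pin down the admissible range. Let $S(b) := \sum_{k=0}^\infty b_{k+1}\beta_k$. Telescoping shows that the sequence $b_{2j} = a_{2j},\ b_{2j-1} = 0$ yields $S(b) = -\beta_0 = -1/\zeta_1$, an attained minimum. On the other hand, the truncated sequences $b_1 = a_1 - 1$, $b_{2j} = 0$, $b_{2j+1} = a_{2j+1}$ for $1 \le j \le N$ (and zero thereafter) yield sums converging up to $1 - \beta_0 = 1 - 1/\zeta_1$ from below but never reaching it — the obstruction being precisely the infinitely-many-odd-$k$ condition, which forbids the limiting sequence. This locates the image of $S$ inside the target interval.

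Next, for an admissible prefix $(b_1, \ldots, b_n)$, define the \emph{cylinder} $C(b_1, \ldots, b_n)$ as the set of values $S(b')$ over admissible extensions $b'$ agreeing with the prefix in the first $n$ positions. Applying the extremal estimates from the previous step to tails shows that each cylinder is a (non-degenerate) interval with one endpoint closed and one open, that for fixed prefix $(b_1, \ldots, b_{n-1})$ the cylinders $C(b_1, \ldots, b_{n-1}, b_n)$ partition $C(b_1, \ldots, b_{n-1})$ as $b_n$ ranges over its admissible values, and that the cylinder lengths are controlled by $\abs{\beta_{n-1}}$ and hence tend to zero. Iterating yields, for every $c$ in the target interval, a unique nested sequence of cylinders whose intersection is $\{c\}$, and the associated sequence $(b_k)$ is the required admissible representation — producing existence and uniqueness in a single stroke.

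The main obstacle is establishing the partition property with perfect precision: the orientation (which endpoint is closed) of each cylinder depends on the parity of $n$ (because $\beta_n$ flips sign) and on whether $b_n = 0$ or $b_n > 0$ (because $b_n > 0$ forces the strict bound $b_{n+1} < a_{n+1}$, shrinking the admissible tail range). One must verify case-by-case that adjacent cylinders abut without overlap or gap, and show that the constraint ``$b_k = 0$ if $b_{k+1} = a_{k+1}$'' enters exactly where a naive counting would double-count a boundary. Finally, the infinitely-many-odd-$k$ condition plays the same role at the right endpoint of the top-level interval as the prohibition of $0.\overline{9}$ in decimal expansions, removing the last residual ambiguity and ensuring the bijection is with the half-open interval $\rointer{-1/\zeta_1,\, 1-1/\zeta_1}$ rather than its closure.
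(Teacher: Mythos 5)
The paper does not prove \cref{ostrowskireal}: it is quoted without proof from Rockett and Sz\"usz \cite[Chapter II.6, Theorem 1]{RS}, so your attempt has to be judged on its own terms rather than against an in-paper argument. Your skeleton is the natural nested-interval alternative to the textbook's digit-by-digit construction, and your preparatory computations are correct: $\beta_{-1}=-1$, $\beta_0=1/\zeta_1$, $\operatorname{sign}\beta_k=(-1)^k$, $\abs{\beta_k}\downarrow0$, and the telescoping consequence of \cref{fact:recursive} do show that the tail maxing the even-indexed digits attains $-1/\zeta_1$, while the tail maxing the odd-indexed digits is exactly what the infinitely-many-odd-$k$ condition forbids, so (using absolute convergence to upgrade the termwise bound to a strict inequality) the supremum $1-1/\zeta_1$ is approached but never attained.

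However, as written the argument has genuine gaps, concentrated precisely at the step you label the main obstacle. First, the assertion that each cylinder is a (half-open) interval is not a consequence of the extremal estimates: it \emph{is} the existence statement for tails, so the order of your claims is circular. A correct write-up must introduce candidate intervals $\rointer{\inf,\sup}$ whose endpoints are computed by telescoping, verify combinatorially that the children's candidates tile the parent's (this is where the constraint ``$b_k=0$ if $b_{k+1}=a_{k+1}$'' enters, through the dependence of the admissible tail on whether the last prefix digit is zero), and only afterwards identify cylinders with candidates via the shrinking nested intersection. Second, you never check that the digit sequence produced by the nested intersection is admissible: the pairwise constraints are local and survive the limit, but the infinitely-many-odd-$k$ condition need not, and excluding this failure requires observing that such a limit sequence would force $c$ to equal the excluded open endpoint of one of its own cylinders. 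Third, two guiding claims in your obstacle paragraph are incorrect and would derail the intended case analysis: since $\beta_{k-1}>0$ exactly when $k$ is odd, \emph{every} cylinder is closed at the end attained by maxing the even-indexed digits and open at the other end, independently of the parity of $n$ (what alternates with parity is only whether siblings move up or down as $b_n$ increases); and the infinitely-many-odd-$k$ condition is not only about the right endpoint of the top-level interval, but is what eliminates the second, ``all odd digits maxed'' representation of the common endpoint of every adjacent pair of cylinders, i.e.\ of a dense set of interior points, in analogy with $0.4\overline{9}=0.5$ rather than only $0.\overline{9}=1$.
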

\noindent We call the representation of $c$ given by \cref{ostrowskireal} the normalized \textbf{$\alpha$-Ostrowski representation} of $c$. 

\begin{fact}[{\cite[Fact 2.13]{H-Twosubgroups}}]
    \label{real_ostrowski_order}
    Let $b,c \in  [-\frac{1}{\zeta_1}, 1-\frac{1}{\zeta_1})$ be such that $b\neq c$, and  let $b_1,b_2,\dots,c_1,c_2,\ldots\in \N$ be 
    such that $ \sum_{k=0}^{\infty} b_{k+1} \beta_{k}$ and $\sum_{k=0}^{\infty} c_{k+1} \beta_{k}$ are the $\alpha$-Ostrowski representations of $b$ and $c$ respectively. Let $k \in \N_{>0}$ be minimal such that $b_k \neq c_k$. Then $b<c$ if and only if
     \begin{itemize}
    \item[(i)] $b_k < c_k$ if $k$ is odd;
    \item[(ii)] $b_k > c_k$ if $k$ is even.
    \end{itemize}
\end{fact}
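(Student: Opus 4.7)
The plan is to isolate the contribution of the first differing digit. Write
\[
c - b = d\beta_{k-1} + T,\qquad d:=c_k-b_k,\qquad T:=\sum_{j\geq k}(c_{j+1}-b_{j+1})\beta_j,
\]
and assume without loss of generality $c_k>b_k$, so $d\geq 1$. The classical fact that even-indexed convergents of $\alpha$ lie below $\alpha$ while odd-indexed ones lie above gives $\mathrm{sign}(\beta_k)=(-1)^k$, so $d\beta_{k-1}$ has sign $(-1)^{k-1}$ and magnitude at least $|\beta_{k-1}|$. It then suffices to show that $T$ does not swamp this leading term in the opposite direction: once this is done, $c>b$ for $k$ odd and $c<b$ for $k$ even, matching (i) and (ii).

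The engine for the bound is the identity $|\beta_{k-1}|=a_{k+1}|\beta_k|+|\beta_{k+1}|$, obtained from the recurrence $\beta_{k+1}=a_{k+1}\beta_k+\beta_{k-1}$ of \cref{fact:recursive} together with the alternating sign pattern; iterating yields the telescoping identity $|\beta_{k-1}|=\sum_{i\geq 0}a_{k+2i+1}|\beta_{k+2i}|$. I would use this to extremize the two partial tails $\sum_{j\geq k}c_{j+1}\beta_j$ and $\sum_{j\geq k}b_{j+1}\beta_j$ subject to the Ostrowski constraints $0\leq x_{j+1}\leq a_{j+1}$, $x_{j+1}=a_{j+1}\Rightarrow x_j=0$, and $x_j<a_j$ for infinitely many odd $j$. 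The identity lets one write the relevant sup and inf of such partial tails explicitly in terms of $|\beta_{k-1}|$ and $|\beta_k|$, with a small case split depending on whether $b_k=0$ (in which case the first tail digit $b_{k+1}$ may equal $a_{k+1}$) or $b_k\geq 1$ (forcing $b_{k+1}\leq a_{k+1}-1$).

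The crucial observation is an asymmetry: whichever supremum is relevant is realized only in the limit by the sequence that puts $x_j=a_j$ at every sufficiently large odd $j$, which violates the ``infinitely many odd'' clause of \cref{ostrowskireal} and is therefore not attained; the matching infimum, by contrast, puts $x_j=0$ at all odd $j$ and is attained. This asymmetry upgrades an a priori non-strict bound into the strict inequality one needs, giving $T>-|\beta_{k-1}|$ when $k$ is odd and $T<|\beta_{k-1}|$ when $k$ is even, and combining with the sign and magnitude of $d\beta_{k-1}$ concludes the proof. The main obstacle is the casework underlying the sup/inf computations — routine but notation-heavy — and verifying which extremizer fails the ``infinitely many odd'' clause; this is essentially the same mechanism that underlies the uniqueness statement in \cref{ostrowskireal}.
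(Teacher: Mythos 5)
The paper itself contains no proof of this statement --- it is quoted as \cite[Fact 2.13]{H-Twosubgroups} --- so there is no in-paper argument to compare with, and I can only judge your outline on its own terms. On those terms it is essentially correct: with $d=c_k-b_k\ge 1$, the sign pattern $\operatorname{sign}(\beta_j)=(-1)^j$, the telescoping identity $\abs{\beta_{k-1}}=\sum_{i\ge0}a_{k+2i+1}\abs{\beta_{k+2i}}$ (which follows from \cref{fact:recursive} as you say), and tail estimates yielding $T>-\abs{\beta_{k-1}}$ for $k$ odd and $T<\abs{\beta_{k-1}}$ for $k$ even do prove the fact; and the strictness really can be extracted from non-attainment of the supremum caused by the ``infinitely many odd'' clause of \cref{ostrowskireal} (though you could also get it for free from the hypothesis $b\ne c$, which already excludes $c-b=0$ once the non-strict bounds are in place).

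One step should be stated more carefully: where the adjacency rule is used. Under your normalization $c_k>b_k\ge0$ you automatically have $c_k\ge1$, hence $c_{k+1}\le a_{k+1}-1$, and it is this constraint on the \emph{$c$-tail} that improves its extremum in the adverse direction by $\abs{\beta_k}$, from $-\abs{\beta_{k-1}}$ to $-(\abs{\beta_{k-1}}-\abs{\beta_k})$ when $k$ is odd and from $\abs{\beta_{k-1}}$ to $\abs{\beta_{k-1}}-\abs{\beta_k}$ when $k$ is even. Without it the extremization only yields $T>-\abs{\beta_{k-1}}-\abs{\beta_k}$ (resp.\ $T<\abs{\beta_{k-1}}+\abs{\beta_k}$), which is inconclusive in the critical case $d=1$. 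By contrast, the case split you attach to $b_k$ is immaterial: for $k$ odd the supremum of the $b$-tail is governed only by the odd digit positions $\ge k+2$, so whether $b_{k+1}$ is allowed to equal $a_{k+1}$ never enters the bound $\sum_{j\ge k}b_{j+1}\beta_j<\abs{\beta_k}$, and symmetrically for $k$ even. With the adjacency constraint applied on the $c$-side, your casework closes exactly as predicted.
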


\subsection{Quadratic irrationals}
Now suppose that $\alpha$ is quadratic.
For $k\in \N$, set $\Gamma_k:=\begin{bmatrix}a_k&1\\1&0\end{bmatrix}$. Note that by \cref{fact:recursive}
\[
\prod_{k=0}^m\Gamma_k=\begin{bmatrix}p_{m}&p_{m-1}\\q_{m}&q_{m-1}\end{bmatrix}.
\]
 By Lagrange's theorem (see \cite[Theorem III.2]{RS}) the continued fraction expansion of $\alpha$ is ultimately periodic. Let $P(\alpha)\in \N$ be the minimal element of $\N$ such that there is $N(\alpha)\in \N$ with  $a_n=a_{n+P(\alpha)}$ for all natural numbers $n\ge N(\alpha)$. Set 
\[
\Gamma_{\alpha}:=\prod_{k=N(\alpha)}^{N(\alpha)+P(\alpha)-1}\Gamma_k.
\]
Observe that $\det \Gamma_{\alpha}\in \{-1,1\}$.
Let $\mathcal{O}_\alpha$ be the ring of integers of $\Q(\alpha)$.\newline

\noindent We now collect two surely well-known facts about $\Gamma_{\alpha}$. Since we did not find exact references, we include the proofs for the convenience of the reader.

\begin{fact}\label{charpoly}
The characteristic polynomial of $\Gamma_{\alpha}$ is the minimal polynomial of an element of $\mathcal{O}_{\alpha}^\times$, whose roots have distinct absolute values. The eigenvalue with greatest absolute value is greater than $1$.
\end{fact}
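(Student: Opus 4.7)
The plan is to exhibit an explicit eigenvector of $\Gamma_\alpha$ coming from the periodicity of the continued fraction, and then read off all the algebraic properties.

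First I would note that $\Gamma_\alpha\in M_2(\Z)$ with $\det\Gamma_\alpha\in\{-1,1\}$, so its characteristic polynomial $p(x)$ is a monic polynomial in $\Z[x]$ of degree $2$ with constant term $\pm 1$; in particular its two roots $\lambda,\lambda'$ are algebraic integers and satisfy $\lambda\lambda'=\pm 1$.

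Next, set $\zeta:=\zeta_{N(\alpha)}$. From $\zeta_k=a_k+1/\zeta_{k+1}$ one immediately verifies the one-step identity
\[
\Gamma_k\begin{bmatrix}\zeta_{k+1}\\1\end{bmatrix}=\zeta_{k+1}\begin{bmatrix}\zeta_k\\1\end{bmatrix}.
\]
Iterating this for $k=N(\alpha),\dots,N(\alpha)+P(\alpha)-1$ and invoking the periodicity $\zeta_{N(\alpha)+P(\alpha)}=\zeta$ yields
\[
\Gamma_\alpha\begin{bmatrix}\zeta\\1\end{bmatrix}=\lambda\begin{bmatrix}\zeta\\1\end{bmatrix},\qquad \lambda:=\prod_{k=N(\alpha)+1}^{N(\alpha)+P(\alpha)}\zeta_k.
\]
Since every $\zeta_k$ with $k\ge 1$ exceeds $1$, this already gives $\lambda>1$. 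Writing $\Gamma_\alpha=\begin{bmatrix}A&B\\C&D\end{bmatrix}$, the same eigenvalue equation gives $\lambda=C\zeta+D$, which lies in $\Q(\zeta)\subseteq\Q(\alpha)$ since every complete quotient is an integer M\"obius translate of $\alpha$.

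To finish, I would argue that $p$ is the minimal polynomial of $\lambda$ over $\Q$: if $C=0$ then the eigenvector relation forces $\zeta\in\Q$, contradicting the irrationality of $\alpha$; hence $C\ge 1$, so $\lambda=C\zeta+D$ is irrational and $p$ is irreducible. Combined with $\lambda\in\mathcal{O}_\alpha$ and $\lambda\lambda'=\pm 1$, this puts $\lambda\in\mathcal{O}_\alpha^\times$ with minimal polynomial $p$. Finally $|\lambda'|=1/\lambda<1<\lambda$ shows the two eigenvalues have distinct absolute values and that $\lambda$ is the larger one, already $>1$. I expect the only step that is not mere bookkeeping to be the clean derivation and iteration of the eigenvector identity; once the eigenvector $(\zeta,1)^T$ is in hand, all remaining assertions follow from elementary algebraic-number considerations.
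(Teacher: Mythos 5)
Your proof is correct, but it takes a genuinely different route from the paper. The paper first reduces to the purely periodic case (replacing $\alpha$ by a tail does not change $\Q(\alpha)$ or $\Gamma_\alpha$), writes the characteristic polynomial explicitly in terms of $p_{P(\alpha)-1},p_{P(\alpha)-2},q_{P(\alpha)-1},q_{P(\alpha)-2}$, and compares its discriminant with that of the quadratic polynomial satisfied by $\alpha$; equality of the (nonzero, non-square) discriminants gives two distinct roots lying in $\Q(\alpha)$, and positivity of the trace then yields distinct absolute values with the larger root positive. You instead exhibit the explicit eigenvector $(\zeta,1)^T$ with eigenvalue $\lambda=\prod_{k=N(\alpha)+1}^{N(\alpha)+P(\alpha)}\zeta_k>1$, which gives the inequality $\lambda>1$ for free and makes $\lambda\in\Q(\alpha)$ transparent, whereas the paper gets $\lambda>1$ only implicitly from $\abs{\det\Gamma_\alpha}=1$ together with distinctness of the absolute values; your route also avoids the reduction to the purely periodic case, at the price of having to argue irrationality of $\lambda$ separately. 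One small patch: your step ``if $C=0$ then the eigenvector relation forces $\zeta\in\Q$'' uses the first-row equation $(A-D)\zeta=-B$ and silently assumes $A\ne D$; in the subcase $A=D$ (so $B=0$ and $\Gamma_\alpha=\pm I$) you instead get $\lambda=D=\pm1$, contradicting $\lambda>1$, or more directly one can note that $\Gamma_\alpha$ is a product of the nonnegative matrices $\Gamma_k$ and its bottom-left entry is always at least $1$, so $C\ge1$ without any case split. With that detail filled in, all remaining assertions (irreducibility, $\lambda\in\mathcal O_\alpha^\times$ via $\lambda\lambda'=\pm1$, and $\abs{\lambda'}=1/\lambda<1<\lambda$) go through as you wrote them.
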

\begin{proof}
  Since replacing $\alpha$ by some $[0;a_k,a_{k+1},\dots]$ does neither change $\Q(\alpha)$ nor $\Gamma_{\alpha}$, we can assume that the continued fraction expansion is purely periodic. Therefore
\[
\Gamma_{\alpha} = \prod_{k=0}^{P(\alpha)-1}\Gamma_k=\begin{bmatrix}p_{P(\alpha)-1}&p_{P(\alpha)-2}\\q_{P(\alpha)-1}&q_{P(\alpha)-2}\end{bmatrix}.
\]
Thus the characteristic polynomial of $\Gamma_{\alpha}$ is 
\[
X^2 - (q_{P(\alpha)-2}+p_{P(\alpha)-1})X + (q_{P(\alpha)-1}p_{P(\alpha)-2}-q_{P(\alpha)-2}p_{P(\alpha)-1}).
\]
As in the proof of \cite[Ch. III, §1, Thm. 1]{RS} we conclude that $\alpha$ is a root of
\[
  q_{P(\alpha)-1}X^2+(q_{P(\alpha)-2}-p_{P(\alpha)-1})X-p_{P(\alpha)-2}=0.\]
The discriminant of this polynomial equals
\[(q_{P(\alpha)-2}-p_{P(\alpha)-1})^2+4q_{P(\alpha)-1}p_{P(\alpha)-2}=(q_{P(\alpha)-2}+p_{P(\alpha)-1})^2+4(q_{P(\alpha)-1}p_{P(\alpha)-2}-q_{P(\alpha)-2}p_{P(\alpha)-1}).
\]
Thus both polynomials have the same nonzero discriminant. Hence the characteristic polynomial of $\Gamma_{\alpha}$ has two distinct roots, both in $\Q(\alpha)$. Since $-(q_{P(\alpha)-2}+p_{P(\alpha)-1})<0$, these two roots have distinct absolute values and the greater one is positive.
\end{proof}

\noindent In the following, $\norm{\Gamma_\alpha}$ denotes the operator norm $\sup_{\norm v=1}\norm{\Gamma_\alpha v}$. Note that it equals the largest absolute value of an eigenvalue of $\Gamma_\alpha$. 
\begin{fact}\label{shifting}
Let $k \in \N$ be such that $k<P(\alpha)$. Then there are $m\in\N$ and $C,D,E\in\Q(\alpha)^\times$ such that for all $n\in\N_{\ge m}$
\begin{align*}
q_{nP(\alpha)+k} &= C\norm{\Gamma_{\alpha}}^n+D(\det \Gamma_{\alpha}\norm{\Gamma_{\alpha}})^{-n},\\
\beta_{nP(\alpha)+k} &= E(\det\Gamma_\alpha\norm{\Gamma_\alpha})^{-n}.
\end{align*}
\end{fact}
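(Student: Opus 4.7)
The plan is to exploit the ultimate periodicity of the continued fraction expansion of $\alpha$ to write, for $n$ large, the entries of $\prod_{j=0}^{nP(\alpha)+k}\Gamma_j$ as $\Q(\alpha)$-linear combinations of powers of the eigenvalues $\lambda_1:=\norm{\Gamma_\alpha}$ and $\lambda_2:=(\det\Gamma_\alpha\norm{\Gamma_\alpha})^{-1}$ of $\Gamma_\alpha$ (using $\det\Gamma_\alpha\in\{\pm 1\}$, so that $\lambda_1\lambda_2=\det\Gamma_\alpha$), and then to identify the coefficients. Fix $n_0$ with $n_0P(\alpha)+k+1\ge N(\alpha)$. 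Periodicity of $(a_j)_{j\ge N(\alpha)}$ implies that
\[
G:=\prod_{j=n_0P(\alpha)+k+1}^{(n_0+1)P(\alpha)+k}\Gamma_j
\]
is independent of $n_0$; being a cyclic rotation of the product defining $\Gamma_\alpha$, it is conjugate to $\Gamma_\alpha$ and therefore shares its characteristic polynomial. For every $n\ge n_0$,
\[
\prod_{j=0}^{nP(\alpha)+k}\Gamma_j=\Biggl(\prod_{j=0}^{n_0P(\alpha)+k}\Gamma_j\Biggr)G^{\,n-n_0}.
\]

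By \cref{charpoly} this characteristic polynomial has two distinct roots $\lambda_1>1>\abs{\lambda_2}$, both lying in $\Q(\alpha)$ (the proof of \cref{charpoly} shows it has the same nonzero discriminant as the minimal polynomial of $\alpha$, so both polynomials split in the same quadratic extension). Diagonalizing $G=T\operatorname{diag}(\lambda_1,\lambda_2)T^{-1}$ over $\Q(\alpha)$ and reading off the $(2,1)$ and $(1,1)$ entries of the displayed matrix, we obtain constants $A,B,A',B'\in\Q(\alpha)$, independent of $n\ge n_0$, such that
\[
q_{nP(\alpha)+k}=A\lambda_1^n+B\lambda_2^n,\qquad p_{nP(\alpha)+k}=A'\lambda_1^n+B'\lambda_2^n.
\]

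From the standard estimate $\abs{\beta_m}<1/q_{m+1}$ we have $\beta_m\to 0$; substituting into $\beta_{nP(\alpha)+k}=q_{nP(\alpha)+k}\alpha-p_{nP(\alpha)+k}=(A\alpha-A')\lambda_1^n+(B\alpha-B')\lambda_2^n$ forces the dominant coefficient to vanish, giving $A'=A\alpha$ and $\beta_{nP(\alpha)+k}=E\lambda_2^n$ with $E:=B\alpha-B'$. Setting $C:=A$, $D:=B$, and $m:=n_0$ produces the claimed identities. Trivially $C\ne 0$, since otherwise $q_{nP(\alpha)+k}=B\lambda_2^n\to 0$, contradicting $q_m\to\infty$.

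The main obstacle will be verifying $D\ne 0$ and $E\ne 0$, which I handle via Galois conjugation. Let $\sigma$ be the nontrivial $\Q$-automorphism of $\Q(\alpha)$; since $\lambda_1,\lambda_2$ are conjugate roots of an irreducible polynomial over $\Q$, $\sigma$ swaps them. Applying $\sigma$ to the integer $q_{nP(\alpha)+k}=A\lambda_1^n+B\lambda_2^n$ and using that $(\lambda_1^n)_n$ and $(\lambda_2^n)_n$ are linearly independent over $\Q(\alpha)$ (a $2\times 2$ Vandermonde computation using $\lambda_1\ne\lambda_2$), I conclude $B=\sigma(A)$, so $D=B\ne 0$. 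The same argument applied to $p_{nP(\alpha)+k}\in\Z$ yields $B'=\sigma(A')=\sigma(A\alpha)=B\sigma(\alpha)$, whence $E=B(\alpha-\sigma(\alpha))$, which is nonzero because $\alpha\notin\Q$ forces $\alpha\ne\sigma(\alpha)$.
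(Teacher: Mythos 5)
Your argument is correct and follows essentially the same route as the paper's proof: both express the subsequences $q_{nP(\alpha)+k}$ and $\beta_{nP(\alpha)+k}$ as $\Q(\alpha)$-combinations of powers of the two eigenvalues $\norm{\Gamma_\alpha}$ and $\det\Gamma_\alpha\norm{\Gamma_\alpha}^{-1}$ of the period matrix (you via right-multiplication by the rotated period product $G$, the paper via an eigenvector decomposition for $\Gamma_\alpha$), and both get $C\ne 0$ from $q_m\to\infty$ and kill the dominant term of $\beta$ using $\beta_m\to 0$. The only real divergence is the nonvanishing of $D$ and $E$: the paper deduces $D\ne0$ from $q_{nP(\alpha)+k}\in\Q$ together with irrationality of $\norm{\Gamma_\alpha}$ (with $E\ne0$ immediate from $\beta_m\ne0$), whereas your Galois-conjugation argument is equally valid and additionally pins down $D=\sigma(C)$ and $E=\sigma(C)(\alpha-\sigma(\alpha))$.
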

\begin{proof}
Let $M\in \N$ be such that $a_n = a_{n + P(\alpha)}$ for all $n\in \N_{\ge M}$. Increasing $M$, we can assume that $M=mP(\alpha)+k$ for some $m\in \N$. Let $v_1,v_2\in \Q(\alpha)^2$ be eigenvectors of $\Gamma_{\alpha}$, corresponding to the eigenvalues $\lambda_1=\norm{\Gamma_{\alpha}} ,\lambda_2=\det \Gamma_{\alpha}\norm{ \Gamma_{\alpha}}^{-1}$, such that
\[
\begin{bmatrix}
p_{M}\\
q_{M}
\end{bmatrix} = v_1 + v_2.
\]
Then 
\[
\begin{bmatrix}
p_{M+nP(\alpha)}\\
q_{M+nP(\alpha)}
\end{bmatrix} = \Gamma_{\alpha}^n 
\begin{bmatrix}
p_{M}\\
q_{M}
\end{bmatrix}=\lambda_1^nv_1+\lambda_2^nv_2. 
\]
Note that for $n\ge m$
\[
\begin{bmatrix}
p_{nP(\alpha)+k}\\
q_{nP(\alpha)+k}
\end{bmatrix} = \Gamma_{\alpha}^{-m}
\begin{bmatrix}
p_{M+nP(\alpha)}\\
q_{M+nP(\alpha)}
\end{bmatrix} = \lambda_1^n\Big(\Gamma_{\alpha}^{-m}v_1\Big) +\lambda_2^n\Big(\Gamma_{\alpha}^{-m}v_2\Big).
\]
Let $C$ be the second coordinate of $\Gamma_{\alpha}^{-m}v_1$ and let $D$ be the second coordinate of $\Gamma_{\alpha}^{-m}v_2$. Since $q_{nP(\alpha)+k}\to\infty$ as $n\to \infty$, we have that $C\ne0$. Hence $\{C\lambda_1^n\ : \ n\in \N\}\nsubseteq\Q$ by irrationality of $\lambda_1$. Since $q_{nP(\alpha)+k}\in\Q$, we conclude $D\ne0$.\\
The proof for $\beta_{nP(\alpha)+k}$ is analogous, noting that $v_1=0$ in this case since $\abs{\beta_{nP(\alpha)+k}}\to 0$ as $n\to\infty$.
\end{proof}

\noindent We collect one final fact about quadratic numbers. It shows that the assumptions on $\alpha$ and $\beta$ in Theorem A imply the assumptions in Theorem B.

\begin{fact}\label{quadirrmulind}
  Let $\alpha,\beta \in \R_{>0}$ be quadratic irrationals with $\Q(\alpha)\ne\Q(\beta)$. Then $\alpha$ and $\beta$ are multiplicatively independent.
\end{fact}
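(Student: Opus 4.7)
I would argue by contradiction, supposing $\alpha^m=\beta^n$ for some $(m,n)\in\Z^2\setminus\{(0,0)\}$. If either exponent is zero, the other gives $\alpha=1$ or $\beta=1$ (the only positive real root of unity is $1$), contradicting irrationality; so both are nonzero.

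Next, I would reduce to the case where at least one exponent is odd. Writing $m=2^km_1$ and $n=2^kn_1$ with $k:=\min\{v_2(m),v_2(n)\}$ (where $v_2$ denotes the $2$-adic valuation), the integers $m_1,n_1\ne 0$ are not both even, and uniqueness of positive real $2^k$-th roots turns $\alpha^m=\beta^n$ into $\alpha^{m_1}=\beta^{n_1}$ in $\R_{>0}$. By symmetry I may assume $m_1$ is odd.

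A Galois-theoretic argument then closes the proof. Since $\Q(\alpha)$ and $\Q(\beta)$ are distinct quadratic subfields of $K:=\Q(\alpha,\beta)$, they intersect in $\Q$ and $K/\Q$ is biquadratic with group $(\Z/2\Z)^2$. Let $\alpha'\in\Q(\alpha)$ be the Galois conjugate of $\alpha$, which is real because real quadratic irrationals have positive discriminant, and let $\sigma\in\operatorname{Gal}(K/\Q)$ be the automorphism fixing $\Q(\beta)$ and sending $\alpha\mapsto\alpha'$. Applying $\sigma$ to $\alpha^{m_1}=\beta^{n_1}$ yields $(\alpha')^{m_1}=\alpha^{m_1}$; since $m_1$ is odd, $x\mapsto x^{m_1}$ is injective on $\R\setminus\{0\}$, forcing $\alpha=\alpha'$ and hence $\alpha\in\Q$, contradicting irrationality. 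The only mildly subtle ingredient is the preparatory $2$-adic descent, which rests on the uniqueness of positive real roots; otherwise the proof is a direct consequence of biquadratic Galois theory together with the fact that odd power maps are injective on $\R$.
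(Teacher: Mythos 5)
Your proof is correct, but it takes a different route from the paper's. The paper argues computationally: assuming $\alpha^m=\beta^n$ with $m,n$ coprime, it writes $\alpha=u_1+v_1\sqrt z$ and expands $\alpha^m$ binomially; since all terms contributing to the coefficient of $\sqrt z$ share a sign, the membership $\alpha^m\in\Q(\alpha)\cap\Q(\beta)=\Q$ forces $u_1=0$, hence $m$ even, and symmetrically $n$ even, contradicting coprimality. You replace that expansion by a Galois conjugation: after the $2$-adic descent to an odd exponent, applying the automorphism of $\Q(\alpha,\beta)$ fixing $\Q(\beta)$ and conjugating $\alpha$, together with injectivity of odd power maps on $\R\setminus\{0\}$, kills the relation outright. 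The two arguments share a skeleton (a parity reduction — coprimality there, $2$-adic valuation here — plus the fact that distinct quadratic fields meet in $\Q$), but your key step is structural where the paper's is an explicit sign computation; yours adapts more readily to higher-degree situations, while the paper's avoids Galois theory entirely. A small streamlining of your version: since $\alpha^{m_1}=\beta^{n_1}\in\Q(\alpha)\cap\Q(\beta)=\Q$, you do not need the biquadratic setup at all — conjugating within $\Q(\alpha)$ alone already yields $(\alpha')^{m_1}=\alpha^{m_1}$ and the same contradiction.
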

\begin{proof}
Towards a contradiction, suppose that there are $m,n\in \N$ such that $\alpha^m=\beta^n$. Without loss of generality, we can assume that $m$ and $n$ are coprime. For $k\in\N$ write $\alpha^k=u_k+v_k\sqrt z$ with $u_k,v_k,z\in\Q$ and $v_1,z\ne0$. Then $v_m$ is a nontrivial $\N$-linear combination of
  \[u_1^{m-1}v_1,u_1^{m-3}v_1^3,\dots,u_1^{m-1-2\floor{(m-1)/2}}v_1^{2\floor{(m-1)/2}+1},\]
  which all have the same sign. Since $\alpha^m\in\Q(\alpha)\cap\Q(\beta)=\Q$, we have $v_m=0$ and so $u_1=0$. Thus $m$ is even. Analogously we see that $n$ is even. Then $m,n$ are not coprime, contradicting our assumption.
\end{proof}

\subsection{Non-definability and a theorem of Bès and Choffrut}
We now collect some definability and non-definability results in expansions of the real ordered additive group. Let $K\subseteq \R$ be a subfield, and consider the signature 
\[
\mathcal{L}_K:=\{<,+,1, (\lambda_k)_{k\in K}\},
\]
where $\lambda_k : \R \to \R$ maps $x$ to $kx$. We will consider the $\mathcal{L}_K$-structure $\Big(\R,{<},+,1,(x\mapsto \alpha x)_{\alpha \in K}\Big)$. It is well known that this structure has quantifier-elimination. It follows that every set definable in $(\R,{<},+,1,(x\mapsto \alpha x)_{\alpha \in K})$ is a finite union of open and closed $K$-polyhedra. When $K=\Q$, it is clear that $(\R,{<},+,1,(x\mapsto \alpha x)_{\alpha \in \Q})$ and $(\R,{<},+,1)$ define the same sets. The following fact shows that adding $\Z$ to $(\R,{<},+,1)$, does not add new bounded definable sets.

\begin{fact}{{\cite[Lemma 2.4]{BesC1}}}\label{bc1fact}
Let $X\subseteq \R^n$ be definable in $(\R,<,+,\Z)$, let $a_1,\dots,a_n,b_1,\dots,b_n \in \Q$. Then
\[
X \cap \Big( [a_1,b_1] \times \dots \times [a_n,b_n] \Big)
\]
is definable in $(\R,<,+,1)$.
\end{fact}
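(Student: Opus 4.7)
The plan is to invoke the Skolem-Weispfenning-Miller quantifier elimination cited in the introduction: after expanding $(\R,<,+,\Z)$ by the floor function and by scalar multiplication by each rational, the resulting structure admits quantifier elimination. Since $X$ is assumed definable without parameters, it is cut out by a parameter-free quantifier-free formula $\phi(\bar x)$ in this extended signature, which I can then manipulate syntactically.

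Next I would put $\phi$ into a convenient normal form. Enumerating the floor subterms of $\phi$ in nesting order as $T_1,\dots,T_N$, one has
\[
T_k(\bar x)=\Bigl\lfloor \sum_{i=1}^n q_{ki}x_i+\sum_{l<k}r_{kl}T_l(\bar x)+c_k\Bigr\rfloor
\]
for suitable $q_{ki},r_{kl},c_k\in\Q$, and $\phi$ then becomes a Boolean combination of atomic formulas of the shape $\sum_i p_ix_i+\sum_j s_jT_j(\bar x)\bowtie d$ with $p_i,s_j,d\in\Q$ and $\bowtie\in\{<,=\}$. Restricting to the bounded rational box $B:=[a_1,b_1]\times\cdots\times[a_n,b_n]$, an easy induction on $k$ shows that for $\bar x\in B$ the value $T_k(\bar x)$ lies in a bounded subset of $\Z$, since the argument of $T_k$ is a $\Q$-linear combination of quantities already bounded on $B$ by the induction hypothesis.

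The final step is to partition $B$ according to the finitely many tuples $(t_1,\dots,t_N)\in\Z^N$ actually taken by $(T_1,\dots,T_N)$. On each piece the conditions $T_k(\bar x)=t_k$ become the $\Q$-linear double inequalities $t_k\le\sum_iq_{ki}x_i+\sum_{l<k}r_{kl}t_l+c_k<t_k+1$, and each atomic formula of $\phi$ becomes a $\Q$-linear inequality with a $\Q$ constant on the right; all such inequalities, together with the defining inequalities of $B$, are expressible without parameters in $(\R,<,+,1)$. Taking the finite union over $\bar t$ yields that $X\cap B$ is definable in $(\R,<,+,1)$. The main bookkeeping challenge I foresee is handling nested floors, but once one observes that a floor of a bounded $\Q$-linear expression takes only finitely many integer values, the whole argument collapses to a finite case analysis and rational linear arithmetic.
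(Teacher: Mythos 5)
Your argument is correct, but note that the paper itself gives no proof of this statement at all: it is quoted verbatim as Lemma 2.4 of B\`es--Choffrut \cite{BesC1}, so you are in effect reproving the cited lemma rather than following anything in the paper. Your route --- pass to the Skolem--Weispfenning--Miller quantifier-free normal form in the signature with floor and the maps $x\mapsto qx$, observe that on a rational box every floor subterm (ordered by nesting depth) is bounded and hence takes only finitely many integer values, and then case-split on the value tuple so that everything collapses to parameter-free rational linear constraints expressible in $(\R,<,+,1)$ --- is sound and self-contained; each cell of the partition is cut out by conditions of the form $t_k\le A_k(\bar x,\bar t)<t_k+1$ together with the linearized atoms and the box inequalities, and the finite union over tuples is exactly $X\cap B$. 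Two small points worth making explicit if you write this up: the quantifier elimination is usually stated with the constants $0,1$ in the signature (harmless, since $1$ is parameter-free definable in $(\R,<,+,\Z)$ as the least positive element of $\Z$, and rational constants and rational scalar maps are parameter-free definable in $(\R,<,+,1)$); and if the quantifier-free normal form still contains atoms of the form $\Z(t)$, these are equivalent to $\lfloor t\rfloor=t$ and are absorbed by the same finite case analysis. With those remarks your proof is a complete, elementary substitute for the external citation.
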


\noindent Fix $d\in \N$ for the remainder of this subsection. Let $\mathcal{L}_K(P)$ be the signature $\mathcal{L}_K$ together with an $d$-ary predicate symbol $P$. This allows us to consider for $X\subseteq \R^d$ the $\mathcal{L}_K(P)$-structure $(\R,{<},+,1,(x\mapsto \alpha x)_{\alpha \in K},X)$.\newline

\noindent The following theorem was shown for $K=\Q$ by Bès and Choffrut \cite{BesChoffrut2022}. An inspection of their proof shows that the statement holds for arbitrary subfields of $\R$.
\begin{fact}{{\cite[Theorem 5.8]{BesChoffrut2022}}}\label{selfdef}
    Let $K\subseteq\R$ be a subfield. Then there exists an $\mathcal{L}_K(P)$-sentence $\Phi_n$ such that for every $X\subseteq\R^n$ for which every nonempty set definable in $(\R,{<},+,1,(x\mapsto \alpha x)_{\alpha \in K},X)$ contains a point with components in $K$, the following are equivalent:
    \begin{enumerate}
        \item $(\R,{<},+,1,(x\mapsto \alpha x)_{\alpha \in K},X)\models \Phi_n$,
        \item  $X$ is $(\R,{<},+,1,(x\mapsto \alpha x)_{\alpha \in K})$-definable.
    \end{enumerate}
\end{fact}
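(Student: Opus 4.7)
The plan is to inspect the proof of \cite[Theorem 5.8]{BesChoffrut2022} given for $K=\Q$ and verify that each step generalises verbatim when $\Q$ is replaced by an arbitrary subfield $K\subseteq\R$. The first ingredient is that quantifier elimination for $(\R,{<},+,1,(\lambda_k)_{k\in K})$ gives that the definable subsets of $\R^n$ in this reduct are precisely the finite unions of $K$-polyhedra, i.e. finite intersections of sets of the form $\set{x\in\R^n:\sum_i k_ix_i\star c}$ with $k_i,c\in K$ and $\star\in\set{<,\le,=}$. The sentence $\Phi_n$ will capture this polyhedral shape of $X$ from inside the expanded structure $(\R,{<},+,1,(\lambda_k)_{k\in K},X)$: at a high level, it asserts that $X$ admits a well-defined local $K$-polyhedral tangent cone at every point of $\R^n$ and that only finitely many such cones occur, from which $X$ can be reconstructed as a finite union of $K$-polyhedral pieces.

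The implication "$\mathcal{L}_K$-definable $\Rightarrow \Phi_n$" follows by direct syntactic inspection once quantifier elimination is in hand, since the polyhedral structure is immediately visible from the defining formula. The converse direction is substantive: Bès and Choffrut partition $\R^n$ according to the tangent-cone stratification supplied by $\Phi_n$, select within each nonempty stratum a base point with rational coordinates, and use these base points and their associated cones to assemble an $\mathcal{L}_\Q$-formula defining $X$. The only place in this argument where $K=\Q$ enters essentially is the selection of rational witnesses inside nonempty $(\R,{<},+,1,X)$-definable sets; in the generalisation, the hypothesis of the theorem supplies precisely such witnesses with components in $K$. Every other step, including boolean manipulations of $\mathcal{L}_K(P)$-formulas, the combinatorial analysis of cells, and the enumeration of polyhedral cone types, is entirely independent of the specific choice of subfield.

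The main obstacle is therefore bookkeeping: one has to distinguish throughout \cite{BesChoffrut2022} between uses of $\Q$ that refer to scalars appearing in formulas, which in our setting are absorbed by the richer language $\mathcal{L}_K$, and uses of $\Q$ that refer to rational witnesses inside $X$-definable sets, which are furnished by the hypothesis on $X$. After making this distinction, the Bès-Choffrut argument transfers without modification and produces the desired $\mathcal{L}_K(P)$-sentence $\Phi_n$. The only conceptual subtlety worth flagging is that $K$ need not be dense in $\R$, so it is genuinely necessary that the witness hypothesis is imposed on every nonempty set definable in the $X$-expanded structure (not merely on $X$ itself), since both the tangent-cone stratification and the subsequent polyhedral reconstruction may require witnesses inside derived definable sets.
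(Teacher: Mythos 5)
Your proposal matches the paper's treatment: the paper gives no proof of this fact beyond citing \cite[Theorem 5.8]{BesChoffrut2022} for $K=\Q$ and remarking that an inspection of the Bès--Choffrut argument shows it goes through for an arbitrary subfield, which is exactly the inspection you carry out, correctly isolating the rational-witness step as the only place where the hypothesis on $X$-definable sets is needed. No gap; your write-up simply supplies more detail than the paper chose to record.
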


\noindent Following the argument in \cite{BBB10} we want to reduce Theorem A to the special case that $X\subseteq \Z^d$ or $X \subseteq [0,1]^d$. 
In \cite[Section 4.1]{BBB10} the corresponding argument uses special properties of $k$- and $\ell$-recognizable sets. Here we will use the following general proposition.

\begin{prop}\label{definbounded}
  Let $X\subseteq\R^d$ be not definable in $(\R,{<},+,\Z)$. Then one of the following holds:
  \begin{enumerate}
  \item $(\R,{<},+,\Z,X)$ defines a subset of $[0,1]^d$ not definable in $(\R,{<},+,\Z)$.
  \item $(\R,{<},+,\Z,X)$ defines a subset of $\N^d$ not definable in $(\Z,{<},+)$.
  \end{enumerate}
  \end{prop}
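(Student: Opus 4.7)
\noindent I argue the contrapositive: suppose neither (1) nor (2) holds, so that every $(\R,{<},+,\Z,X)$-definable subset of $[0,1]^d$ is already $(\R,{<},+,\Z)$-definable, and every $(\R,{<},+,\Z,X)$-definable subset of $\N^d$ is $(\Z,{<},+)$-definable. By \cref{bc1fact} the first yields the stronger statement that any bounded $(\R,{<},+,\Z,X)$-definable set is $(\R,{<},+,1)$-definable, i.e.\ a finite union of rational polyhedra. Splitting $\Z^d$ into its $2^d$ sign orthants (each in definable bijection with $\N^d$) upgrades the second to: every $(\R,{<},+,\Z,X)$-definable subset of $\Z^d$ is Presburger. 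The goal is then to prove $X$ itself is $(\R,{<},+,\Z)$-definable, contradicting the hypothesis.

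Use the integer/fractional-part bijection $\Phi(x)=(\lfloor x\rfloor,\{x\})\colon\R^d\to\Z^d\times[0,1)^d$, which is $(\R,{<},+,\Z)$-definable, to transport $X$ to $Y:=\Phi(X)$. By quantifier elimination for $(\R,{<},+,\Z)$ in the extended signature with floor and rational scalar multiplication, a subset of $\Z^d\times[0,1)^d$ is $(\R,{<},+,\Z)$-definable iff it is a finite union $\bigcup_{j=1}^N S_j\times D_j$ with each $S_j\subseteq\Z^d$ Presburger and each $D_j\subseteq[0,1)^d$ a finite union of rational polyhedra. It thus suffices to show $Y$ has this shape. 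For each $n\in\Z^d$ the fibre $Y_n:=\{r:(n,r)\in Y\}$ is $(\R,{<},+,\Z,X)$-definable without parameters, because the particular tuple $n$ is parameter-free definable in $(\R,{<},+,\Z)$ as an iterated successor of $0$; so by the first paragraph $Y_n$ is a finite union of rational polyhedra. Symmetrically, for each rational polyhedron $D\subseteq[0,1)^d$ cut out by a parameter-free $(\R,{<},+,1)$-formula $\phi_D$, the set $S_D:=\{n\in\Z^d:Y_n=D\}$ is $(\R,{<},+,\Z,X)$-definable and hence Presburger.

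What remains is the finiteness of $\{Y_n:n\in\Z^d\}$; once this is established, $Y=\bigcup_{D}S_D\times D$ is a finite union of the required form, exhibiting $X=\bigsqcup_{D}(S_D+D)$ as $(\R,{<},+,\Z)$-definable and yielding the contradiction. My plan for finiteness is to apply \cref{selfdef} with $K=\Q$ to a bounded auxiliary set built from $Y$ by a $(\R,{<},+,\Z)$-definable encoding that captures its slice structure across $\Z^d$ (for example, intersecting $Y$ with a growing sequence of definable boxes and extracting a limit object, or combining $Y$ with a definable section of $\Z^d$ into a single bounded predicate). The rational-density hypothesis of \cref{selfdef} is automatic on bounded $(\R,{<},+,\Z,X)$-definable sets by the first paragraph, so the sentence $\Phi_n$ must hold of the encoded set and thereby force a uniform finite $(\R,{<},+,1)$-description of the encoded slices, translating back to finitely many $Y_n$.

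The delicate point — and the expected main obstacle — is exactly this finiteness step. Individual tameness of each fibre $Y_n$ and each partition class $S_D$ is not enough: what must be ruled out is a scenario in which infinitely many distinct rational polyhedra occur as some $Y_n$, with the family $\{Y_n\}$ varying in a Presburger-controlled but unbounded way. The technical heart of the plan lies in crafting the auxiliary bounded set for \cref{selfdef} so that unbounded slice variation is forced to appear there as non-polyhedral complexity, directly contradicting the conclusion of the first paragraph; the main risk is that the natural encodings collapse too much information and fail to expose this variation.
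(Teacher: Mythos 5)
There is a genuine gap, and it is exactly the step you flag yourself: the finiteness of the family of fibres $\{Y_n\}_{n\in\Z^d}$ is never proved. Everything before that point (each fibre is a finite union of rational polyhedra; each set $S_D$ is Presburger; a set of the form $\bigcup_j S_j\times D_j$ pulls back to an $(\R,{<},+,\Z)$-definable set) is fine, but without uniform finiteness the recombination $Y=\bigcup_D S_D\times D$ is an infinite union and yields nothing. Your proposed remedy --- feeding some bounded ``limit object'' or encoded predicate into \cref{selfdef} --- is not an argument: \cref{selfdef} only tests whether a single given bounded set is definable in $(\R,{<},+,1,(x\mapsto\alpha x)_{\alpha\in\Q})$; it has no mechanism for converting ``every individual slice is polyhedral'' into ``only finitely many distinct slices occur,'' and you give no concrete encoding for which unbounded slice variation provably produces a non-polyhedral bounded definable set. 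Since you explicitly acknowledge that the natural encodings may collapse the relevant information, the heart of the proposition remains open in your write-up.

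For comparison, the paper closes precisely this gap with \cref{verticesdef} rather than \cref{selfdef}: there is a single formula $\Theta_d$ which, in each translated unit box, defines a finite set containing the vertices of the polyhedra making up that slice of $X$. Taking the union over all boxes gives one countable set $W$ definable in $(\R,{<},+,\Z,X)$, and the set $B$ of fractional parts of points of $W$ is a bounded definable subset of $[0,1]^d$; by the failure of (1) it is definable in $(\R,{<},+,\Z)$ and hence a finite subset of $\Q^d$. The finite set $C$ of barycentric averages of up to $d+1$ points of $B$ then meets the interior of every simplex of every slice, so two slices are equal iff they have the same intersection with $C$. This shows the ``same slice'' equivalence relation on $\N^d$ has finitely many classes and is definable from $X$; since $X$ itself is not definable, some class is not Presburger, which is (2). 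If you want to salvage your plan, this uniform vertex-extraction (and the observation that the fractional parts of all vertices form a \emph{single} bounded definable set to which the failure of (1) applies) is the missing ingredient you would need to supply.
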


\noindent To prove \cref{definbounded}, we need two elementary results about the expressive strength of $\mathcal{L}_K(P)$-formulas. We expect both (and maybe even \cref{definbounded} itself) to be known, but include proofs for the convenience of the reader.

\begin{fact}\label{lemsubspdef}
Let $k\in \{0,\dots,d\}$. Then there exists an $\mathcal{L}_\Q(P)$-sentence $\Psi_{k,d}$ such that for every $X\subseteq\R^d$, the following are equivalent:
    \begin{enumerate}
        \item $(\R,{<},+,1,(x\mapsto \alpha x)_{\alpha \in \Q},X)\models \Psi_{k,d}$,
        \item $X$ is a $k$-dimensional affine $\R$-subspace of $\R^d$.
    \end{enumerate}
    Similarly, there exists an $\mathcal{L}_\Q(P)$-formula $\Psi'_{k,d}(x)$ such that for every $X\subseteq\R^d$ and $a\in\R^d$, the following are equivalent:
    \begin{enumerate}
        \item $(\R,{<},+,1,(x\mapsto \alpha x)_{\alpha \in \Q},X)\models \Psi_{k,d}(a)$,
        \item There are a neighborhood $U$ of $a$ and a $k$-dimensional affine $\R$-subspace $V$ of $\R^d$ such that $X\cap U=V\cap U$.
    \end{enumerate}
\end{fact}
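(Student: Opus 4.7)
The plan is to express, in finitely many atomic $\mathcal{L}_\Q(P)$-conditions, what it means for the set defined by $P$ to be a $k$-dimensional affine $\R$-subspace of $\R^d$. For $\Psi_{k,d}$ I would take the conjunction of: (i) $\exists x\,P(x)$; (ii) closure under midpoints, $\forall x,y\,(P(x)\wedge P(y)\to P(\tfrac{x+y}{2}))$, using $\lambda_{1/2}$ coordinatewise; (iii) topological closedness, $\forall a\,(\neg P(a)\to\exists\varepsilon>0\,\forall y\,(\bigwedge_{i=1}^d -\varepsilon<y_i-a_i<\varepsilon\to\neg P(y)))$; and (iv) the finite disjunction over all $I\subseteq\{1,\dots,d\}$ with $\abs I=k$ of $\forall(t_i)_{i\in I}\,\exists!y\,(P(y)\wedge\bigwedge_{i\in I}y_i=t_i)$, expressing that the projection of $P$ onto some $k$-subset of coordinates is a bijection onto $\R^k$.

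For the nontrivial direction, assume (i)--(iv) and let $I$ witness (iv); then $P$ is the graph of a function $g\colon\R^k\to\R^{d-k}$. From (ii) the function $g$ satisfies Jensen's equation, and a standard manipulation shows $g-g(0)$ is additive, hence $\Q$-linear. From (iii) the graph of $g$ is closed in $\R^d$. But a $\Q$-linear map $h\colon\R^k\to\R^{d-k}$ whose graph is closed must be $\R$-linear: for any $u\in\R^k$ and $\lambda\in\R$, picking $q_n\in\Q$ with $q_n\to\lambda$ gives $(q_nu,h(q_nu))=(q_nu,q_nh(u))\to(\lambda u,\lambda h(u))$, which lies in the graph by closedness, so $h(\lambda u)=\lambda h(u)$. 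Hence $g$ is $\R$-affine and $P$ is a $k$-dimensional affine subspace. The converse is routine: any $k$-dim affine subspace is nonempty, midpoint-closed, topologically closed, and projects bijectively onto some $k$-subset of coordinates via a choice of $k$ pivot columns in a basis matrix.

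For $\Psi'_{k,d}(x)$ I take the local analogue: $P(x)$ together with the disjunction over $I$ of size $k$ of the statement that there exist $\varepsilon,\delta>0$ with (a) $\forall y,z\,(P(y)\wedge P(z)\wedge\bigwedge_{j=1}^d\abs{y_j-x_j}<\varepsilon\wedge\bigwedge_{j=1}^d\abs{z_j-x_j}<\varepsilon\to P(\tfrac{y+z}{2}))$ and (b) $\forall(t_i)_{i\in I}\,(\bigwedge_{i\in I}\abs{t_i-x_i}<\delta\to\exists!y\,(P(y)\wedge\bigwedge_{j=1}^d\abs{y_j-x_j}<\varepsilon\wedge\bigwedge_{i\in I}y_i=t_i))$. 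Given this, (b) yields a function $f$ from the $\delta$-box around $(x_i)_{i\in I}$ into the $\varepsilon$-box around $x$ whose graph $Y$ equals $P\cap U$, where $U$ is the open neighborhood of $x$ carved out by $\abs{y_j-x_j}<\varepsilon$ for all $j$ and $\abs{y_i-x_i}<\delta$ for $i\in I$; convexity of $U$ and of the $\varepsilon$-box, combined with (a), gives Jensen's equation for $f$. Translating so $f(0)=0$ and extending via $\tilde f(s):=2^mf(s/2^m)$ for $m$ large enough that $s/2^m$ lies in the $\delta$-box, one verifies that $\tilde f\colon\R^k\to\R^{d-k}$ is well-defined, $\Q$-linear, and bounded on the $\delta$-box. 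The classical lemma that a $\Q$-linear map $\R^k\to\R^{d-k}$ bounded on an open neighborhood of $0$ is $\R$-linear then shows $\tilde f$ is $\R$-linear, so $f$ is the restriction of an $\R$-affine map, whose graph is a $k$-dim affine $\R$-subspace $V$ satisfying $P\cap U=V\cap U$. The converse is immediate: given $P\cap U_0=V\cap U_0$, choose $I$ so that $V$ projects bijectively onto the $I$-coordinates and shrink $\varepsilon,\delta$ so that the box $U$ fits inside $U_0$.

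The main technical ingredient is the classical fact that a $\Q$-linear map $\R^k\to\R^{d-k}$ bounded on some neighborhood of the origin (or, in the global setting, having closed graph) must be $\R$-linear. Everything else is bookkeeping: encoding the coordinate subset as a finite disjunction, invoking convexity of open boxes so that midpoints stay inside them, and translating to reduce to the case $f(0)=0$.
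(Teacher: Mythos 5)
Your construction of $\Psi_{k,d}$ is correct, and it takes a mildly different route from the paper: you characterize a $k$-dimensional affine subspace as a nonempty, topologically closed, midpoint-stable set that is the graph of a function over some $k$-element set of coordinates, and you recover $\R$-linearity from $\Q$-linearity via the closed-graph argument; the paper instead characterizes subspaces as closed additive subgroups stable under $x\mapsto\tfrac12x$ and pins down the dimension by the existence (and non-existence) of tuples of vectors in echelon form. Both characterizations are expressible in $\mathcal L_\Q(P)$ and both proofs of the nontrivial direction are sound, so the first half of your proposal is fine.

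The local formula $\Psi'_{k,d}$, however, does not prove the statement as it is phrased. By taking $P(x)$ as a conjunct, your formula expresses the strictly stronger property that $X$ agrees near $a$ with a $k$-dimensional affine subspace \emph{passing through $a$}. Condition (2) only asks for a neighborhood $U$ of $a$ and some $k$-dimensional affine $V$ with $X\cap U=V\cap U$; it requires neither $a\in X$ nor $a\in V$. Concretely, for $d=1$, $k=0$, $X=\{1\}$ and $a=0$, condition (2) holds (take $U=(-\tfrac12,\tfrac12)$ and $V=\{1\}$, both intersections being empty), while your formula fails since $P(0)$ is false; the same happens at every point outside the closure of $X$ whenever $k<d$, and at points near which $X$ coincides with a piece of an affine subspace not containing $a$. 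Your converse direction ("the converse is immediate") silently uses $a\in V$ to guarantee that the $V$-point over parameters close to $a_I$ stays in the $\varepsilon$-box and that $P(x)$ holds, so it only establishes the equivalence at points of $X$. The repair is to stop anchoring the subspace at $a$: relativize the global characterization to a small box $U$ around $a$ (closedness of $X\cap U$ inside $U$, the algebraic closure conditions imposed only when all points involved lie in $U$, witnesses for the dimension conditions taken inside $U$, base point an arbitrary element of $X\cap U$ rather than $a$ itself, plus a disjunct for $X\cap U=\emptyset$ when $k<d$). This is essentially what the paper's proof of the second part does by weakening the subgroup condition to ``$a+b\in X$ whenever $a,b,a+b\in U$'' on a fixed box around the (translated) point.
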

\begin{proof}
Note that $X$ is an $\R$-subspace of $\R^n$ if and only if it is closed, an additive subgroup, and stable under multiplication by $\frac12$. Thus we can easily express whether $X$ is an affine subspace, by an $\mathcal{L}_\Q(P)$-sentence. 
An $\R$-subspace $W$ of $\R^d$ has dimension at least $k$ if and only if there are  $v_1,\dots,v_k \in W$ and a strictly increasing sequence $(i_1,\dots,i_k)\in \{1,\dots,n\}^k$ such that for all $j \in \{1,\dots, k\}$ 
\[
v_{j,1}=\dots=v_{j,i_j-1}=0 \text{ and } v_{j,i_j}\ne0.
\]
Indeed, every such tuple is linearly independent, and conversely every linearly independent tuple can be transformed into such a tuple by Gaussian elimination. The set of tuples satisfying the above condition is $\mathcal{L}_\Q$-definable. Hence we can express using an $\mathcal{L}_\Q(P)$-sentence if an $\R$-subspace of $\R^n$ has dimension at least $k$. The first part of the lemma follows easily.\\
For the second part it suffices to construct such a formula for a fixed open box $U$ around the origin of diameter $\eps$. We can use the construction of the first part, just weakening the condition of being an additive subgroup to ``$a+b\in X$ whenever $a,b\in X$ and $a,b,a+b\in U$''.
\end{proof}

\begin{fact}\label{verticesdef}
  Let $K\subseteq\R$ be a subfield. Then there exists a $d$-ary $\mc L_K(P)$-formula $\Theta_d(x_1,\dots,x_d)$ such that for every bounded $X\subseteq \R^d$ definable in $(\R,{<},+,1,(x\mapsto \alpha x)_{\alpha \in K})$, the set $V\subseteq\R^d$ defined by $\Theta_d$ in $(\R,{<},+,1,(x\mapsto \alpha x)_{\alpha \in K},X)$ satisfies the following properties:
  \begin{enumerate}
    \item $V$ is finite, and
    \item $X$ is a union of $K$-polyhedra with vertices in $V$.
  \end{enumerate}
\end{fact}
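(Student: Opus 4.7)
The approach will be to define $\Theta_d(v)$ to pick out those $v \in \overline{X}$ at which $X$ admits no nontrivial local translational symmetry. Writing $B(v,\eps) := \{y \in \R^d : \|y - v\|_\infty < \eps\}$ for the open $\eps$-box around $v$, which is $\mathcal{L}_K$-definable from $<$ and $+$ alone, I set
\[
\Theta_d(v) \equiv \bigl(\forall \eps > 0\, \exists y : y \in B(v,\eps) \wedge P(y)\bigr) \wedge \bigl(\forall w \ne 0\, \forall \eps > 0\, \exists y, t > 0 : y, y + tw \in B(v,\eps) \wedge P(y) \not\leftrightarrow P(y + tw)\bigr).
\]
The first conjunct asserts $v \in \overline{X}$; the second says that no nonzero direction $w$ acts as a local translational symmetry of $X$ at $v$. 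Both clauses are $\mathcal{L}_K(P)$-formulas uniform in $X$.

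For (1), finiteness of $V$: by the quantifier elimination for $(\R, <, +, 1, (x \mapsto \alpha x)_{\alpha \in K})$, $X$ is a Boolean combination of $K$-linear (strict or non-strict) inequalities, yielding a finite arrangement $\mathcal{A}$ of $K$-hyperplanes whose relatively open cells partition $\R^d$, each cell either contained in or disjoint from $X$. Suppose $v$ lies in the relative interior of a cell $C$ of dimension $k \ge 1$; for $\eps$ small enough, the only hyperplanes of $\mathcal{A}$ meeting $B(v,\eps)$ are those containing $C$, and these all contain any nonzero $w$ parallel to the affine hull of $C$. Then for every $y \in B(v,\eps)$ and small $t > 0$, translation by $tw$ preserves membership in every such halfspace, hence preserves cell and $X$-membership, refuting $\Theta_d(v)$. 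Thus $V$ is contained in the finite set $V_{\mathcal{A}}$ of $0$-dimensional cells of $\mathcal{A}$.

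For (2), the decomposition: the cell decomposition $X = \bigsqcup \{C \in \mathcal{A} : C \subseteq X\}$ expresses $X$ as a disjoint union of relatively open convex $K$-polyhedra with vertices in $V_{\mathcal{A}}$. I process each redundant vertex $v_0 \in (V_{\mathcal{A}} \cap \overline{X}) \setminus V$ inductively: since $\Theta_d(v_0)$ fails, there is $w \ne 0$ and $\eps > 0$ with $X \cap B(v_0,\eps)$ invariant under $w$-translations. This invariance forces any hyperplane $H \in \mathcal{A}$ through $v_0$ transverse to $w$ to not truly separate $X$ from $X^c$ inside $B(v_0,\eps)$: any witnessing pair near $v_0$ would propagate under $\pm w$-translations into a contradiction. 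Hence adjacent cells across $H$ agree on $X$-membership near $v_0$ and can be merged into larger convex polyhedra no longer having $v_0$ as a vertex. Gluing these local modifications with the unchanged cell structure outside the neighborhoods $B(v_0,\eps)$ yields a decomposition with one fewer redundant vertex; iterating produces $X = \bigcup_j P_j$ with each $P_j$ a convex $K$-polyhedron whose vertices lie in $V$.

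The main obstacle is making the iterative cell-merging step globally coherent. Each merge is only valid within the specific neighborhood $B(v_0,\eps)$, so the merged polyhedra must be truncated to their regions of validity and stitched with the ambient decomposition outside. Concretely, each piece $P_j$ may be realized as $\overline{C} \cap H$ for a cell $C$ of $\mathcal{A}$ and a conjunction $H$ of strict and non-strict $K$-linear inequalities drawn from $\mathcal{A}$, chosen to excise the redundant vertices from the vertex set while preserving convexity. The finiteness of $\mathcal{A}$, of $V$, and of the number of inductive steps together ensure termination.
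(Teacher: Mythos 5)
Your formula $\Theta_d$ is a legitimate $\mathcal L_K(P)$-formula and your proof of (1) is correct: after quantifier elimination, membership in $X$ near a point $v$ is governed by the hyperplanes of the arrangement $\mathcal A$ passing through $v$, and if $v$ lies in a cell of dimension $\ge 1$ these all contain a common direction, which is then a local translational symmetry; hence $V$ is contained in the finite set of $0$-cells. The genuine gap is in (2), in the cell-merging step. Your construction only ever outputs pieces bounded by hyperplanes of $\mathcal A$ (you realize each $P_j$ as $\overline C\cap H$ with $H$ a conjunction of inequalities \emph{drawn from $\mathcal A$}), and that is not enough. Take $d=2$ and $X=\bigl((0,2)\times(0,1)\bigr)\cup\bigl((0,1)\times(0,2)\bigr)$, so $\mathcal A$ consists of the six lines $x_i\in\{0,1,2\}$. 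Here $V$ is the set of six corners of this L-shaped region, including the reflex corner $(1,1)$, whereas $(1,0)$ and $(0,1)$ are \emph{not} in $V$: near them $X$ is locally a half-plane, hence translation-invariant along its boundary direction. Now any $2$-dimensional convex piece bounded only by lines of $\mathcal A$ and contained in $X$ is an axis-parallel box with vertices in the grid $\{0,1,2\}^2$, and any such box meeting the open cell $(1,2)\times(0,1)$ is (up to boundary faces) either $(1,2)\times(0,1)$ or $(0,2)\times(0,1)$, so it has $(1,0)$ or $(0,1)$ among its vertices. Since finitely many lower-dimensional pieces cannot cover that cell, \emph{no} decomposition of the kind your procedure produces has all vertices in $V$; one is forced to introduce a hyperplane not in $\mathcal A$, e.g.\ the diagonal $x_2=x_1$ through $(0,0)$ and $(1,1)$, which cuts $X$ into two trapezoids with vertices in $V$. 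This example also shows your induction is not monotone: merging across $x_1=1$ to remove the redundant vertex $(1,0)$ creates a piece with vertex $(0,1)$, and vice versa; and truncating merged pieces to the neighborhoods $B(v_0,\eps)$ introduces new vertices on the truncation boundary that need not lie in $V$ (nor even in $V_{\mathcal A}$).

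So while your $V$ (closure points of $X$ with no local translational symmetry) is a reasonable candidate and statement (2) for it is plausibly true, establishing it requires an argument that is allowed to introduce new cutting hyperplanes through points of $V$ and to control the resulting global re-decomposition; your sketch does not supply this, and its stated mechanism provably fails on the example above. Note that the paper avoids the problem entirely: it never tries to pin down an intrinsic vertex set, but builds, by downward induction on the dimension $k$ of faces and using the definable detector of ``locally equal to a $k$-dimensional affine subspace'' from \cref{lemsubspdef}, a finite definable set guaranteed to contain all $\le k$-faces of \emph{some} polyhedral decomposition; for the intended application only some finite, uniformly definable superset of a vertex set is needed, which is a weaker and easier target than the canonical set you aim for.
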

\begin{proof}
We show by decreasing induction on $k\le d$ that there is an $\mc L_K(P)$-formula $\Theta_{k,d}(x_1,\dots,x_d)$ such that for every bounded set $X\subseteq \R^d$ definable in $(\R,{<},+,1,(x\mapsto \alpha x)_{\alpha \in K})$
\begin{enumerate}
    \item the set \[\{ a \in \R^d \ : \ (\R,{<},+,1,(x\mapsto \alpha x)_{\alpha \in K},X)\models \Theta_{k,d}(a)\}\] is a finite union of $\le k$-simplices, 
    \item $X$ is a union of $K$-polyhedra all whose $\le k$-faces are contained in this set.
\end{enumerate}
Then $\Theta_{0,d}(x_1,\dots,x_d)$ is the desired formula.\newline
\noindent For $k=d$, simply set $\Theta_{k,d}$ to be $\top$, using that sets definable in $(\R,{<},+,1,(x\mapsto \alpha x)_{\alpha \in K})$ are finite unions of $K$-polyhedra. Suppose now that $k\in \{0,\dots,d-1\}$ and we have constructed $\Theta_{k,d}$ with the desired property. By \cref{lemsubspdef} there is an $\mc L_K(P)$-formula $\Theta_{k-1,d}(x_1,\dots,x_d)$ such that  
for every bounded set $X\subseteq \R^d$ definable in $(\R,{<},+,1,(x\mapsto \alpha x)_{\alpha \in K})$, and every $b\in \R^d$, the following are equivalent
\begin{enumerate}
    \item $(\R,{<},+,1,(x\mapsto \alpha x)_{\alpha \in K},X)\models \Theta_{k-1,d}(b)$,
    \item there is no  $k$-dimensional affine subspace $W$ and no neighbourhood $U$ of $b$
    such that either 
    \begin{enumerate}
        \item $\{ a \in X \ : \ (\R,{<},+,1,(x\mapsto \alpha x)_{\alpha \in K},X)\models \Theta_{k,d}(a)\}\cap U=W\cap U$, or 
        \item $\{ a \in \R^d\setminus X \ : \ (\R,{<},+,1,(x\mapsto \alpha x)_{\alpha \in K},X)\models \Theta_{k,d}(a)\}\cap U=W\cap U$.
    \end{enumerate}
\end{enumerate}
We leave it to the reader to check that this formula has the desired properties.
\end{proof}

\begin{proof}[Proof of \cref{definbounded}]
Suppose (1) fails. Without loss of generality, we may assume that $X\subseteq\R_{\ge0}^d$.  By the failure of (1) and \cref{bc1fact}, we have that for each $m=(m_1,\dots,m_d)\in \N^d$, the set $X\cap\prod_{i=1}^d[m_i,m_i+1]$ is definable in $(\R,<,+,1)$. Let $W$ be the union of the sets of \cref{verticesdef}, applied to $\Q$ and $X\cap\prod_{i=1}^d[m_i,m_i+1]$, over $m\in\N^d$. Then $W$ is definable in $(\R,{<},+,\Z,X)$ and countable. Set
  \[
  B=\set{(x_1-\floor{x_1},\dots,x_d-\floor{x_d})\ : \ x\in W}.
  \]
   Note that $B$ is a countable subset of $[0,1]^d$ definable in $(\R,{<},+,\Z,X)$. By the failure of (1), the set $B$ is definable in $(\R,{<},+,\Z)$, and hence a finite subset of $\Q^d$. Set
   \[
   C:=\set{\frac1k\sum_{i=1}^kb_i:1\le k\le d+1\text{ and }b_i\in B}.
   \]
   Note that each simplex occurring in $(X-(m_1,\dots,m_d))\cap[0,1]^d$ contains an element of $C$ as interior point. This follows immediately as its vertices lie in $B$. Thus $(X-(m_1,\dots,m_d))\cap[0,1]^d=(X-(m_1',\dots,m_d'))\cap[0,1]^d$ if and only if this equality holds after intersecting with $C$.\\
  Hence the equivalence relation thus defined has finitely many equivalence classes and is definable in $(\R,{<},+,\Z,X)$. It can not be definable in $(\R,{<},+,\Z)$, because $X$ is not. Thus one of its finitely many equivalence classes is also not definable in $(\R,{<},+,\Z)$, thus witnessing (2).
\end{proof}

\noindent We finish this subsection by introducing some notation and tools that allow us to track local non-definability of sets.

\begin{defn}
 Let $S\subseteq \R^d$, and let $x \in \R^d$. We say $x$ is a \textbf{bad point with respect to $S$} if for every neighborhood $U$ of $x$, the set $S\cap U$ is not definable in $(\R,{<},+,1)$.
\end{defn}

\begin{lem}\label{zerobad}
Let $X\subseteq\R^d$, and let $q=(q_1,\dots,q_d) \in \Q^d$ be a bad point with respect to $X$. Then $(\R,{<},+,1,X)$ defines a set $X'\subseteq[0,1]^d$ such that $0$ is a bad point with respect to $X'$.
\end{lem}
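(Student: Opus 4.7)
The plan is to translate the bad point to the origin and then fold the result into the non-negative orthant using a sign decomposition, finally truncating to $[0,1]^d$. First, I would set $Y := X - q$, which is definable in $(\R,{<},+,1,X)$ because $q \in \Q^d$ and rational scaling is already definable in $(\R,{<},+,1)$. Since translation by a rational vector is an $(\R,{<},+,1)$-definable bijection preserving $(\R,{<},+,1)$-definability of neighborhood-intersections, $0$ is a bad point with respect to $Y$.

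The key step is a sign decomposition near the origin. For each $\sigma \in \{-1,+1\}^d$, I would consider the closed orthant $Q_\sigma := \{y \in \R^d : \sigma_i y_i \geq 0 \text{ for all } i\}$ and the reflection $\phi_\sigma(y) := (\sigma_1 y_1, \dots, \sigma_d y_d)$. Set $X_\sigma := \phi_\sigma(Y \cap Q_\sigma) \subseteq \R^d_{\geq 0}$, which is definable in $(\R,{<},+,1,X)$. The core claim is that $0$ is a bad point with respect to $X_\sigma$ for at least one $\sigma$. If not, for each $\sigma$ there is $\eps_\sigma > 0$ such that $X_\sigma \cap [0, \eps_\sigma)^d$ is $(\R,{<},+,1)$-definable; setting $\eps := \min_\sigma \eps_\sigma$ and applying the involutions $\phi_\sigma$ yields definable sets $\phi_\sigma(X_\sigma \cap [0, \eps)^d) = Y \cap \phi_\sigma([0,\eps)^d)$ whose union over $\sigma$ equals $Y \cap (-\eps,\eps)^d$, because the half-open boxes $\phi_\sigma([0,\eps)^d)$ cover $(-\eps,\eps)^d$. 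This would contradict $0$ being a bad point with respect to $Y$.

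Fixing such a $\sigma$, I would set $X' := X_\sigma \cap [0,1]^d$. Then $X' \subseteq [0,1]^d$ is definable in $(\R,{<},+,1,X)$. For any neighborhood $U \subseteq (-1,1)^d$ of $0$, the containment $X_\sigma \subseteq \R^d_{\geq 0}$ forces $X_\sigma \cap U \subseteq [0,1)^d$, and hence $X' \cap U = X_\sigma \cap U$, which is not $(\R,{<},+,1)$-definable by the choice of $\sigma$. Thus $0$ is a bad point with respect to $X'$.

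The main obstacle is selecting the right covering of $\R^d$ in the sign decomposition: using \emph{closed} orthants is essential to capture the coordinate hyperplanes, which guarantees $\bigcup_\sigma \phi_\sigma([0,\eps)^d) = (-\eps,\eps)^d$ and therefore that the finitely many definable pieces really exhaust $Y$ near $0$. Definability of the orthants and reflections in $(\R,{<},+,1)$ is routine, since negation and sign constraints are first-order expressible using $+$ and $<$.
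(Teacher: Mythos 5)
Your proposal is correct and follows essentially the same route as the paper: an orthant (sign) decomposition around the bad point, a covering argument with finitely many definable pieces intersected with rational boxes to locate one orthant on which the point stays bad, and then reflections/translation to land the set in $[0,1]^d$ with $0$ bad. The only differences are cosmetic (you translate to the origin first and reflect every piece, while the paper decomposes around $q$ and reflects only the chosen orthant), and your implicit uses of rational radii and of intersecting with a definable box are exactly the standard details the paper also invokes.
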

\begin{proof}
  Throughout, definable means definable in $(\R,{<},+,1)$. By translating, we reduce to the case that $X\subseteq[0,1]^d$. Set
  \[
  A_k := \{(x_1,\dots,x_d) \in \R^d \ : \  k_ix_i\le k_iq_i\},
  \]
  and observe that $\R^d=\bigcup_{k\in\set{-1,1}^d} A_k$. We now show that there is $k_0\in\set{-1,1}^d$ such that $q$ is a bad point with respect to $X\cap A_{k_0}$. Indeed, suppose $q$ is not a bad point with respect to $X \cap A_k$ for $k\in\set{-1,1}^d$. Thus for each $k\in\set{-1,1}^d$ we pick an open neighborhood $U_k$ of $q$ such that $(X\cap A_k) \cap U_k$ is definable. By intersecting with an open box whose corners have rational coordinates, we may assume that each $U_k$ is definable.
  Thus for each $k \in \set{-1,1}^d$ the set
    \[
    (X\cap A_k) \cap \bigcap_{k'\in\set{-1,1}^d}U_{k'}
    \]
    is definable. Since the $A_k$'s cover $\R^d$, we conclude that $X\cap\bigcap_{k'}U_{k'}$ is definable. This contradicts that $q$ is a bad point with respect to $X$.\par
  \noindent We may that assume $k_0=(-1,\dots,-1)$ by applying a reflection along a coordinate axis. Since $q$ is a bad point with respect to $X\cap A_{k_0}$, we get by translation that $0$ is a bad point with respect to 
  \[
  (X\cap A_{k_0})-q=(X-q)\cap[0,1]^d.
  \]
\end{proof}

\section{Regularity and numeration systems}

In this section, we introduce an abstract notion of a numeration system for real numbers that is convenient for our purposes and allows us to prove equivalence of recognizability and definability for such a system. 

As mentioned in the introduction, we not only want to give a uniform treatment of a large class of numeration systems, we also need to address the differences that arise from the choice of encoding the integer part and fractional part of a real number either parallely or sequentially. In this section, we define the correct notions of parallel and sequential regularity for extended $\omega$-words.

\begin{defn}
An \textbf{extended $\omega$-word} is a sequence $(x_i)_{i\in \Z}\in\N^\Z$ such that
\begin{enumerate}
    \item there is $N\in \Z$ with $x_i=0$ for all $i\in \Z_{\geq N}$, and 
    \item $\{x_i \ : \ i\in \Z\}$ is bounded.
\end{enumerate}
Let $\mathcal{W}$ denote the set of extended $\omega$-words, and  for each $M\in\N$ we let $\mc W_M$ denote the set of extended $\omega$-words $(x_i)_{i\in \Z}$ with $x_i\le M$ for all $i\in\Z$. We say that $X\subseteq\mc W$ is \textbf{bounded} by $M$ if $X\subseteq\mc W_M$.
\end{defn}

\noindent When we write $x_Nx_{N-1}\cdots x_0\star x_{-1}x_{-2}\cdots$, where $N\in \N$ and $x_i\in \N$ for all $i\in \Z_{\leq N}$, we mean the extended $\omega$-word $(w_i)_{i\in \Z}$ such that
\[
w_i = \begin{cases}
x_i & i \leq N \\
0 & \, \text{otherwise.}
\end{cases}
\]
Note that every extended word is of this form, and the representation is unique up to choosing $N$. We call the subword $x_Nx_{N-1}\cdots x_0$ in this representation the \textbf{integral part} and the infinite subword $x_{-1}x_{-2}\cdots$ the \textbf{fractional part} of $(w_i)_{i\in \Z}$.
\begin{defn}
Let $X\subseteq\mathcal{W}^n$ be bounded by $M$. We call
    \begin{multline*}
      \{((x^1_N,\dots,x^n_N)\cdots(x^1_0,\dots,x^n_0)\star (x^1_{-1},\dots,x^n_{-1})\cdots)\in(\Sigma_M^n\cup\{\star\})^\omega\\
      \ : \ (x^1_N\cdots x^1_0\star x^1_{-1}\cdots,\dots,x^n_N\cdots x^n_0\star x^n_{-1}\cdots)\in X\}
    \end{multline*}
    the \textbf{sequential representation} of $X$, and we say $X$ is \textbf{sequentially regular} if its sequential representation is $\omega$-regular.  We call
        \begin{multline*}
      \{((x^1_0,x^1_{-1},x^2_0,\dots,x^n_0,x^n_{-1})(x^1_1,x^1_{-2},x^2_1,\dots,x^n_1,x^n_{-2})\cdots)\in(\Sigma_M^{2n})^\omega\\
      \ : \ (x^1_N\cdots x^1_0\star x^1_{-1}\cdots,\dots,x^n_N\cdots x^n_0\star x^n_{-1}\cdots)\in X\}
    \end{multline*}
    the \textbf{parallel representation} of $X$, and we say $X$ is \textbf{parallelly regular} if its parallel representation is $\omega$-regular. A relation or function is sequentially (parallelly) regular if its graph is.
\end{defn}
\begin{prop}\label{prop:srimppr}
Let $X\subseteq \mathcal{W}^n_M$ be sequentially regular. Then $X$ is parallelly regular.
\end{prop}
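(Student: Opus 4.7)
The plan is to exploit the fact that $\star$ occurs exactly once in every word of the sequential representation, decompose this representation as a finite union of sets of the form $K \cdot \{\star\} \cdot M$ with $K$ a regular language of \emph{finite} words over $\Sigma_M^n$ and $M$ an $\omega$-regular language over $\Sigma_M^n$, reverse each $K$ so that its direction matches the parallel encoding, and reassemble using closure under convolution.

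More concretely, fix a Muller automaton $\mc A = (Q, \Sigma_M^n \cup \{\star\}, T, I, F)$ accepting the sequential representation $L$ of $X$. Restricting $\mc A$ if necessary, we may assume every accepting run uses exactly one $\star$-transition. For each transition $(q, \star, q') \in T$, let $K_q \subseteq (\Sigma_M^n)^*$ be the regular language of labels on paths from some initial state to $q$ that use no $\star$-transition, and let $M_{q'} \subseteq (\Sigma_M^n)^\omega$ be the $\omega$-regular language of labels on accepting continuations starting at $q'$ (with acceptance condition inherited from $F$). A direct check yields
\[
L = \bigcup_{(q, \star, q') \in T} K_q \cdot \{\star\} \cdot M_{q'}.
\]

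Now compare the two encodings. If $x \in X$ has sequential representation $(x_N)\cdots(x_0) \star (x_{-1})(x_{-2})\cdots$, then, up to a fixed reshuffling of coordinates at each position (which clearly preserves $\omega$-regularity), the parallel representation is the convolution $c(\mathrm{int}(x),\mathrm{frac}(x))$, where $\mathrm{int}(x) = (x_0)(x_1)\cdots(x_N)\mathbf{0}\,\mathbf{0}\cdots$, with $\mathbf{0}$ the zero vector in $\Sigma_M^n$, and $\mathrm{frac}(x) = (x_{-1})(x_{-2})\cdots$. Thus $\mathrm{int}(x)$ is the reverse of the sequential integer part padded by zero vectors, while $\mathrm{frac}(x)$ matches the sequential fractional part verbatim. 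Since finite-word regular languages are closed under reversal, each $K_q^R := \{w^R : w \in K_q\}$ is regular, and consequently $U_q := K_q^R \cdot \{\mathbf{0}\}^\omega$ is $\omega$-regular. The (reshuffled) parallel representation of $X$ equals
\[
\bigcup_{(q, \star, q') \in T} \{c(u, v) : u \in U_q,\ v \in M_{q'}\},
\]
which is $\omega$-regular by \cref{fact:buechi}.

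The single conceptually nontrivial step is the reversal of the integer part: $\omega$-words cannot be reversed, so it is essential to split $L$ at $\star$ and view its integer side as a finite-word regular language before inverting orientation. The non-uniqueness of $N$ in the sequential notation causes no difficulty, because $K_q^R \cdot \{\mathbf{0}\}^\omega$ automatically absorbs any finite number of trailing zero vectors.
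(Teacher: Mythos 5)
Your proof is correct and takes essentially the same route as the paper: both split the accepted sequential language at the (unique) $\star$-transition and then recombine a reversed integral part with the unchanged fractional part. The only real difference is packaging — the paper builds an explicit two-track product automaton that simulates the integral-part run backwards while running the fractional part forwards, whereas you delegate the same steps to standard closure properties (reversal of regular languages of finite words, the $KL^\omega$ characterization from \cref{condomegreg}, and closure under convolution from \cref{fact:buechi}).
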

\noindent The converse is easily seen to fail: simply consider the set with sequential representation 
\[
\{ a_n \cdots a_2a_1a_0\star a_0a_1 \cdots a_n0^{\omega} : a_0a_1\cdots a_n \in \Sigma_M^*\}.
\]
This set is parallelly regular, but not sequentially regular.

\begin{proof}[Proof of \cref{prop:srimppr}]
Let $\mc A=(Q,\Sigma_M^n\cup\{\star \},T,I,F)$ be a Büchi automaton recognizing the sequential representation of $X$. Let $\mc A_{p_1,p_2}=(Q,\Sigma_M^n\cup\{\star \},T_{p_1,p_2},I,F)$ be the Büchi automaton obtained from $\mc A$ such that
\[
T_{p_1,p_2} = T \setminus \{ (q_1,\star ,q_2) \ : (q_1,q_2) \ne (p_1,p_2)\}.
\]
Since each accepting computation contains a unique transition of the form $p_1\xto\star p_2$, the accepted language of $\mc A$ is the finite union over the languages accepted by $\mc A_{p_1,p_2}$ for $p_1\xto\star p_2$ in $\mc A$. Thus we may assume that $\mc A$ has a unique such transition.\par

\noindent In this case define the Büchi automaton
\[\mc A'=(Q^2 \times \{0,1\},\Sigma_M^{2n},T',\{(p_1,p_2,0)\},I\times F\times\set{0,1}),\]
where 
\begin{align*}
T' &= \{ ((q_1,q_2,0),(s_1,s_2),(q_3,q_4,i)) : q_1\xto{s_1}q_3 \text{ and } q_4\xto{s_2}q_2 \}\\
& \cup \{((q,q_2,1),(0,s_2),(q,q_4,1)) :  q_2\xto{s_2}q_4 \}.
\end{align*}
So, we simulate $\mc A$ on the first coordinate backwards from $p_1$, recognizing the integral part, and on the second coordinate forwards from $p_2$, recognizing the fractional part. The transition moves from one copy of $Q^2$ to the other when the integral part has ended. It is clear from the construction that $\mc A'$ recognizes the parallel representation of $X$.
\end{proof}

\subsection{Numeration systems} We are now ready to introduce a notion of abstract numeration systems.
\begin{defn}
\begin{enumerate}
    \item A \textbf{pre-numeration system} is a sequence $U=(U_i)_{i\in \Z} \in\R^\Z$ such that
    \begin{enumerate}
        \item $|U_i|<|U_j|$ for all $i,j \in \Z$ with $i<j$, and 
        \item for all $M\in\N$ and $w\in \mathcal{W}_M$ the series $\sum_iw_iU_i$ converges.
    \end{enumerate}
     \item We define 
    \begin{align*}
     [-]_U\colon\bigcup_M\mc W_M&\to\R,\\   
     w&\mapsto\sum_iw_iU_i.
    \end{align*}
Similarly, if $w$ is just an $\omega$-word, we define $[w]_U$ as the value of the corresponding extended word, i.e., $[w]_U:=[\star w]_U$, and if $w$ is a finite word, we set $[w]_U:=[w\star 0^\omega]_U$.
    \item A \textbf{$U$-representation} of $x\in\R$ is an extended word $w$ with $[w]_U=x$.
\end{enumerate}
\end{defn}

\begin{defn}
A \textbf{numeration system} is a triple $\mathcal{S}=(M,U,\rho)$ consisting of  
  \begin{enumerate}
    \item a natural number $M$,
    \item a pre-numeration system $U$, and
    \item a right inverse $\rho: \R_{\ge0}\to \mathcal{W}_M$ of the restriction of $[-]_U$ to $\mc W_M$.
  \end{enumerate}
  We write $[-]_{\mc S}$ for $[-]_U$, and an \textbf{$\mathcal{S}$-representation} of $x\in\R_{\ge0}$ is an $U$-representation of $x$.
\end{defn}

\noindent Note that an $\mc S$-representation of a non-negative real number is not unique.

\begin{defn}
 Let $\mathcal{S}=(M,U,\rho)$ be a numeration system. 
 We say $w \in\mathcal{W}_M$ is \textbf{$\mathcal{S}$-normalized} if $w$ is in the image of $\rho$. The \textbf{$\mathcal{S}$-normalization} of $w$ is $\rho([w]_{\mc S})$.\newline
 We define $\N_{\mathcal{S}}$  as the set of numbers $x\in \R_{\ge0}$ such that $\rho(x)=y\star 0^\omega$ for some $y\in\set{0,\dots,M}^*$. \newline
 We say $\mathcal{S}$ is \textbf{greedy} if for all $w \in\mathcal{W}_M$, the following are equivalent:
 \begin{enumerate}
     \item $w$ is $\mathcal{S}$-normalized,
     \item $\sum_{i=-\infty}^{j-1} \rho(x)_i U_i < U_j$ for all $j\in \Z$.     
\end{enumerate}
\end{defn}
\begin{ex}
    Let $\beta\in \R_{>1}$. Clearly, $U_{\beta}:=(\beta^i)_{i\in\Z}$ is a pre-numeration system. We extend it to a greedy numeration system $\mc S_\beta$, which we call the \textbf{power numeration system based on $\beta$}: set $M_{\beta}:=\ceil{\beta}-1$, and let $\rho_{\beta}: \R_{\geq 0} \to \mathcal{W}_M$ map $x\in \R_{\geq 0}$ to the lexicographically maximal $w\in\mc W_M$ with $[w]_{U\beta}=x$. The $\mathcal{S}_{\beta}$-representations of a positive number are precisely its $\beta$-representation as introduced in \cite{Renyi}, and its $\beta$-expansion (as in \cite[Section 2.1]{CLR}) is its normalized $\mathcal{S}_{\beta}$-representation.
\end{ex}

\noindent Of course, $\mc S_{10}$ is the usual decimal numeration system, that is, $\rho(x)$ is the unique decimal representation of $x \in \R_{\geq 0}$ that does not end in $9^{\omega}$.

\begin{ex}
Let $\alpha$ be a quadratic irrational number. The continued fraction expansion $$[a_0;a_1,\dots,a_k,\dots]$$ of $\alpha$ is ultimately periodic, and in particular bounded. 
  The \textbf{Ostrowski numeration system} $\mathcal O_{\alpha}=(M^{\alpha},U^{\alpha},\rho^{\alpha})$ with respect to $\alpha$ is defined as follows:
  \begin{enumerate}
      \item $M^{\alpha} := \max_{k\in \N} a_k$
      \item $U^\alpha_i:=\begin{cases}
			q_{i+1}, & \text{if $i\geq 0$,}\\
            \beta_{-i}, & \text{otherwise,}
            		 \end{cases}$
    \item $\rho^{\alpha}: \R_{\ge0}\to \mathcal{W}_{M_{\alpha}}$ maps $x$ to the unique $b=(b_i)_{i\in \Z} \in \mathcal{W}_{M_{\alpha}}$ such that
    \begin{enumerate}
        \item $x=[b]_{U^{\alpha}},$
        \item $b_0,b_{-1}<a_1,b_{k+1},b_{-k}\le a_k$, and $b_{k+1}=a_k$ (resp. $b_{-k}=a_k$) implies $b_k=0$ (resp. $b_{-k+1}=0$), for all $k\in \N$, and 
        \item  $b_{-k}<a_k$ for infinitely many odd $k\in \N$. 
    \end{enumerate}
  \end{enumerate}
\noindent Well-definedness of $\rho^{\alpha}$ follows from \cref{ostrowski,ostrowskireal}.
\end{ex}

\subsection{Feasible numeration systems}
\noindent So far, we have imposed nearly no conditions on our numeration systems. However, the numeration system we wish to consider all satisfy strong regularity conditions. 
 \begin{defn} Let $\mathcal{S}=(M,U,\rho)$ be a numeration system. We say $\mathcal{S}$ is \textbf{feasible} if
    \begin{enumerate}
    \item the $\mc S$-normalization map $\mc W_M \to \mc W_M$ sending $w$ to $\rho([w]_{\mc S})$ is sequentially regular,
    \item the relation 
    \[
    \{(w_1,w_2)\in \mathcal{W}_M^2 \ : \ [w_1]_{\mc S}<[w_2]_{\mc S}\}
    \]
    is sequentially regular,
    \item there is $k\in \N$ such that every word of the form 
    \[a_n0^ka_{n-1}0^k\cdots a_00^k\star a_{-1}0^ka_{-2}0^k\cdots\]
    with $a_i\in\{0,1\}$ is $\mc S$-normalized.
    \end{enumerate}

\end{defn}

\begin{lem}
  Let $\mathcal{S}=(M,U,\rho)$ be a greedy numeration system. Then $\mc S$ is feasible if and only if $\mc S$-normalization is sequentially regular and there is $k\ge0$ such that every word $10^k\cdots 10^k\star 10^k\cdots$ is $\mc S$-normalized.
\end{lem}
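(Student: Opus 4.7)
The forward implication is immediate: feasibility's condition (1) is exactly the first hypothesis, and specializing feasibility's condition (3) to all $a_i = 1$ recovers the second hypothesis.

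For the converse, I need to derive feasibility's (2) and (3) from greediness, sequentially regular normalization, and the $\mc S$-normalization of $v := \cdots 10^k 10^k \star 10^k \cdots$. A preliminary technical point is that greediness forces the $U_j$ to be positive at every index $j \in \Z$; this should be extracted by applying the greedy characterization to words of controlled value (for instance $\rho(0)$ and $v$ itself), exploiting the strictness of the partial-sum inequality together with the fact that $\rho$ is a well-defined right inverse of $[-]_U$.

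With positivity of $U_j$ in place, condition (3) of feasibility follows by pointwise monotonicity of partial sums: any $w = a_n 0^k \cdots a_0 0^k \star a_{-1} 0^k \cdots$ with $a_i \in \{0,1\}$ satisfies $w \le v$ coordinatewise, so $\sum_{i<j} w_i U_i \le \sum_{i<j} v_i U_i < U_j$ for every $j \in \Z$, and greediness yields $\mc S$-normalization of $w$.

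For condition (2), the plan is to reduce value comparison to lexicographic comparison of normalized words. The key computation is that for $\mc S$-normalized $w_1 \ne w_2$, letting $j^*$ be the largest index where they differ,
\[
[w_2]_{\mc S} - [w_1]_{\mc S} = (w_2(j^*) - w_1(j^*)) U_{j^*} + \sum_{i<j^*} (w_2(i) - w_1(i)) U_i,
\]
and the greedy bound $\bigl|\sum_{i<j^*}(w_2(i)-w_1(i))U_i\bigr| < U_{j^*}$ combined with $U_{j^*} > 0$ yields $[w_1]_{\mc S} < [w_2]_{\mc S}$ iff $w_1(j^*) < w_2(j^*)$. In the sequential representation this is standard left-to-right lex comparison, which is $\omega$-regular. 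Intersecting two copies of the (hypothetically) sequentially regular normalization relation with this lex comparison, and projecting onto $(w_1, w_2)$ via \cref{fact:buechi}, then delivers sequential regularity of the comparison relation. The main obstacle is cleanly discharging the preliminary $U_j > 0$ step; after that, both implications reduce to careful partial-sum bookkeeping and standard closure properties of $\omega$-regular languages.
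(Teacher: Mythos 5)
Your overall route is the same as the paper's: for the nontrivial direction, the paper's (two-sentence) proof is exactly your two steps, namely that on normalized words the value order is lexicographic (giving sequential regularity of the comparison relation via the regular normalization map and closure properties), and that replacing the $1$'s of the pattern word by arbitrary $a_i\in\{0,1\}$ preserves normalizedness "by definition of greediness". Your elaboration of both steps is correct \emph{granted} that $U_j>0$ for every $j\in\Z$: then partial sums of words are nonnegative, so the greedy inequalities pass to subwords (condition (3)), and the bound $\bigl|\sum_{i<j^*}(w_2(i)-w_1(i))U_i\bigr|<U_{j^*}$ holds for normalized words (condition (2)). The one genuine gap is the step you yourself flag: your proposed extraction of positivity from the greedy characterization applied to $\rho(0)$ and to $v$ does not go through as sketched. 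Applying greediness to $\rho(0)$ only yields $U_j>[\rho(0)]_U=0$ at indices $j$ above the top nonzero digit of $\rho(0)$ (where the partial sum equals the value), and applying it to $v$ only yields $U_j>\sum_{i<j}v_iU_i$, whose sign at small or negative $j$ is not controlled by the hypotheses; so neither word gives positivity at all indices.

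The gap is fillable by a short exchange argument, which is worth making explicit since the lemma is stated for an arbitrary greedy system. Let $w=\rho(0)$; by greediness $w$ satisfies all the inequalities $\sum_{i<j}w_iU_i<U_j$. If $U_p>0$ and $w_p\ge1$, decrease the digit at $p$ by one: partial sums at $j\le p$ are unchanged and at $j>p$ they decrease, so the new word still satisfies all inequalities, hence is $\mc S$-normalized; but its value is $-U_p<0$, contradicting that normalized words lie in $\rho(\R_{\ge0})$. Symmetrically, if $U_p<0$ and $w_p<M$, increasing the digit gives a normalized word of negative value; and $U_p=0$ would produce two distinct normalized words of value $0$, contradicting uniqueness of normalized representations. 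Hence $w_p=0$ wherever $U_p>0$ and $w_p=M$ wherever $U_p<0$, so $0=[w]_U=M\sum_{p:\,U_p<0}U_p$, which forces the set $\{p:U_p<0\}$ to be empty; together with $U_p\ne0$ this gives $U_p>0$ for all $p$. (The paper elides this point entirely; in the greedy systems it actually uses, namely $\mc S_\beta$ with $U_i=\beta^i$, positivity is immediate.) With that lemma in hand, the rest of your argument is sound and coincides with the paper's.
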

\begin{proof}
  On normalized words, the order $<$ is given by lexicographic ordering, which is clearly sequentially regular. Replacing the ones in the word by $a_i$ preserves normalizedness by definition of greediness.
\end{proof}
\begin{cor}\label{powerfeas}
Let $\beta \in \R_{>1}$. Then $\mathcal S_\beta$ is feasible if and only if $\mc S_{\beta}$-normalization is sequentially regular.
\end{cor}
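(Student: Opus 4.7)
The plan is to invoke the preceding lemma directly. Since $\mathcal{S}_\beta$ is greedy by construction (its normalized representations are exactly the lexicographically maximal $U_\beta$-representations, which is what the usual greedy algorithm for $\beta$-expansions produces), that lemma reduces feasibility of $\mathcal{S}_\beta$ to the conjunction of (a) $\mathcal{S}_\beta$-normalization being sequentially regular, and (b) the existence of some $k \ge 0$ such that every word of the form $10^k 10^k \cdots 10^k \star 10^k 10^k \cdots$ is $\mathcal{S}_\beta$-normalized. The forward implication of the corollary is immediate from the definition of feasibility, so only the backward direction requires work: I must argue that (b) is automatic as soon as $\beta > 1$.

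By the greedy characterization, a word $w \in \mathcal{W}_M$ is $\mathcal{S}_\beta$-normalized iff $\sum_{i \ge 1} w_{j-i} \beta^{-i} < 1$ for every $j \in \Z$. In the specific word above, the positions $n$ with $w_n = 1$ form (the appropriate portion of) the arithmetic progression $\{ m(k+1) - 1 : m \in \Z\}$. The worst $j$ is obtained by choosing $j$ so that $j-1$ is one of these positions; every other $j$ either omits $1$'s from the sum or pushes them to a larger index $i$, reducing the total. Hence for every $j$,
\[
\sum_{i \ge 1} w_{j - i}\, \beta^{-i} \;\le\; \sum_{\ell=0}^{\infty} \beta^{-(1 + \ell(k+1))} \;=\; \frac{\beta^{-1}}{1 - \beta^{-(k+1)}} \;=\; \frac{1}{\beta - \beta^{-k}}.
\]

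This upper bound is strictly less than $1$ precisely when $\beta^{k}(\beta - 1) > 1$. Since $\beta > 1$ gives $\beta - 1 > 0$ and $\beta^k \to \infty$, the inequality holds for all sufficiently large $k$. Fixing any such $k$ verifies condition (b) and thus, together with the hypothesis, feasibility. There is no real obstacle here; the argument is essentially a geometric-series estimate combined with the preceding lemma, and the only minor care needed is verifying that the supremum of the relevant sum over $j$ is indeed attained (up to taking a limit) by a $j$ aligned with the arithmetic progression of $1$'s.
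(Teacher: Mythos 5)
Your proposal is correct and follows essentially the same route as the paper: both invoke the preceding lemma for greedy numeration systems, observe that normalization regularity is the only nontrivial hypothesis, and verify the condition on the words $10^k\cdots10^k\star10^k\cdots$ by a geometric-series estimate, choosing $k$ large enough that the tail sum stays below $1$ (the paper phrases this as a short contradiction with $\beta^{-k}<\tfrac12$, you phrase it directly via $\beta^k(\beta-1)>1$). No gap.
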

\begin{proof}
    It suffices to show that there exists $k\in \N$ such that $(10^k)^n\star(10^k)^\omega$ is $\mc S_{\beta}$-normalized for all $n\in \N$. Suppose not.  Let $k\in \N$ be such that $\beta^{-k}<\frac12$.  By our assumption there is $j\in \Z$ such that $\sum_{i=-\infty}^{j-1}\beta^{ki}\ge\beta^{kj}$. By shifting we may assume that $j=0$. We obtain a contradiction, since
    \[
    1>(1-\beta^{-k})^{-1}-1=\sum_{i=-\infty}^{-1}\beta^{ki}\ge1.\qedhere
    \]
\end{proof}
\noindent If $\beta$ is a Pisot number, then $\mc S_{\beta}$-normalization is sequentially regular by Frougny \cite[Corollary 3.4]{Frougny92}.
\begin{cor}\label{gammafeas}
  Let $\beta \in \R_{> 1}$ be a Pisot number. Then $\mathcal{S}_{\beta}$ is feasible.
\end{cor}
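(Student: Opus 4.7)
The plan is to invoke \cref{powerfeas}: since $\mathcal S_\beta$ is by definition greedy, the corollary reduces feasibility to the single claim that the $\mathcal S_\beta$-normalization map $\mathcal W_{M_\beta}\to\mathcal W_{M_\beta}$, $w\mapsto\rho_\beta([w]_{\mathcal S_\beta})$, is sequentially regular. (The digit-spacing condition in the definition of feasibility needs no extra argument, as it has already been handled for any $\beta>1$ inside the proof of \cref{powerfeas}.) So the whole content of the corollary lies in transferring the normalization result of Frougny \cite[Corollary 3.4]{Frougny92} into our sequential framework.

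Concretely, Frougny constructs, for any Pisot $\beta$, a finite (subsequential) letter-to-letter transducer that reads a $\beta$-representation with bounded digits from the most significant digit downward and outputs the greedy $\beta$-expansion of the same real number. The Pisot hypothesis enters only via the fact that the ``carries'' produced while enforcing the greedy condition remain bounded, so the state space of the transducer is finite. I would use this transducer to build a Büchi automaton recognising the sequential representation of the graph
\[
\{(w,\rho_\beta([w]_{\mathcal S_\beta}))\ :\ w\in\mathcal W_{M_\beta}\}
\]
over the alphabet $\Sigma_{M_\beta}^2\cup\{\star\}$. On the integer part (digits of increasing positive index, scanned by the sequential encoding from most significant down to position $0$) the automaton runs Frougny's transducer directly, tracking the carry in its state. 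Upon reading $\star$ it copies the carry into a second copy of the state space and proceeds on the fractional part (positions $-1,-2,\dots$, again most significant first), which is again exactly the direction Frougny's transducer wants.

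Two small technical points remain. First, the integer part of an input $w\in\mathcal W_{M_\beta}$ has no a priori fixed length, but in the sequential encoding we simply permit arbitrarily many leading zeros before the first nonzero digit; these leave both the value $[w]_{\mathcal S_\beta}$ and the greedy output unchanged, and Frougny's transducer handles them as a well-defined initialisation phase. Second, the output digits of the transducer a priori lie in $\{0,\dots,\lceil\beta\rceil-1\}=\{0,\dots,M_\beta\}$, matching $\mathcal W_{M_\beta}$ exactly, so no truncation is needed. Once the automaton is set up, its Büchi acceptance condition is that the trailing state after the fractional part corresponds to a zero carry with an output respecting the greedy condition, which is a standard safety/Büchi property built into Frougny's transducer.

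The main obstacle is none of the above construction per se, which is routine once Frougny's transducer is in hand, but rather a careful bookkeeping step: verifying that the sequential encoding's insistence on reading integer part and fractional part in the same ``most-significant-first'' direction lines up with Frougny's conventions, and that the transducer can be run on extended $\omega$-words (rather than the two-sided infinite words or finite words often used in that literature). This is essentially a matter of rephrasing, but it is the only point where anything could in principle go wrong, and it is what the statement of \cref{gammafeas} is really asserting beyond Frougny's theorem.
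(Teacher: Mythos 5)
Your proposal is correct and follows essentially the same route as the paper: \cref{gammafeas} is obtained by combining \cref{powerfeas} with Frougny's theorem that $\mathcal S_\beta$-normalization is sequentially regular for Pisot $\beta$, which is exactly the reduction you make. The paper offers no further detail beyond the citation, so your additional transducer bookkeeping is harmless elaboration rather than a divergence.
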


\subsection{Recognizability} 
We are now ready to formally define recognizability of subsets of $\R_{\geq 0}^n$.
\begin{defn}
  Let $\mathcal{S}=(M,U,\rho)$ be a numeration system.   We say $X\subseteq\R_{\geq 0}^n$ is \textbf{sequentially (parallelly) $\mathcal{S}$-recognizable}, if $\rho(X)$ is sequentially (parallelly) regular.
\end{defn}

\noindent It is easy to check that for $\beta\in \R_{>1}$, sequential $\mathcal{S}_{\beta}$-recognizability corresponds to $\beta$-recognizability as defined in \cite{CLR}. It is important to point out that we restrict ourselves to subsets of the non-negative real numbers in order to keep the exposition as simple as possible. See for example \cite[p.91]{CLR} or \cite[Section II]{CSV} for ways of extending our encodings to negative numbers.

\begin{prop} Let $\mathcal{S}=(M,U,\rho)$ be a feasible numeration system, and let $X\subseteq \R_{\geq 0}^n$. Then the following are equivalent:
\begin{enumerate}
    \item $X$ is sequentially (parallelly) $\mathcal{S}$-recognizable,
    \item
    $
    \{(w_1,\dots,w_n)\in \mathcal{W}_M^n\ : \ ([w_1]_{\mc S},\dots,[w_n]_{\mc S})\in X\}$
    is sequentially (parallelly) regular.
\end{enumerate}
\end{prop}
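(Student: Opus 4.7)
The plan is to deduce both implications from two ingredients: (a) by feasibility, the $\mathcal{S}$-normalization map $\nu\colon w\mapsto \rho([w]_{\mc S})$ is sequentially regular (as a relation in $\mc W_M^2$); and (b) for both the sequential and the parallel encoding, regular sets are closed under boolean combinations, convolutions and projections. The closure properties in the sequential case are \cref{fact:buechi}; for parallel regularity, the same closure properties hold, since the parallel representation simply records a tuple of extended words as an $\omega$-word over a product alphabet, and these operations correspond to the standard ones on $\omega$-regular languages. Whenever a sequentially regular ingredient is needed in the parallel case, I will invoke \cref{prop:srimppr} to upgrade sequential to parallel regularity.

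For $(2)\Rightarrow(1)$: note that $w$ is $\mathcal{S}$-normalized iff $\nu(w)=w$, so the set of $\mathcal{S}$-normalized words is obtained from the graph of $\nu$ by intersection with the diagonal followed by projection, and is therefore regular in the relevant encoding. Now $\rho(X)$ is exactly the intersection of the set in (2) with the condition that each coordinate be $\mathcal{S}$-normalized, so regularity of $\rho(X)$ follows.

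For $(1)\Rightarrow(2)$: let $Y$ denote the set in (2). A tuple $(w_1,\dots,w_n)$ lies in $Y$ iff $(\nu(w_1),\dots,\nu(w_n))\in \rho(X)$, iff there exist $v_1,\dots,v_n\in\mc W_M$ with $v_i=\nu(w_i)$ for all $i$ and $(v_1,\dots,v_n)\in \rho(X)$. I will express $Y$ as the projection onto the first $n$ coordinates of the regular subset
\[
\bigcap_{i=1}^n \bigl\{(w_1,\dots,w_n,v_1,\dots,v_n)\in\mc W_M^{2n}:(w_i,v_i)\in \mathrm{graph}(\nu)\bigr\}\cap\bigl\{(w_1,\dots,w_n,v_1,\dots,v_n):(v_1,\dots,v_n)\in \rho(X)\bigr\}
\]
of $\mc W_M^{2n}$. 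By the closure properties listed above, $Y$ is regular in the same encoding as $\rho(X)$.

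The only (and minor) care is to handle both encodings uniformly: in the sequential case all ingredients are directly sequentially regular, while in the parallel case the graph of $\nu$ and the sets built from it must be treated as parallelly regular, which is supplied by \cref{prop:srimppr}. I do not anticipate any substantive obstacle beyond bookkeeping.
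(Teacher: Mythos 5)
Your proposal is correct and follows essentially the same route as the paper, whose one-line proof observes that the set in (2) is the preimage of $\rho(X)$ under the $\mathcal{S}$-normalization map; you simply spell out the graph-intersection-projection construction for the preimage, the converse via intersecting with the (regular) set of normalized words, and the upgrade from sequential to parallel regularity of normalization via \cref{prop:srimppr}.
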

\begin{proof}
    The stated set is the preimage of $\rho(X)$ under the $\mc S$-normalization map.
\end{proof}
\begin{lem}\label{addnormregular}
    Let $\mc S=(M,U,\rho)$ be a numeration system such that there is $k\ge0$ such that every word of the form $a_n0^k{a_{n-1}}0^k\cdots a_00^{k-\ell}\star 0^\ell a_{-1}0^ka_{-2}0^k\cdots$ with $a_i\in\set{0,1}$ and $0\le\ell\le k$ is $\mc S$-normalized. Then the following are equivalent:
    \begin{enumerate}
        \item the graph of addition $\{ (x,y,z) \in \R_{\geq0}^3 \ : \ x+y=z\}$ is sequentially $\mc S$-recognizable,
        \item $\mc S$-normalization is sequentially regular.
    \end{enumerate}
\end{lem}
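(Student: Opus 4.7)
The plan is to prove both implications via a common \emph{strand decomposition}: for any $v\in\mc W_N$ with $N\ge 1$, I define the $N(k+1)$ strands $v^{(j,r)}\in\mc W_1$ (for $1\le j\le N$ and $0\le r\le k$) by setting $v^{(j,r)}_i=1$ exactly when $v_i\ge j$ and $i\equiv r\pmod{k+1}$, so that $v=\sum_{j,r}v^{(j,r)}$ digit-wise. Each strand has its ones separated by at least $k$ zeros and matches the form prescribed by the hypothesis (with shift $\ell=k-r$), hence is $\mc S$-normalized. The decomposition map $v\mapsto(v^{(j,r)})_{j,r}$ is a finite-state transduction and thus sequentially regular.

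For the direction $(1)\Rightarrow(2)$, starting from $w\in\mc W_M$ the decomposition produces $M(k+1)$ normalized summands whose $\mc S$-values add to $[w]_{\mc S}$. Iterating the sequentially regular addition relation $M(k+1)-1$ times produces $\rho([w]_{\mc S})$, and closure of sequentially regular relations under composition (\cref{fact:buechi}) shows that $\mc S$-normalization is sequentially regular.

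For the direction $(2)\Rightarrow(1)$, the preceding proposition (whose proof uses only the regularity of normalization, which (2) provides) reduces the goal to showing that $\{(w_1,w_2,w_3)\in\mc W_M^3 : [w_1]_{\mc S}+[w_2]_{\mc S}=[w_3]_{\mc S}\}$ is sequentially regular. Using (2) I may further assume $w_1,w_2,w_3$ are $\mc S$-normalized, in which case the condition reads $\tilde\rho(w_1+w_2)=w_3$, where $w_1+w_2\in\mc W_{2M}$ is the digit-wise sum and $\tilde\rho\colon\mc W_{2M}\to\mc W_M$, $v\mapsto\rho([v]_{\mc S})$, denotes the extension of normalization to words with larger digit range. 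It therefore suffices to show that $\tilde\rho$ is sequentially regular. Applying the strand decomposition to $v\in\mc W_{2M}$ writes $v$ as a digit-wise sum of $2M(k+1)$ normalized sparse-binary words, and the plan is to compute $\tilde\rho(v)$ by combining these strands one at a time with a running $\mc W_M$-normalized partial sum.

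The hard part is carrying out this combination in a sequentially regular way without circularly invoking the addition relation. When a sparse-binary strand $s$ is added to the current partial sum $u\in\mc W_M$, the result $u+s$ lies in $\mc W_{M+1}$ and splits as $u+s=u'+s'$ with $u'_i:=\min(u_i+s_i,M)\in\mc W_M$ and $s'_i=1$ exactly when $u_i=M$ and $s_i=1$; since $s'$ inherits the sparse-binary pattern of $s$, it is again normalized by the hypothesis. Applying $\mc W_M$-normalization to $u'$ then produces the next partial sum, with $s'$ as a pending correction to fold in. The main obstacle is that this iteration need not terminate in finitely many rounds, and I plan to close this gap by amalgamating the finite state of the $\mc W_M$-normalization automaton with a finite register tracking the pending sparse-binary carry, thereby exhibiting $\tilde\rho$ as the output of a single sequentially regular transducer. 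The hypothesis that all sparse-binary patterns of spacing $k$ are simultaneously normalized is precisely what makes this amalgamation finite-state.
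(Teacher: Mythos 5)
Your direction (1)$\Rightarrow$(2) is correct and is essentially the paper's own argument: the paper also decomposes $w\in\mc W_M$ by a sequentially regular map into finitely many summands of the sparse form that the hypothesis guarantees to be normalized, and then recovers $\rho([w]_{\mc S})$ as the unique word completing these summands in the iterated addition relation, using the closure properties of \cref{fact:buechi}. Your explicit strands $v^{(j,r)}$ (with shift $\ell=k-r$) are a correct instantiation of the decomposition map whose existence the paper merely asserts.

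The problem is in (2)$\Rightarrow$(1). You reduce everything to the claim that the extended normalization $\tilde\rho\colon\mc W_{2M}\to\mc W_M$ is sequentially regular, and precisely there your text stops being a proof: after capping at $M$ and renormalizing, the new pending word $s'$ depends on where the renormalized partial sum has digit $M$, so fresh overflows can appear, and you yourself concede that the iteration ``need not terminate in finitely many rounds.'' The proposed repair --- ``amalgamating'' the normalization automaton with a ``finite register tracking the pending sparse-binary carry'' --- is not an argument. The pending carry is an infinite word, not a bounded register; sequential regularity of $\tilde\rho$ means a single Muller/B\"uchi automaton accepting the convolution of input and output, and nothing in your sketch shows how unboundedly many renormalization passes, each of which already uses the full normalization automaton, collapse into one such automaton. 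The closing claim that the hypothesis on sparse patterns ``is precisely what makes this amalgamation finite-state'' is asserted, not proved. For comparison, the paper handles this direction in one line: it forms the digit-wise sum $w_1+w_2$ (a word over the doubled digit alphabet) and applies $\mc S$-normalization to it, i.e., it reads (2) as licensing normalization of such sums, so that the implication is immediate from closure under composition; the sparse-word hypothesis is only needed for (1)$\Rightarrow$(2). Under your stricter reading, in which (2) gives regularity of normalization only on $\mc W_M$, the regularity of $\tilde\rho$ on $\mc W_{2M}$ is a genuine additional claim, and your proposal does not establish it --- as written, the second direction is a plan rather than a proof.
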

\begin{proof}
 Suppose that $\mc S$-normalization is sequentially regular. Then the graph of addition is sequentially $\mc S$-recognizable, as addition may be performed by first adding componentwise two elements in $\mc W_M$, which is clearly sequentally regular, and then applying $\mathcal{S}$-normalization.\newline

 \noindent Conversely, first note that
 \[
B := \{ (\rho(x_1),\dots,\rho(x_k),\rho(y)) \in \mathcal{W}_M^{k+1} \ : \ x_1,\dots,x_k,y\in \R_{\geq 0}, \ y = \sum_{i=1}^k x_i \}
 \]
is sequentially regular.  
Observe that there is a map $f=(f_1,\dots,f_k): \mathcal{W}_M \to \mathcal{W}_M^k$ such that for each $i=1,\dots,k$, and $w \in \mc{W}_m$
 \begin{itemize}
     \item the coordinate function $f_i$ is sequentially regular,
     \item $f_i(w)$ is  of the form stated in the assumption, and
     \item $\rho(w) = \sum_{i=1}^k \rho(f_i(w))$.
      \end{itemize}
Since $f_i(w)$ is $\mc S$-normalized for each $w \in \mc{W}_M$ and $i\in \{1,\dots,k\}$, the $\mc S$-normalization of $w$ is the unique $v \in \mc{W}_M$ such that $(\rho(f_1(w)),\dots,\rho(f_k(w)),v) \in B$. Since $B$ and the graph of $f$ are sequentially regular, it follows that $\mc S$-normalization is sequentially regular as well.
\end{proof}
\begin{cor}\label{addregular}
Let $\mc S=(M,U,\rho)$ be a feasible numeration system. Then
   the graph of addition is sequentially $\mc S$-recognizable.
\end{cor}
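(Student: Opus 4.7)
The strategy is to deduce the corollary from \cref{addnormregular}. Feasibility condition (1) directly provides the sequential regularity of $\mc S$-normalization, which is condition (2) of that lemma. Hence, once the hypothesis of \cref{addnormregular} is verified, its forward direction will immediately yield that the graph of addition is sequentially $\mc S$-recognizable. The entire work therefore lies in deriving the shifted sparse-word hypothesis of \cref{addnormregular} from the feasibility axioms.

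Feasibility (3) furnishes a constant $k_0\in\N$ such that every word of the form $a_n0^{k_0}a_{n-1}0^{k_0}\cdots a_00^{k_0}\star a_{-1}0^{k_0}a_{-2}0^{k_0}\cdots$ with $a_i\in\{0,1\}$ is $\mc S$-normalized; this is precisely the $\ell=0$ case of the hypothesis of \cref{addnormregular}. To address the remaining cases $1\le\ell\le k$, I would take $k=k_0$ in \cref{addnormregular} and verify directly that each shifted word is still $\mc S$-normalized. The key structural fact is that a shifted sparse $\{0,1\}$-word still has all of its nonzero entries lying in a single arithmetic progression of positions with common difference $k_0+1$; only the residue class (equivalently, the location of $\star$ relative to the progression) changes with $\ell$. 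Using the sequential regularity of $\rho(\R_{\ge0})\subseteq\mc W_M$ from feasibility (1) together with the order regularity from feasibility (2), one can check that the Büchi/Muller automaton recognizing the $\mc S$-normalized language accepts these shifted sparse inputs, since after a bounded initial segment it cycles through the same residue classes of positions as in feasibility (3), only shifted.

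The main obstacle is exactly this last verification: feasibility (3) is phrased only for the $\ell=0$ alignment, so the extension to all $\ell\in\{0,\dots,k\}$ is not a tautology and requires analyzing the automaton for $\mc S$-normalized words to see that sparseness of a $\{0,1\}$-pattern at spacing $k_0+1$ alone (independent of the residue class it occupies) rules out any forbidden local configuration. Once this is established, \cref{addnormregular} applies verbatim and yields the corollary, namely that $\{(x,y,z)\in\R_{\ge0}^3:x+y=z\}$ is sequentially $\mc S$-recognizable.
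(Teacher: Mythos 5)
Your instinct to deduce the corollary from \cref{addnormregular} via feasibility condition (1) is right, but you have misread where the sparse-word hypothesis of that lemma is actually used. In the proof of \cref{addnormregular}, the implication (2)$\Rightarrow$(1) (normalization sequentially regular implies addition sequentially recognizable) is a one-line argument: add the inputs componentwise, which is plainly sequentially regular, and then apply the $\mc S$-normalization map. The hypothesis about the words $a_n0^k\cdots a_00^{k-\ell}\star 0^\ell a_{-1}0^k\cdots$ enters only in the converse direction (1)$\Rightarrow$(2), where an arbitrary word is decomposed into sparse words that must be known to be normalized. Since \cref{addregular} needs only the direction (2)$\Rightarrow$(1), feasibility condition (1) already suffices and there is nothing further to verify; this is why the paper states the corollary without proof.

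The step you describe as ``the entire work''---upgrading feasibility (3), which covers only the $\ell=0$ alignment, to the all-$\ell$ hypothesis of \cref{addnormregular}---is therefore an unnecessary detour, and, as written, it is also a genuine gap: your sketched justification does not work. Sequential regularity of the set of normalized words (feasibility (1)) and of the order relation (feasibility (2)) only assert the existence of certain automata; they give no information about whether particular shifted sparse words are accepted, and ``all nonzero digits lie in a progression of spacing $k+1$'' is not a property a regular language must respect independently of the alignment with $\star$. Nothing in the definition of a feasible numeration system forces the shifted words to be normalized: $\rho$ is merely a right inverse of $[-]_U$ subject to conditions (1)--(3), and these do not pin down which representation of a given value is the normalized one at the shifted alignments. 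Since you explicitly condition your conclusion on establishing this extension, the proposal is incomplete. The repair is immediate: either invoke the forward direction of \cref{addnormregular}, whose proof never uses the sparse-word hypothesis, or simply repeat its argument (componentwise addition followed by $\mc S$-normalization), both of which rest only on feasibility (1).
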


\noindent We now ready to prove feasibility of Ostrowski numeration systems.

\begin{cor}
Let $\alpha \in \R$ be a quadratic irrational number. Then $\mathcal O_\alpha$ is feasible and $\N_{\mc O_\alpha}=\N$.
\end{cor}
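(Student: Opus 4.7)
The plan is to prove the two assertions separately. The equality $\N_{\mathcal{O}_\alpha} = \N$ is essentially a restatement of \cref{ostrowski}: the inclusion $\subseteq$ is immediate from the definition of $[\cdot]_{U^\alpha}$, since any extended word supported on nonnegative positions evaluates to a nonnegative integer combination of the $q_{i+1}$'s; conversely, \cref{ostrowski} provides every $N \in \N$ with a unique decomposition satisfying the normalization conditions of $\rho^\alpha$, so $\rho^\alpha(N)$ has zero fractional part.

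For feasibility I verify the three conditions. Condition (3) (sparse $0/1$-words are normalized) is easiest: I take any $k \geq 1$. In a word of the stated form, each nonzero digit is at least $k+1$ positions away from any other nonzero digit, so every upper bound on a digit is trivially met (the partial quotients satisfy $a_j \geq 1$ and the digits lie in $\{0,1\}$), and every implication ``digit equal to its bound forces an adjacent digit to vanish'' is nontrivial only when the relevant $a_j$ equals $1$, in which case sparseness forces the vanishing. The requirement that $b_{-k} < a_k$ for infinitely many odd $k$ is automatic since infinitely many fractional digits are $0$. Condition (2) (sequential regularity of comparison) reduces to \cref{real_ostrowski_order}, which describes the order on normalized words via a parity-dependent alternating lexicographic rule — plainly $\omega$-regular by a Büchi automaton tracking position parity and the first-difference state. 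For general (not necessarily normalized) inputs one pre-composes with the normalization map from condition (1).

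Condition (1) is the main content: $\mathcal{O}_\alpha$-normalization should be sequentially regular. My approach uses the recurrences $q_{k+1} = a_{k+1} q_k + q_{k-1}$ and $\beta_{k+1} = a_{k+1} \beta_k + \beta_{k-1}$ from \cref{fact:recursive} as local rewriting rules: any digit exceeding its bound, or any pattern ``$b_i = a_j$ with a nonzero neighbor'', is corrected by a bounded local carry or borrow. Because $\alpha$ is quadratic, Lagrange's theorem forces $(a_k)$ to be bounded and eventually cyclic with period $P(\alpha)$, so a finite-state transducer tracking its position modulo $P(\alpha)$, a bounded carry state, and whether it is currently processing the integer or the fractional part can implement normalization; the set of normalized outputs — characterized by the local constraints together with the parity condition on the fractional part — is clearly $\omega$-regular.

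The hardest step will be completing condition (1): carefully verifying that the carry/borrow process admits a genuinely finite-state implementation despite the two-sided infinite nature of extended words. Particular care is needed at the boundary between integer and fractional parts at $\star$ (where the allowable digit ranges differ) and when long cascades of carries arise from runs with $a_k = 1$. This kind of construction is standard in the theory of Ostrowski numeration systems and closely follows the strategy underlying prior work on $\mathcal{R}_\alpha$ cited in the paper.
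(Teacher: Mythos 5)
Your reductions for $\N_{\mc O_\alpha}=\N$ (via \cref{ostrowski}) and for the order relation (via \cref{real_ostrowski_order} composed with normalization) are fine and match the paper. The genuine gap is condition (1), the sequential regularity of $\mc O_\alpha$-normalization, which is exactly the substantive content of the corollary and which you only sketch: you assert that the recurrences of \cref{fact:recursive} can be implemented as "bounded local carries or borrows" by a transducer tracking the position modulo $P(\alpha)$, and then explicitly defer "the hardest step". That step is not a routine verification. Carries in Ostrowski systems can cascade unboundedly in one direction (the analogue of rewriting $0.999\cdots$ as $1.000\cdots$), can be triggered arbitrarily far from where they are resolved, and interact with the change of digit bounds at the radix point, so the claim that a local rewriting system is realized by a finite-state machine on extended $\omega$-words is precisely what has to be proved, not assumed. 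The paper does not attempt this construction at all: it invokes \cref{addnormregular}, which converts sequential $\mc O_\alpha$-recognizability of the graph of addition into sequential regularity of normalization, and imports the recognizability of addition from \cite[Lemma 3.15]{H-Twosubgroups}. If you do not want to cite that result, you must actually carry out (or cite) a normalization-transducer construction, which is a nontrivial theorem in its own right; as written, your argument has a hole exactly where the work is.

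Your verification of condition (3) is also too quick. You claim that every $k\ge1$ works because the digit bounds are "trivially met", but the conditions defining $\rho^\alpha$ include strict bounds at the positions adjacent to $\star$ (which a digit $1$ does not trivially satisfy when $a_1=1$) and the requirement that $b_{-k}<a_k$ for infinitely many odd $k$. The latter is parity-sensitive: in a sparse word the nonzero fractional digits occur with fixed spacing $k+1$, so when $k+1$ is even they all sit at positions of one parity, and "infinitely many fractional digits are $0$" does not by itself yield non-maximal digits at infinitely many \emph{odd} positions (consider partial quotients all equal to $1$). So the correct assertion is that a suitably chosen sparse pattern is normalized — which is all the definition of feasibility and the paper's proof require — not that every $k\ge1$ works; your justification as stated does not establish either the strict boundary bounds or the parity condition.
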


\begin{proof}

  Clearly $\N_{\mc O_\alpha}=\N$ and the order relation is sequentially $\mathcal O_{\alpha}$-recognizable by \cref{real_ostrowski_order}. Furthermore, each $1010 \cdots\star10\cdots$ is a normalized $\mc O_\alpha$-representation. Thus sequential regularity of normalization by \cref{addnormregular} follows from sequential $\mc O_{\alpha}$-recognizability of the graph of addition. This is \cite[Lemma 3.15]{H-Twosubgroups}.
\end{proof}

\begin{lem}\label{gammpowreg}
Let $\gamma$ be a Pisot number and let $n\in \N$. Then a set is sequentially (parallelly) $\mathcal{S}_{\gamma}$-recognizable if and only if it is sequentially (parallelly) $\mathcal{S}_{\gamma^n}$-recognizable.
\end{lem}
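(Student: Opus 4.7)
The plan is first to verify that $\gamma^n$ is again a Pisot number: its Galois conjugates are the $n$-th powers of those of $\gamma$, hence of absolute value strictly less than one, while $\gamma^n>1$ is an algebraic integer. Thus by \cref{gammafeas} both $\mathcal{S}_\gamma$ and $\mathcal{S}_{\gamma^n}$ are feasible. The heart of the argument is to show that the conversion relation
\[
R = \{(v,w)\in\mathcal{W}_{M_{\gamma^n}}\times\mathcal{W}_{M_\gamma} : [v]_{\mathcal{S}_{\gamma^n}} = [w]_{\mathcal{S}_\gamma}\}
\]
is sequentially regular; its parallel regularity then follows from \cref{prop:srimppr}.

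To analyze $R$, I factor through the auxiliary word $v'\in\mathcal{W}_{M_{\gamma^n}}$ obtained from $v$ by padding: set $v'_i = v_{i/n}$ if $n\mid i$ and $v'_i=0$ otherwise, so that $[v']_{\mathcal{S}_\gamma} = \sum_k v_k\gamma^{nk} = [v]_{\mathcal{S}_{\gamma^n}}$. The map $v\mapsto v'$ is sequentially regular, realized by a transducer that cycles through $n$ states, outputting the input digit in state $0$ and a $0$ otherwise, applied on each side of the radix point. It then suffices to show that
\[
E := \{(v',w)\in\mathcal{W}_{M_{\gamma^n}}\times\mathcal{W}_{M_\gamma} : [v']_{\mathcal{S}_\gamma} = [w]_{\mathcal{S}_\gamma}\}
\]
is sequentially regular.

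The main obstacle is that the feasibility machinery for $\mathcal{S}_\gamma$ handles inputs in $\mathcal{W}_{M_\gamma}$, whereas $v'$ may carry digits exceeding $M_\gamma$. I bridge this by decomposing each digit of $v'$ greedily as a sum of $r := \ceil{M_{\gamma^n}/M_\gamma}$ summands drawn from $\{0,\dots,M_\gamma\}$; position-wise this yields a sequentially regular map $v'\mapsto (v^{(1)},\dots,v^{(r)})\in \mathcal{W}_{M_\gamma}^r$ with $\sum_i v^{(i)}=v'$ digit-by-digit. Then $(v',w)\in E$ if and only if $[v^{(1)}]_{\mathcal{S}_\gamma} + \dots + [v^{(r)}]_{\mathcal{S}_\gamma} = [w]_{\mathcal{S}_\gamma}$, which is sequentially regular by $r-1$ applications of \cref{addregular}.

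With $R$ in hand, the lemma follows: for $X\subseteq\R_{\ge0}^d$,
\[
\rho_\gamma(X) = \{w \in \mathcal{W}_{M_\gamma}^d : w \text{ is $\mathcal{S}_\gamma$-normalized, and } \exists v\in\rho_{\gamma^n}(X) \text{ with } (v_j,w_j)\in R \text{ for all } j\},
\]
and symmetrically in the reverse direction. The conjunction with normalizedness (regular by feasibility of $\mathcal{S}_\gamma$) and the projection over $v$ both preserve regularity of either flavor by \cref{fact:buechi}, which yields the desired equivalence of sequential and of parallel $\mathcal{S}_\gamma$- and $\mathcal{S}_{\gamma^n}$-recognizability.
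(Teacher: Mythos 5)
There is a genuine gap, and it sits at the heart of your argument: the padding map $v\mapsto v'$ is not sequentially regular in the sense of this paper, and in fact your conversion relation $R$ is not $\omega$-regular at all. Sequential regularity of a relation means $\omega$-regularity of the \emph{synchronous} convolution, in which the two extended words are aligned at the radix point and one digit of each is read per step. The transducer you describe outputs $n$ letters per input letter, i.e.\ it is a length-multiplying transduction; its graph must match the digit of $v$ at index $i$ with the digit of $v'$ at index $ni$, and the gap $(n-1)\,i$ is unbounded, so no synchronous automaton can check it. Worse, $R$ itself fails to be $\omega$-regular: take $\gamma=2$, $n=2$, and for $u\in\{0,1\}^m$ consider $v_u:=0\star u\,0^\omega$ read in base $4$; the base-$2$ words of the same value are $0\star 0u_10u_2\cdots 0u_m0^\omega$ (up to the usual ambiguity). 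After reading the first $m$ aligned fractional digit pairs, the automaton has seen all of $v_u$ but only the half of the base-$2$ word encoding $u_1,\dots,u_{\lceil m/2\rceil}$; the admissible continuations still encode the second half of $u$, so words $u\neq u'$ sharing their first half yield distinct left quotients of $R$ (appending $u$'s continuation after $u'$'s prefix produces a pair with unequal values). This gives at least $2^{\lfloor m/2\rfloor}$ distinct quotients, whereas an $\omega$-regular language, being accepted by a total deterministic Muller automaton, has only finitely many. The same counting works for irrational Pisot $\gamma$ and for the parallel encoding, since fractional digits are still consumed synchronously. Consequently your final step --- conjoining with normalizedness and projecting out $v$ via \cref{fact:buechi} --- has no regular relation to apply it to. (Your auxiliary claim that $E$ is sequentially regular, via digit-splitting and iterated use of \cref{addregular}, is fine; regularity is lost exactly at the dilation $v\mapsto v'$.)

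The repair is to avoid encoding the change of base as a relation between two words read in parallel. An $\mc S_{\gamma^n}$-representation corresponds to the (not necessarily normalized, larger-alphabet) $\mc S_\gamma$-representation supported on indices divisible by $n$, and this correspondence is the letter-to-block substitution $a\mapsto a0^{n-1}$ on the integral side and $a\mapsto 0^{n-1}a$ on the fractional side. Regular $\omega$-languages are closed under images and inverse images of such substitutions: one transforms the \emph{automaton}, simulating $n$ transitions per input letter (or guessing and checking the position modulo $n$), rather than convolving two words. Combining this with the regularity of $\mc S_\gamma$-normalization to pass back and forth between normalized representations gives the lemma; this is the route the paper takes.
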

\begin{proof}
  Note that $\mc S_{\gamma^n}$-representations correspond to $\mc S_\gamma$-representations (in either case not necessarily normalized) which are zero at every index not divisible by $n$. Thus the claim follows from the regularity of normalization.
\end{proof}
\begin{lem}\label{gammultper}
  Let $\gamma\in\R_{>1}$ and let $w\in\bigcup_M\mc W_M$. If $w$ is ultimately periodic, then $[w]_{\mc S_\gamma}\in\Q(\gamma)$.
\end{lem}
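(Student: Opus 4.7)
The plan is to exhibit $[w]_{\mc S_\gamma}$ as an explicit element of $\Q(\gamma)$ by splitting $w$ according to its ultimate periodicity structure. Writing $w = x_N x_{N-1}\cdots x_0 \star x_{-1}x_{-2}\cdots$ with $x_i \in \N$, the integral part $\sum_{i=0}^N x_i \gamma^i$ is a finite $\Z$-linear combination of powers of $\gamma$, so it lies in $\Z[\gamma]\subseteq\Q(\gamma)$. Ultimate periodicity of $w$ as an $\omega$-word therefore amounts to ultimate periodicity of the fractional part: there exist $N_0\ge 1$ and a period $p\ge 1$ such that $x_{i-p} = x_i$ for all $i\le -N_0$.

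Next I would peel off the preperiodic portion $\sum_{i=-N_0+1}^{-1} x_i\gamma^i$, which is again a finite $\Z$-linear combination of powers of $\gamma$, hence in $\Q(\gamma)$. For the periodic tail I would reindex by writing each $i\le -N_0$ uniquely as $i = -N_0 - k - jp$ with $0\le k<p$ and $j\ge 0$, then apply periodicity $x_i = x_{-N_0-k}$ and sum the resulting geometric series in $\gamma^{-p}$ (which converges absolutely since $\gamma>1$):
\[
\sum_{i\le -N_0} x_i \gamma^i \;=\; \sum_{k=0}^{p-1} x_{-N_0-k}\gamma^{-N_0-k}\sum_{j=0}^{\infty}\gamma^{-jp} \;=\; \frac{1}{1-\gamma^{-p}}\sum_{k=0}^{p-1} x_{-N_0-k}\gamma^{-N_0-k}.
\]
Since $\gamma>1$, the denominator $1-\gamma^{-p}$ is a nonzero element of $\Q(\gamma)$, and the numerator lies in $\Z[\gamma,\gamma^{-1}]\subseteq\Q(\gamma)$; hence the tail contributes an element of $\Q(\gamma)$ as well.

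Summing the three contributions (integral part, preperiodic part, periodic tail) yields $[w]_{\mc S_\gamma}\in\Q(\gamma)$. There is no real obstacle here — the argument is a direct computation with a geometric series, and the only care needed lies in the index bookkeeping that separates the finite integral part, the finite preperiod, and the genuinely periodic tail.
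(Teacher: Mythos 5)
Your proposal is correct and follows essentially the same route as the paper: decompose the ultimately periodic word into a terminating part (a finite $\Z$-linear combination of powers of $\gamma$) plus a purely periodic tail, and evaluate the tail via the geometric series in $\gamma^{-p}$, giving the factor $\frac{1}{1-\gamma^{-p}}\in\Q(\gamma)$. The extra index bookkeeping you carry out is just a more explicit version of the paper's reduction to $w=v^\omega$.
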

\begin{proof}
  If $w=v0^\omega$ for $N\in\N,v\in\set{0,\dots,N,\star }^*$, this follows from $\gamma\in\Q(\gamma)$. Hence it remains to consider $w=v^\omega$. Then 
  \[
  [w]_{\mc S_\gamma}=[v0^\omega]_{\mc S_\gamma}\sum_{i=0}^\infty\gamma^{-i\abs v}=\frac{[v0^\omega]_{\mc S_\gamma}}{1-\gamma^{-\abs v}}\in\Q(\gamma).\qedhere
  \]
\end{proof}

\section{Recognizability and Definability}

Let $\mathcal{S}=(M,U,\rho)$ be a numeration system. In the following, we abuse notation and also use $U$ to denote the set $\{ U_i \ : \ i\in \Z\}$. For $i\in \N$, we define a binary relation $V_i$ on $\R$ such that for all $x,y\in \R$ 
\[
V_i(x,y) \text{ if and only if there is $j\in \Z$ such that $y=U_j$ and $i=\rho(x)_j$.}
\]
 Set  
\[
\mathcal{R}_{\mathcal{S}}:=(\R,{<},+,U_0,(V_i)_{i\in \N}), \quad
\mathcal{N}_{\mathcal{S}}:=(\N_{\mathcal{S}},{<},+,U_0,(V_i)_{i\in \N}).
\]
Let $\tau: U \to U$ be the map sending $U_i$ to $U_{-i}$ for each $i\in \Z$. We let $\mathcal{R}_{\mathcal{S}}^+$ be $(\mathcal{R}_{\mathcal{S}},\tau)$.

\begin{prop}\label{automatadef}
 The sets $U$ and $\N_{\mathcal{S}}$ are definable in $\mathcal{R}_{\mathcal{S}}$.
 \end{prop}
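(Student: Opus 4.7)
The plan is to exhibit explicit first-order formulas in the signature of $\mathcal{R}_{\mathcal{S}}$ defining $U$ and $\N_{\mathcal{S}}$, working directly from the definitions of the relations $V_i$.

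For $U$, I would take the formula $\phi_U(y) := \exists x\,\bigvee_{i=0}^M V_i(x,y)$. The ``$\Leftarrow$'' direction is immediate from the definition of $V_i$: whenever $V_i(x,y)$ holds, we must have $y=U_j$ for some $j \in \Z$, so $y \in U$. For ``$\Rightarrow$'', given $y=U_j \in U$, take $x = 0$ (the additive identity, clearly definable from $<$ and $+$). Then $\rho(0) \in \mc W_M$, so its $j$-th digit $i := \rho(0)_j$ lies in $\{0,\dots,M\}$, giving $V_i(0,y)$. The disjunction is finite (over $i \in \{0,\dots,M\}$), so $\phi_U$ is a legitimate first-order formula in the signature, even though that signature contains $V_i$ for all $i \in \N$.

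For $\N_{\mathcal{S}}$, I first unwind the definition to see that it is exactly the set of $x \in \R_{\geq 0}$ with $\rho(x)_j = 0$ for all $j < 0$; the finiteness of the integer part is automatic from $\rho(x) \in \mc W_M$. The key observation is that the strict monotonicity $|U_i| < |U_j|$ for $i<j$, together with $U_0 \neq 0$ (forced because $|U_{-1}| < |U_0|$), lets one single out the negative-index elements of $U$ purely by absolute value: $y = U_j \in U$ satisfies $j<0$ iff $|y|<|U_0|$. The relation $|y|<|U_0|$ is in turn definable in $(\R,<,+,U_0)$ by splitting on the sign of $U_0$, for instance as $(-U_0 < y \land y < U_0) \lor (U_0 < y \land y < -U_0)$. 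I would then define $\N_{\mathcal{S}}$ by
\[
  x \geq 0 \,\land\, \forall y\,\bigl(\phi_U(y) \land |y|<|U_0| \to V_0(x,y)\bigr),
\]
which asserts precisely that $\rho(x)_j = 0$ for every $j < 0$.

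I do not anticipate any genuine obstacle here: the argument is essentially a direct translation from the definitions of $V_i$, of $U$, and of $\N_{\mathcal{S}}$. The only minor points worth verifying are that a finite disjunction over $i \in \{0,\dots,M\}$ really suffices despite the signature containing $V_i$ for all $i \in \N$, and that one handles a possibly negative $U_0$ correctly when expressing $|y|<|U_0|$.
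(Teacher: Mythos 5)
Your proposal is correct and follows essentially the same route as the paper: $U$ is defined by a finite disjunction of the $V_i$ (the paper simply hardcodes the first argument as $0$, which is exactly the witness you use for your existential quantifier), and $\N_{\mathcal{S}}$ is defined by demanding $V_0(x,u)$ for every $u\in U$ with $\abs u<\abs{U_0}$, which by the strict monotonicity of $\abs{U_i}$ picks out precisely the negative-index elements of $U$. Your extra care about the sign of $U_0$ and the explicit $x\ge 0$ are harmless refinements of the same argument.
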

\begin{proof}
Check that $U$ is definable by $V_0(0,x)\lor V_1(0,x) \lor\dots\lor V_M(0,x)$.
We can define 
$\N_{\mathcal{S}}$ using 
\[\forall u\ (u\in U\land\abs u<\abs{U_0})\rightarrow V_0(x,u).\qedhere\]
\end{proof}

\noindent We will now establish two results connecting recognizability and definability. The arguments follow the proof of \cite[Theorem 6.1]{BHMV} using also ideas from the proof of \cite[Theorem 16]{BH-Betrand}.

\begin{thm}\label{thm:recdef}
  Suppose $\mathcal{S}$ is feasible. Let $X\subseteq \R_{\geq 0}^n$. Then
 \begin{enumerate}
  \item $X$ is sequentially $\mathcal{S}$-recognizable if and only if $X$ is definable in $\mathcal{R}_{\mathcal{S}}$.\label{feasseqreg}
  \item $X$ is parallelly $\mathcal{S}$-recognizable if and only if $X$ is definable in $\mathcal{R}_{\mathcal{S}}^+$.\label{feasparreg}
 \end{enumerate}
Moreover, if $X$ is bounded, then the conditions of (1) and (2) are equivalent.\\
For $X\subseteq\N_{\mathcal{S}}^n$ the following conditions are equivalent:
    \begin{enumerate}
    \item[(a)] $X$ is sequentially $\mathcal{S}$-recognizable.
    \item[(b)] $X$ is definable in $\mathcal{N}_\mathcal{S}$.
    \item[(c)] $X$ is parallelly $\mathcal{S}$-recognizable.
    \end{enumerate}
\end{thm}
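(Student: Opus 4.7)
The plan is to prove the equivalences in the standard Büchi--Bruyère pattern: structural induction on formulas in one direction, and a direct automaton-to-formula construction in the other. I would handle the sequential case (1) as the main case; (2) is completely analogous after using $\tau$ to pair position $U_j$ with $U_{-j-1}$ exactly as the parallel encoding does, and the equivalences for $X\subseteq\N_{\mathcal{S}}^n$ reduce to (1) and (2) via \cref{automatadef} together with the observation that on $\N_{\mathcal{S}}$ every normalized representation has vanishing fractional part, so sequential and parallel encodings coincide up to bookkeeping.

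For the direction \emph{definability implies recognizability}, I would argue by structural induction on $\mathcal{R}_{\mathcal{S}}$-formulas. For the atomic cases: the order is sequentially recognizable by feasibility condition (2); the graph of addition by \cref{addregular}; the constant $U_0$ trivially; and each $V_i$ via a direct automaton that scans the second argument to locate the unique position $U_j$ and checks that the $j$-th digit of the first argument equals $i$. For $\mathcal{R}_{\mathcal{S}}^+$, the map $\tau$ is immediately parallelly recognizable because its pairing of positions $U_j$ and $U_{-j-1}$ is exactly what parallel representation reads in a single symbol. The inductive step uses closure of $\omega$-regular languages under boolean combinations, convolutions, and projections (\cref{fact:buechi}); existential quantification over $\R_{\ge 0}$ becomes Büchi projection on $\mathcal{W}_M$-representations, using that every non-negative real admits a representation in $\mathcal{W}_M$.

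For \emph{recognizability implies definability}, given a Büchi automaton $\mathcal{A}=(Q,\Sigma,T,I,F)$ accepting $\rho(X)$, I would build a formula $\phi(x_1,\ldots,x_n)$ in $\mathcal{R}_{\mathcal{S}}$ that existentially quantifies over auxiliary reals $z_1,\ldots,z_{|Q|}$ encoding an accepting run: the digit of $z_s$ at certain ``state positions'' indicates whether the run is in state $s$ there. To guarantee that such $z_s$ are actually $\mathcal{S}$-normalized, I would first pass to an equivalent automaton reading each symbol followed by $k$ padding zeros (where $k$ is the constant from feasibility condition (3)), so that state-encoding digits sit on the spread-out positions where arbitrary $\{0,1\}$-patterns are normalized. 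The formula then asserts: each $z_s$ has only digits in $\{0,1\}$; the $z_s$ partition the state positions; the first nonzero state position carries a state in $I$; consecutive state positions are linked by a transition in $T$ with input digit read via the $V_i$'s from $x_1,\ldots,x_n$; the $\star$-transition is respected at $U_0$; and the Büchi acceptance condition is expressed as an ``infinitely often'' statement via alternating first-order quantification over $\{u\in U:\abs{u}<\abs{U_0}\}$.

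The main technical obstacle will be this automaton-to-formula construction: managing the spread-out encoding so that state-tracking variables are genuinely $\mathcal{S}$-normalized, aligning state positions with input-digit positions consistently across the radix point $\star$, and formalizing Büchi acceptance in first-order. A secondary point is the bounded-case equivalence between sequential and parallel recognizability: \cref{prop:srimppr} handles one direction, while the converse on $X\subseteq[0,B]^n$ uses that integer parts have bounded length, so a parallel automaton can be ``unzipped'' into a sequential one by deterministically processing the finite integer block first and the fractional tail afterwards. Together with (1) and (2), this yields the claimed equivalence of all four conditions on bounded sets.
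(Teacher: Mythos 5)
Your plan for (1), (2) and the bounded case is essentially the paper's argument: definability implies recognizability via closure of $\omega$-regular languages under Boolean operations and projection together with recognizability of $<$ (feasibility), $+$ (\cref{addregular}), $U_0$, the $V_i$ and $\tau$; and recognizability implies definability by existentially quantifying a $Q$-indexed tuple of reals whose spread-out $\{0,1\}$ digit patterns are $\mathcal S$-normalized thanks to feasibility condition (3), with the acceptance condition phrased as ``indicator digits at arbitrarily small elements of $U$''. One detail you should make precise (the paper is terse here as well): you cannot ``pad the input'' -- the digits of $\rho(x_1),\dots,\rho(x_n)$ occupy every position, while the state indicators must be $k+1$ positions apart to be normalized. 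So the transition clause has to compare the states at consecutive indicator positions $u$ and $\sigma^{-k-1}u$ (where $\sigma$ is the definable successor on $U$) by asserting the existence of a length-$(k{+}1)$ path of $\mathcal A$ reading the $k+1$ input digits at $u,\sigma^{-1}u,\dots,\sigma^{-k}u$ (a finite disjunction expressible with the $V_i$), and acceptance becomes ``infinitely many such blocks contain a state of $F$''. As literally described, your single transition in $T$ with a single input digit per pair of consecutive state positions would only certify acceptance of the subsampled word.

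The genuine gap is your treatment of the equivalence (a)--(c) for $X\subseteq\N_{\mathcal{S}}^n$. Condition (b) is definability in $\mathcal{N}_{\mathcal{S}}$, whose universe is $\N_{\mathcal{S}}$, and this does not ``reduce to (1) and (2) via \cref{automatadef}'': definability of $\N_{\mathcal{S}}$ in $\mathcal{R}_{\mathcal{S}}$ lets you relativize an $\mathcal{N}_{\mathcal{S}}$-formula to get (b)$\Rightarrow$(a), but not conversely, since a formula of $\mathcal{R}_{\mathcal{S}}$ or $\mathcal{R}^+_{\mathcal{S}}$ quantifies over all of $\R$ and there is no automatic way to trade those quantifiers for quantifiers over $\N_{\mathcal{S}}$ -- that trade is exactly what is being proved. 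The paper establishes the missing direction (c)$\Rightarrow$(b) by repeating the construction of (2) with the witnesses taken inside $\N_{\mathcal{S}}$: for $x\in\N_{\mathcal{S}}^n$ the fractional digits vanish, so all occurrences of $V_i$ applied to $\tau u$ for $u$ of positive index can be replaced by $0$, and the run-encoding words can be chosen purely integral and spread out, hence normalized and representing elements of $\N_{\mathcal{S}}$ (with the tail of the run over the all-zero part of the input absorbed into a finite condition). Your sketch needs this additional argument; without it the claimed equivalence with (b) is not established.
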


\begin{proof}
    We first explain how definability implies recognizability. Since $\omega$-regular languages are closed under union, complementation and projection by \cref{fact:buechi}, it suffices to show that $+,<,U_0,V_i$ are sequentially $\mathcal{S}$-recognizable and $\tau$ is parallelly $\mathcal{S}$-recognizable. It is easy to see that $U_0,V_i,\tau$ are sequentially $\mathcal{S}$-recognizable. The order $<$ is sequentially $\mathcal{S}$-recognizable by feasibility of $\mathcal{S}$. Finally, addition is sequentially $\mathcal{S}$-recognizable by \cref{addregular}.\\
 
  \noindent We now consider (1). Let $X\subseteq \R_{\geq 0}^n$ be sequentially $\mathcal{S}$-recognizable. Let 
  \[\mathcal{A}=(Q,\Sigma_M^n \cup\{\star \},T,\set{p},F)\]
  be a Büchi automaton recognizing the sequential representation of $\rho(X)$. Since $\mathcal{S}$ is feasible, there is a $k\in \N$ such that every word of the form $a_n0^k\cdots a_00^k\star a_{-1}0^k\cdots$ is $\mathcal{S}$-normalized. \newline
  
  \noindent Let $w_1,\dots,w_n\in \mathcal{W}_M$ and let $N\in\N$ be such that $w_{i,j}=0$ for $i=1,\dots,n$ and $j\in \N_{>N}$. Set
  \[
 w:= ((w_{1,N},\dots,w_{n,N})\cdots (w_{1,0},\dots,w_{n,0})\star (w_{1,-1},\dots,w_{n,-1})\cdots). 
  \]
    Then the following are equivalent:
  \begin{itemize}
      \item $([w_1]_{\mc S},\dots,[w_n]_{\mc S})\in X$,
      \item  $\mathcal{A}$ accepts $w$,
    \item there is a sequence $(q_i)_{i\in \Z_{\leq N}}$ of states such that
  \begin{itemize}
  \item $q_{N}=p$,
  \item there is $q\in F$ such that $\{i \ : \ q_i=q\}$ is infinite,
  \item $(q_{i},(w_{1,i},\dots,w_{n,i}),q_{i-1}) \in T$ for every $i \in \Z_{\leq N}$.
  \end{itemize}
  \end{itemize}
  We encode the sequence $(q_i)_{-\infty<i\le N}$ by a $Q$-indexed tuple of extended $\omega$-words whose $q$-th entry is
 \begin{equation}\label{eq:theoremdefrec}\tag{$\ast$}
   \delta_{q,q_N}0^{k}\delta_{q,q_{N-1}}0^{k}\cdots\delta_{q,q_0}0^{k}\star \delta_{q,q_{-1}}0^{k}\cdots, 
 \end{equation} 
  where $\delta$ is the Kronecker-$\delta$. These words are $\mc S$-normalized by definition of $k$. Thus the existence of such a sequence of states can be expressed by the formula stating the existence of a $Q$-tuple $Z=(Z_q)_{q\in Q}\in\R^Q$ of real numbers encoding a valid sequence of states in this way and satisfying the above conditions. It remains to exhibit formulas for this.\newline

  \noindent The successor function $\sigma\colon U \to U$ mapping $U_i$ to $U_{i+1}$ is definable in $\mathcal{R}_{\mathcal{S}}$, since $|U_i|<|U_j|$ for all $i,j \in \Z$ with $i<j$. Clearly there are formulas $\phi(u,z)$, $\psi(u,z)$  and $(\psi_q(u,z))_{q\in Q}$ in the signature of $\mc R_{\mc S}$ such that for all $j\in \Z$, $q'\in Q$ and $Z=(Z_q)_{q\in Q}\in\R^Q$
\begin{itemize}
 \item $\mc R_{\mc S}\models \forall u \forall z (\phi(u,z)\lor\psi(u,z))\rightarrow u\in U$.
 \item $\mc R_{\mc S}\models \phi(U_j,Z)$ if and only if for all $q\in Q \ \rho(Z_q)_j = 0$
    \item $\mc R_{\mc S}\models \psi_{q'}(U_j,Z)$ if and only if for all $q\in Q$
   \[
   \rho(Z_{q})_j =    \begin{cases}
       1 & \text{ if } q=q',\\
       0 & \text{ otherwise}.
   \end{cases}
   \]
\end{itemize}
Let $\psi(u,z)$ be the formula $\bigvee_{q\in Q}\psi_q(u,z)$, and set
\begin{align*}
\chi_1(z)&:=\forall u\Big(\psi(u,z)\to\big(\phi(\sigma^{-1}u,z)\land\phi(\sigma^{-2}u,z)\land\dots\land\phi(\sigma^{-k}u,z)\land\psi(\sigma^{-k-1}u,z)\big)\Big),\\    
\chi_2(z)&:=\exists u\big(\psi(u,z)\land\forall u'(\abs {u'}>\abs u\land u'\in U\to\phi(u',z)\big),\\
\chi_3(z)&:=\psi(\sigma^{-1}U_0,z).
\end{align*}
The reader can now easily check that for every $Z=(Z_q)_{q\in Q}\in\R^Q$ the following are equivalent:
\begin{itemize}
    \item $\mc R_{\mc S}\models\chi_1(Z)\land\chi_2(Z)\land\chi_3(Z)$,
    \item there is a sequence $(q_i)_{i\in \Z_{\leq N}}$ such that $Z_q$ is of the form \eqref{eq:theoremdefrec} for every  $q\in Q$.
\end{itemize}
 Using $V_i$ and $\delta$, one can also easily constructs a formula $\eta_{r,s}(u,v_1,\dots,v_n)$ in the signature of $\mc R_{\mc S}$ such that 
 \[
 \mc R_{\mc S}\models\eta_{r,s}(U_j,[w_1]_{\mc S},\dots,[w_n]_{\mc S}) \text{ if and only if } (r,(w_{1,j},\dots,w_{n,j}),s)\in T.
 \]
 Using these observations one can verify that $\mc A$ accepts $w$ if and only if
  \begin{align*}
    \mc R_{\mc S}\models&\exists z(\chi_1(z)\land\chi_2(z)\land\chi_3(z)\\
    \land&\exists u\big(\psi_p(u,z)\land\forall u'(\abs {u'}>\abs u\land u'\in U\to\phi(u',z)\big)\\
    \land&\bigvee_{q\in F}\forall \eps>0\exists u(\abs u<\eps\land\psi_q(u,z))\\
    \land&\bigwedge_{r,s\in Q}\forall u(\psi_r(u,z)\land\psi_s(\sigma^{-k-1}u,z)\to\eta_{r,s}(u,[w_1]_{\mc S},\dots,[w_n]_{\mc S})).
    \end{align*}
This completes the proof of (1).\newline

\noindent  For (2) we only need to modify the construction for (1): We have $N=-1$, which will only simplify the formula. However, for checking the transition relation, we need not only access to $V_i$ of $U_j$ and the input, but also of $V_i$ of $U_{-j}$ and the input. This can be achieved using $\tau$. We leave the details for the reader to check.\newline
  
    \noindent To show that (1) implies (2) for bounded $X$, we may reduce to the case where the integer part of each member of $X$ is zero. Then the graph of the map sending the sequential representation to the parallel representation is clearly regular.\newline
  
  \noindent We now show the equivalence of (a), (b) and (c). We already know (b)$\implies$(a)$\implies$(c). It remains to show that parallelly $\mathcal{S}$-recognizable subsets of $\N^n_{\mathcal{S}}$ are $\mc N_{\mc S}$-definable. This can be done by the same argument as for (2). One only needs to observe that $V_i(\tau u,x)$ is constantly $0$, when $x\in \N_{\mc S}$ and $u \in \set{ U_i : i > 0}$. Thus in the formula in (2) we can replace such occurrences of $V_i$ by $0$.
\end{proof}

\begin{cor}
Suppose $\mc S$ is feasible. Then the first-order theory of $\mc R^+_{\mc S}$ is decidable.
\end{cor}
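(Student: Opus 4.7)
The plan is to effectivize the ``definability implies recognizability'' direction of \cref{thm:recdef}, turning every formula of $\mc R^+_{\mc S}$ into a Büchi automaton for the parallel representation of the set it defines. Since a sentence $\phi$ has no free variables, the resulting automaton $\mc A_\phi$ will live over a singleton alphabet, and $\phi$ will hold in $\mc R^+_{\mc S}$ precisely when $L(\mc A_\phi)\neq\emptyset$. Decidability then reduces to the standard emptiness check for Büchi automata, which proceeds by searching for a reachable strongly connected component that meets the acceptance set.

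To carry this out, I would proceed by induction on formula complexity. For atomic formulas the base cases are exactly those identified at the start of the proof of \cref{thm:recdef}: automata for $U_0$, the $V_i$, and $\tau$ are trivially constructible; the order $<$ is sequentially regular by feasibility of $\mc S$; and addition is sequentially regular by \cref{addregular}. Applying \cref{prop:srimppr} yields the corresponding parallel automata. For compound formulas, I would invoke the effective closure of the class of $\omega$-regular languages under Boolean combinations, convolutions, and projections (\cref{fact:buechi}), which supplies the constructions for $\neg\psi$, $\psi\land\chi$, and $\exists x\,\psi$ needed to climb up the formula tree. All of these closure constructions, together with the emptiness test, are classical effective procedures.

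The main obstacle is one of effectivity rather than mathematical content: the definition of feasibility only asserts the \emph{existence} of the automata witnessing sequential regularity of $<$ and of normalization, not their computability. For the statement to be algorithmically meaningful one must therefore assume that these witnessing automata are given as part of the input. This assumption is satisfied in all concrete instances considered in the paper, namely Pisot power numeration systems (\cref{gammafeas}) and Ostrowski numeration systems for quadratic irrationals, for which the normalization automaton is explicitly constructed in the literature referenced in the preceding section. Once effective automata are in hand for the base cases, the inductive construction is effective, and the standard emptiness test for Büchi automata completes the argument.
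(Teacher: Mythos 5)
Your proposal is essentially the paper's own argument: the paper likewise observes that the formula-to-automaton construction underlying \cref{thm:recdef}(2) is effective and reduces truth of a sentence to the (decidable) emptiness problem for Büchi automata, and your remark that the feasibility-witnessing automata must be given effectively is a fair point the paper leaves implicit. The only step the paper adds that you omit is the preliminary reduction to the substructure with underlying set $\R_{\ge0}$, needed because recognizability and the parallel representation are only set up for subsets of $\R_{\ge0}^n$ while the quantifiers of $\mc R^+_{\mc S}$ range over all of $\R$; this is handled by writing each real as a difference of nonnegative ones and is a one-line fix.
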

\begin{proof}
It suffices to show the decidability of the first-order theory of the substructure with underlying set $\R_{\ge0}$. Note that the construction in \cref{thm:recdef}(2) is effective. Thus the decidability of the first-order theory can be reduced to the emptiness problem for Büchi automata which is well-known to be decidable. 
\end{proof}

\begin{thm}\label{alphgammreg}
  Let $\alpha$ be a quadratic irrational, let $\gamma\in\mc O_\alpha^\times$ with $\gamma>1$, and let $X\subseteq\R_{>0}^n$. The following statements are equivalent:
  \begin{enumerate}
  \item $X$ is parallelly $\mathcal{O}_{\alpha}$-recognizable,
  \item $X$ is parallelly $\mathcal{S}_{\gamma}$-recognizable,
  \item $X$ is definable in $(\R,{<},+,\Z,\alpha\Z)$,
  \item $X$ is definable in $(\R,{<},+,\Z,x\mapsto\alpha x)$.
  \end{enumerate}
\end{thm}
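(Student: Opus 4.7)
The plan is to use \cref{thm:recdef}(2) to translate conditions (1) and (2) into definability statements in $\mc R_{\mc O_\alpha}^+$ and $\mc R_{\mc S_\gamma}^+$, and then to show all four structures in (1)--(4) define the same subsets of $\R_{>0}^n$. Feasibility of $\mc O_\alpha$ has already been established, and $\mc S_\gamma$ is feasible because $\gamma\in\mc O_\alpha^\times$ with $\gamma>1$ forces its Galois conjugate to have absolute value $\gamma^{-1}<1$, so $\gamma$ is a Pisot number and \cref{gammafeas} applies.

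The equivalence $(3)\Leftrightarrow(4)$ is the easiest piece: $(4)\Rightarrow(3)$ is immediate since $\alpha\Z$ is the image of $\Z$ under $x\mapsto\alpha x$, and the converse follows from a variant of the argument in \cite{H-Twosubgroups}, recovering $x\mapsto\alpha x$ from $\alpha\Z$ by using the quadratic relation satisfied by $\alpha$ to define multiplication by $\alpha$ on the dense subgroup $\Z+\alpha\Z$ and then extending by order.

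To compare $\mc S_\gamma$ with $\mc O_\alpha$, I reduce to the canonical choice $\gamma=\|\Gamma_\alpha\|$. By \cref{charpoly}, $\|\Gamma_\alpha\|$ is a positive unit of $\mc O_\alpha$ greater than $1$, and since the positive units of $\mc O_\alpha$ form an infinite cyclic group (Dirichlet's unit theorem in rank one), there exist $m,n\in\N_{>0}$ with $\gamma^m=\|\Gamma_\alpha\|^n$. Two applications of \cref{gammpowreg} then identify parallel $\mc S_\gamma$- and parallel $\mc S_{\|\Gamma_\alpha\|}$-recognizability, so I may assume $\gamma=\|\Gamma_\alpha\|$. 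Now \cref{shifting} is the essential bridge: it expresses $q_{nP(\alpha)+k}$ and $\beta_{nP(\alpha)+k}$ as $\Q(\alpha)$-linear combinations of $\gamma^n$ and $(\det\Gamma_\alpha\cdot\gamma)^{-n}$, with coefficients depending only on $k\bmod P(\alpha)$. This provides a definable correspondence between $U^\alpha$ and $\{\gamma^k:k\in\Z\}$ (up to finitely many exceptions) compatible with the swap $\tau$, and analogously for the digit relations, yielding that $\mc R_{\mc O_\alpha}^+$ and $\mc R_{\mc S_\gamma}^+$ define the same sets.

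It remains to match these structures with $\mc R_\alpha$. From $\mc R_{\mc S_\gamma}^+$ to $\mc R_\alpha$: multiplication by $\gamma$ is a position shift in $\gamma$-representations and is therefore definable from the $V_i$ and $\tau$, and writing $\alpha=r+s\gamma$ with $r,s\in\Q$ then gives multiplication by $\alpha$. Conversely, $\mc R_\alpha$ defines the sequences $q_n,p_n,\beta_n$ via the linear recurrence $q_{n+1}=a_{n+1}q_n+q_{n-1}$ together with eventual periodicity of $(a_n)$, hence defines $U^\alpha$ and $\tau$. The main obstacle I expect is defining the digit relations $V_i^\alpha$ inside $\mc R_\alpha$: I plan to handle this by characterizing the normalized Ostrowski representation lexicographically through \cref{real_ostrowski_order}, so that $V_i^\alpha(x,U_j^\alpha)$ can be pinned down by a first-order formula expressing the greedy/minimality condition on digit choices relative to the ordering of $U^\alpha$.
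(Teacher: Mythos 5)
Your overall plan diverges from the paper's at the crucial point, and the divergence is where the gaps lie. The paper never defines the digit predicates of $\mc O_\alpha$ inside $(\R,{<},+,\Z,x\mapsto\alpha x)$: for the direction from definability to recognizability it instead shows that the \emph{primitives} of $(\R,{<},+,\Z,x\mapsto\alpha x)$ (namely $\Z$, $+$, $<$, and multiplication by $\gamma$, the latter being a mere shift on the $\mc S_\gamma$ side and then transported through the equivalence of (1) and (2)) are parallelly $\mc O_\alpha$-recognizable, and it invokes \cite[Section 4]{H-Twosubgroups} only for (1)$\Rightarrow$(3). You instead propose to define $U^\alpha$, $\tau$ and the relations $V_i$ directly in $\mathcal{R}_\alpha$, and you correctly identify the $V_i$ as the main obstacle --- but the method you sketch does not work as stated: \cref{real_ostrowski_order} compares two reals \emph{given} their digit sequences, so using it to pin down the $j$-th digit of $x$ presupposes access to the very representation you are trying to define, and a ``greedy/minimality condition on digit choices'' is a condition on an infinite digit sequence, which is not first-order expressible over $\R$ without an encoding device (this is exactly the kind of work done in \cref{thm:recdef} and in \cite{H-Twosubgroups}, which you do not carry out or cite for this step). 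Similarly, your route from $\mc R_{\mc S_\gamma}^+$ to $\mathcal{R}_\alpha$ needs $\Z$ to be definable in $\mc R_{\mc S_\gamma}^+$; this is not addressed and is not $\N_{\mc S_\gamma}$ (natural numbers generically have nonzero fractional $\gamma$-digits), so absent the transfer through $\mc O_\alpha$ it is essentially as hard as the theorem itself.

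Two further points. Your reduction for (1)$\Leftrightarrow$(2) (Dirichlet's unit theorem, \cref{gammpowreg}, \cref{shifting}) matches the paper's strategy, but the phrase ``definable correspondence between $U^\alpha$ and $\set{\gamma^k}$ compatible with $\tau$, and analogously for the digit relations'' glosses over the key phenomenon: by \cref{shifting}, $q_{nP(\alpha)+k}=C\norm{\Gamma_\alpha}^n+D(\det\Gamma_\alpha\norm{\Gamma_\alpha})^{-n}$, so a single Ostrowski digit contributes to \emph{two} $\gamma$-positions (one in the integer part, one mirrored into the fractional part); this is why the paper proves parallel regularity of the normalization-change map $\Phi$ via explicit transductions ($f$, $g$, $h_k^{\Z,1}$, $h_k^{\Z,2}$, $h_k^{\R}$) rather than a position-to-position correspondence, and why parallel (not sequential) recognizability is the right notion here. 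Finally, your ``$(4)\Rightarrow(3)$ is immediate since $\alpha\Z$ is the image of $\Z$'' has the directions swapped: that observation gives (3)$\Rightarrow$(4); the converse is the nontrivial direction and its difficulty is not ``use the quadratic relation on $\Z+\alpha\Z$ and extend by order'' --- recovering the coefficients of $m+n\alpha$ in $(\R,{<},+,\Z,\alpha\Z)$ is precisely the hard content of \cite{H-Twosubgroups}, so if you defer to that reference you should do so for the actual statement you need, as the paper does.
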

\begin{proof}

  The implication (3)$\implies$(4) is immediate. For (1)$\implies$(3), by \cref{thm:recdef} it suffices to show that $(\R,{<},+,\Z,\alpha\Z)$ defines $\mc R_{\mc O_{\alpha}}^+$. This is essentially \cite[Section 4]{H-Twosubgroups}.\newline
  
 \noindent Consider the equivalence (1)$\iff$(2). It suffices to show that the bijection 
 \[
 \Phi : \rho^{\alpha}(\R_{\geq 0}) \to \rho_{\gamma}(\R_{\geq 0})
 \] 
 that for every $x \in \R_{\geq 0}$ maps the normalized $\mc O_\alpha$-representation $\rho^{\alpha}(x)$ to the normalized $\mc S_\gamma$-representation $\rho_{\gamma}(x)$, is parallelly regular. Indeed, then its inverse is also parallelly regular, and by composing with these maps we can show every subset of $\R^n_{\geq 0}$ that is parallelly $\mc O_{\alpha}$-recognizable, is also parallelly $\mc S_{\gamma}$-recognizable and vice versa.\newline

 \noindent By Dirichlet's unit theorem, $\mc O_\alpha^\times/\set{-1,+1}$ is a free abelian group of rank $1$. Thus there are $k,\ell\in \N$ such that $\gamma^k=\norm{\Gamma_\alpha}^\ell$. Hence, by \cref{gammpowreg} we may assume $\gamma=\norm{\Gamma_\alpha}$ and $\det \Gamma_\alpha=1$.
 Let $m,C,D,E$ be given by \cref{shifting}.
 \newline

  \noindent Let $N\in\N$. We first construct $M\in \N$ and a parallelly regular map $f_N\colon \mc W_{M^{\alpha}}\to\mc W_{M}$ such that for all $w\in \mc W_{M^{\alpha}}$
  \begin{enumerate}
      \item[(a)] $[f_N(w)]_{\mc O_\alpha}=[w]_{\mc O_\alpha}$, and 
      \item[(b)] all entries of $f_N(w)$ at indices $i\in \Z$ with $|i|<N$ vanish.
  \end{enumerate}
  It is easy to see that there is an $M\in\N$ such that every nonnegative real number has a (not necessarily normalized) $\mc O_\alpha$-representation all whose entries at indices $i\in \Z$ with $|i|<N$ vanish. Then for $w\in\mc W_{M^{\alpha}}$, we define $f_N(w)\in \mc W_M$ as the lexicographically maximal $\mc O_\alpha$-representation of $[w]_{\mc O_\alpha}$ in $\mc W_{M}$ such that (b) holds. By parallell regularity of the lexicographic order and $\mc O_{\alpha}$-normalization, the function $f_N$ is parallelly regular. Set $f:=f_{mP(\alpha)}$.\newline

\noindent For each $k \in \{1,\dots,P(\alpha)\}$, let $V_k$ be the set of all $w=(w_i)_{i \in \Z} \in\mc W_{M}$ such that for all $i\in \Z$
  \begin{itemize}
      \item  $w_i=0$ if $i\not\equiv k\mod P(\alpha)$, and
      \item $w_i=0$ if $|i|<m P(\alpha)$.
  \end{itemize}
Let $V_k^{\Z}$ be the subset of all $w=(w_i)_{i \in \Z} \in V_k$ such that 
$w_i=0$ for all $i \in \Z_{<0}$, and let $V_k^{\R}$ be the subset of all $w=(w_i)_{i \in \Z} \in V_k$ such that 
$w_i=0$ for all $i \in \Z_{\geq 0}$. These sets are easily seen to be parallelly regular. Futhermore, there is a parallelly regular map $g = (g_1,\dots,g_{2P(\alpha)}) :\mc W_M \to \mc W_M^{2P(\alpha)}$ such that $(g_{2k-1}(w),g_{2k}(w)) \in V_{k}^{\Z} \times V_k^{\R}$ for all $k\in \{1,\dots,P(\alpha)\}$, and $[w]_{\mc S_\gamma} = \sum_{\ell=1}^{2P(\alpha)} [g_{\ell}(w)]_{\mc S_\gamma}$.\newline

\noindent Let $w\in V_{k}^{\Z}$. Then $w$ is of the form
  \begin{equation}\label{eq:gammaalpha1}\tag{$\ast$}
w_n0^{P(\alpha)-1}\cdots 0^{P(\alpha)-1}w_20^{P(\alpha)-1}w_10^{k-1+mP(\alpha)}\star 0^\omega.
  \end{equation}
Let $h_k^{\Z,1} : V_{k}^{\Z} \to \mc W_{M}$ be the function that maps $w$ of the form \eqref{eq:gammaalpha1} to 
  \[
  w_n\cdots w_2w_1\star 0^\omega,
\]
and let $h_k^{\Z,2} : V_{k}^{\Z} \to \mc W_{M}$ be the function that maps $w$ of the form \eqref{eq:gammaalpha1} to 
  \[
  0\star w_1w_2\cdots w_n0^\omega.
\]
Since reversing and shifting are parallelly regular, both functions are parallelly regular. We obtain from \cref{shifting} that
  \[
  [w]_{\mc O_\alpha} = C[w_n\dots w_2w_1\star 0^\omega]_{\mc S_\gamma}+D[0\star w_1w_2\dots w_n0^\omega]_{\mc S_\gamma}
  = C [h_k^{\Z,1}(w)]_{\mc S_\gamma} + D [h_k^{\Z,2}(w)]_{\mc S_\gamma}.
  \]
Let $w\in V_{k}^{\R}$. Then $w$ is of the form
  \begin{equation}\label{eq:gammaalpha2}\tag{$\ast\ast$}
 0\star 0^{k-1+mP(\alpha)}w_10^{P(\alpha)-1}w_20^{P(\alpha)-1}\cdots.
  \end{equation}
 Let $h_k^{\R} : V_{k}^{\R} \to \mc W_{M}$ be the function that maps $w$ of the form \eqref{eq:gammaalpha2} to  $0\star w_1w_2\cdots$. Since shifting is parallelly regular, this function is also parallelly regular. By \cref{shifting}
 \[
   [w]_{\mc O_\alpha} = E[0\star w_1w_2\dots]_{\mc S_\gamma} = [h_k^{\R}(w)]_{\mc S_\gamma}. 
  \]

\vspace{0.2cm}

\noindent Let $w\in \rho^{\alpha}(\R_{\geq 0})$. Combing the above observations, we obtain
\[
[w]_{\mc O_{\alpha}} = \sum_{k=1}^{P(\alpha)} \Big( C [(h_k^{\Z,1}\circ g_{2k-1} \circ f)(w)]_{\mc S_\gamma} + D [(h_k^{\Z,2}\circ g_{2k-1} \circ f)(w)]_{\mc S_\gamma}\Big) + \sum_{k=1}^{P(\alpha)}  E [(h_k^{\R}\circ g_{2k} \circ f)(w)]_{\mc S_\gamma}.
\]
Thus
\[
\Phi(w) = \rho_{\gamma}\Bigg(\sum_{k=1}^{P(\alpha)} \Big( C [(h_k^{\Z,1}\circ g_{2k-1} \circ f)(w)]_{\mc S_\gamma} + D [(h_k^{\Z,2}\circ g_{2k-1} \circ f)(w)]_{\mc S_\gamma}\Big) + \sum_{k=1}^{P(\alpha)}  E [(h_k^{\R}\circ g_{2k} \circ f)(w)]_{\mc S_\gamma}\Bigg).
\]
Since $\mc S_{\gamma}$-normalization is parallelly regular and scalar multiplication and addition are parallelly $\mc S_\gamma$-recognizable, it follows that $\Phi$ is parallelly regular.\newline 
  
  \noindent For the implication (4)$\implies$(1), it suffices to show that $\Z,+,<,\gamma\cdot-$ are parallelly $\mc O_\alpha$-recognizable, since given $+$, multiplication with $\alpha$ and $\gamma$ are interdefinable. By \cref{thm:recdef} it is enough to show that these sets are definable in $\mc R_{\mc O_{\alpha}}^+$. For addition and order this is is immediate, and for $\Z$ it follows from \cref{automatadef}. Note that multiplication by $\gamma$ corresponds to a shift in the $\mc S_\gamma$-representation and hence is definable in $\mc R_{\mc S_{\gamma}}^+$. Thus multiplication by $\gamma$ is definable in $\mc R_{\mc O_{\alpha}}^+$ by the equivalence of (1) and (2) and \cref{thm:recdef}.
\end{proof}

\section{The main argument}

In this section we present the main technical step in the proof of Theorem B. The thrust of the argument is essentially the same as in \cite[Section 5\&6]{BBB10}, exploiting product-stability in different numeration systems. As before, we need to make non-trivial adjustments for our use of irrational bases. We begin with a statement that is established for $r$- and $s$-recognizable subsets of $\R$ in \cite[Section 5.1]{BBB10}.
\begin{lem}\label{badpointsex}
    Let $\alpha_1,\dots,\alpha_n$ be irrational Pisot numbers with $\bigcap_{i=1}^n\Q(\alpha_i)=\Q$, and let $X\subseteq[0,1]^d$ be such that $X$ is sequentially $\mc S_{\alpha_i}$-recognizable for $i\in\set{1,\dots,n}$, but not definable in $(\R,{<},+,1)$. 
  Then there is a bad point with respect to $X$ that lies in $\Q^d$.
\end{lem}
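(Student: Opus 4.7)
My plan is to examine the closed set
\[
B:=\set{x\in[0,1]^d: x\text{ is a bad point with respect to }X}
\]
and produce a rational point inside it. This unfolds in three stages: first, show $B\ne\emptyset$; second, show $B$ is definable in each $\mc R_{\mc S_{\alpha_i}}$, hence sequentially $\mc S_{\alpha_i}$-recognizable by \cref{thm:recdef}; third, exhibit a rational point in $B$ as its coordinatewise minimum.

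For nonemptiness I argue by contradiction: were every $x\in[0,1]^d$ a good point, each would lie in an open rational box on which $X$ is $(\R,{<},+,1)$-definable, and compactness of $[0,1]^d$ would extract a finite subcover exhibiting $X$ itself as a finite union of $(\R,{<},+,1)$-definable sets, contradicting the hypothesis. The main technical step is the definability of $B$ in $(\R,{<},+,1,X)$. For rational $\eps>0$ and the bounded set $Y_{x,\eps}:=X\cap B(x,\eps)$, \cref{verticesdef} with $K=\Q$ provides a canonically definable vertex set $V_{x,\eps}$; I would characterize $x$ as good iff there exists a rational $\eps>0$ such that (a) $V_{x,\eps}$ is finite (equivalently bounded, closed and discrete, each a first-order condition) and (b) $Y_{x,\eps}$ equals the union of those convex hulls of at-most-$(d{+}1)$-element subsets of $V_{x,\eps}$ that are contained in $Y_{x,\eps}$, with Carath\'eodory bounding the arity. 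Both clauses are first-order over $(\R,{<},+,1,X)$, yielding the required definability of $B$.

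Having $B$ sequentially $\mc S_{\alpha_i}$-recognizable for every $i$, I construct $b\in B$ one coordinate at a time. Set $b_1:=\min\pi_1(B)$; this exists since $\pi_1(B)$ is closed and bounded. Because $\mc S_{\alpha_i}$ is greedy, greedy $\mc S_{\alpha_i}$-representations respect the order on $\R_{\ge0}$, so $\rho^{\alpha_i}(b_1)$ is the lexicographic minimum of the nonempty $\omega$-regular language of greedy $\mc S_{\alpha_i}$-representations of $\pi_1(B)$---which is attained precisely because the real minimum is attained. Such a lex-minimum is ultimately periodic: it is computed by the greedy deterministic traversal of any deterministic Muller automaton for the language, and the traversal must eventually cycle. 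By \cref{gammultper} we obtain $b_1\in\Q(\alpha_i)$ for every $i$, and so $b_1\in\bigcap_i\Q(\alpha_i)=\Q$. Iterating with $b_{k+1}:=\min\pi_{k+1}\bigl(B\cap\set{x_j=b_j: j\le k}\bigr)$---each slice remains sequentially $\mc S_{\alpha_i}$-recognizable since the rational constants $b_1,\dots,b_k$ are definable in the base structure---produces the desired $b=(b_1,\dots,b_d)\in B\cap\Q^d$.

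The chief obstacle is the second stage: internalising the a priori external quantification over finite unions of $\Q$-polyhedra into a first-order formula, where \cref{verticesdef} combined with Carath\'eodory does the heavy lifting. The remainder draws on standard facts: compactness of $[0,1]^d$, order-preservation of greedy Pisot representations, and the ultimate periodicity of attained lex-minima of $\omega$-regular languages.
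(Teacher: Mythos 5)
Your Stage 2 is where the proposal breaks down, in two ways. First, clause (b) is not actually first-order over $(\R,{<},+,1,X)$: the relation ``$y\in\operatorname{conv}(v_0,\dots,v_d)$'' with the $v_i$ as \emph{variables} is not semilinear (already in the plane, $\{(s,t): (1,1)\in\operatorname{conv}((0,0),(s,0),(0,t))\}$ has a hyperbolic boundary), so you cannot write ``$Y_{x,\eps}$ equals the union of hulls of $\le(d{+}1)$-element subsets of $V_{x,\eps}$ contained in $Y_{x,\eps}$'' as an $\mathcal{L}_\Q(P)$-formula; Carath\'eodory does not rescue this. The paper sidesteps exactly this point by quoting the B\`es--Choffrut sentence (\cref{selfdef}, applied with $K=\R$) to express ``near $x$, $X$ is a finite union of polyhedra,'' and uses \cref{verticesdef} only \emph{externally}, never inside a formula. (There are also smaller slips: Euclidean balls $B(x,\eps)$ are not definable in linear arithmetic, and closed convex hulls cannot reproduce non-closed definable sets such as open simplices; both are fixable, the first point above is not, at least not by this route.)

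Second, and more fundamentally, even granting expressibility your criterion characterizes ``$X$ is, near $x$, a finite union of polytopes with \emph{arbitrary real} vertices,'' not ``$X\cap U$ is definable in $(\R,{<},+,1)$ without parameters.'' These differ precisely when the local polyhedral data has irrational vertices (e.g.\ $X$ locally an interval ending at an irrational point): such $x$ are bad points but satisfy your criterion, and conversely a good irrational $x$ gives you no guarantee from \cref{verticesdef}, since $X\cap B(x,\eps)$ need not be parameter-free definable. Hence the set you can define is not the bad-point set, and your Stage-1 compactness argument (which shows the \emph{bad} set is nonempty) does not show that \emph{your} definable set is nonempty. Closing this hole is exactly the paper's extra step, which your plan omits: if every point has a neighborhood on which $X$ is a finite union of polyhedra, then by compactness $X$ is globally such a union; the finite vertex set $V$ produced by \cref{verticesdef} is definable from $X$, hence sequentially $\mc S_{\alpha_i}$-recognizable for every $i$, hence consists of points with ultimately periodic representations, so by \cref{gammultper} $V\subseteq\bigcap_i\Q(\alpha_i)^d=\Q^d$ and $X$ would be definable in $(\R,{<},+,1)$ after all --- a contradiction. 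Note that this is a second, essential use of the hypothesis $\bigcap_i\Q(\alpha_i)=\Q$, which in your proposal appears only in Stage 3. Your Stage 3 itself (lexicographic/coordinatewise minimum of a compact recognizable set has an ultimately periodic representation, hence rational coordinates) is sound and essentially the paper's argument, though you would still need to check that your surrogate set is closed, which is automatic for the paper's $B$ but not for a set defined by your $\exists\eps$-criterion.
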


\begin{proof}
  Let $B$ be the set of points $x\in[0,1]^d$ such that there is no open box $U$ around $x$ such that $U\cap X$ is a finite union of polyhedra. Note that $B$ is a subset of the set of bad points with respect to $X$. Furthermore, $B$ is definable in $(\R,{<},+,1,X)$ by \cref{selfdef}, thus by \cref{thm:recdef} sequentially $\mc S_{\alpha_i}$-recognizable for $i\in\set{1,\dots,n}$.\newline
  
  \noindent Towards a contradiction, assume $B=\emptyset$. Then for each $x \in [0,1]^d$ there is an open box $U_x$ around $x$ such that $U_x\cap X$ is a finite union of polyhedra. By compactness finitely many $U_x$ cover $[0,1]^d$ and so $X$ is a finite union of polyhedra. By \cref{verticesdef} there is a finite set $V$ definable in $(\R,{<},+,1,X)$ containing all vertices of these polyhedra. Since $V$ is finite and is sequentially $\mc S_{\alpha_i}$-recognizable, then it follows from \cref{gammultper} that
  \[
  V\subseteq\bigcap_{i=1}^n\Q(\alpha_i)^d=\Q^d.
  \]
   Thus $X$ is definable in $(\R,{<},+,1)$, contradicting the assumptions on $X$.\newline 
   
  \noindent Hence $B$ is nonempty. Since $B$ is compact, there is a lexicographically minimal $p\in B$. It remains to show $p\in\Q^d$. Since $\{p\}$ is definable in $(\R,{<},+,1,B)$, it is sequentially $\mc S_{\alpha_i}$-recognizable for all $i\in\set{1,\dots,n}$. As above, it follows that $p \in \Q^d$.
  \end{proof}

\subsection{Product and sum stability} We now recall the definitions of product- and sum-stability.

\begin{defn}
Let $X,D\subseteq \R$ and let $t \in \R$. We say $X$ is 
    \textbf{$t$-product stable in $D$} if for all $x\in D$ with $tx\in D$
    \[
    x \in X \text{ if and only if } tx \in X.
    \]
    We say $X$ is \textbf{$t$-sum stable in $D$} if for all $x\in D$ with $x+t\in D$
    \[
    x \in X \text{ if and only if } x+t \in X.
    \]
For $X\subseteq\R_{> 0}$ we set
\begin{align*}
\Pi(X)&:=\{t \in \R_{>0} \ : \ X \text{ is $t$-product stable in } \R_{>0}\}, \\
\Sigma(X) &:= \{ t \in \R \ : \ X \text{ is $t$-sum stable in } \R_{>0}\}.
\end{align*}

\end{defn}

\noindent If $X$ is parallelly $\mc S_\alpha$-recognizable for a Pisot number $\alpha$, then $\Sigma(X)$ is parallelly $\mc S_\alpha$-recognizable, as it is defined by a formula. Moreover, $\Sigma (X)$ is a subgroup of $(\R,+)$.\newline

\noindent We now prove the following analogue of \cite[Lemma 5.2]{BBB10}.  
\begin{lem}\label{lem:prodstable}
    Let $\alpha_1,\dots,\alpha_n$ be irrational Pisot numbers with $\bigcap_{i=1}^n\Q(\alpha_i)=\Q$, and let $X\subseteq[0,1]^d$ be parallelly $\mc S_{\alpha_i}$-recognizable for all $i\in\set{1,\dots,n}$, but not definable in $(\R,{<},+,1)$.  Then there is $Y\subseteq[0,1]^d$ and $N\in \N_{>0}$ such that
  \begin{enumerate}
    \item $Y$ is sequentially $\mathcal{S}_{\alpha_i}$-recognizable for all $i\in\set{1,\dots,n}$,
    \item $Y$ is $\alpha_i^N$-product stable in $[0,1]^d$ for all $i\in \{1,\dots,n\}$,
    \item $Y$ is not definable in $(\R,{<},+,1)$.
  \end{enumerate}
\end{lem}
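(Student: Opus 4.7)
The plan is to reduce to the case where $0$ is a bad point of $X$, extract a common periodicity from the automata in each base, and then define $Y$ by a single-base formula whose value turns out to be independent of the base chosen.

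By the ``moreover'' clause of \cref{thm:recdef}, parallel and sequential $\mathcal{S}_{\alpha_i}$-recognizability coincide for bounded sets, so $X$ is sequentially $\mathcal{S}_{\alpha_i}$-recognizable for every $i$. By \cref{badpointsex} $X$ admits a bad point in $\Q^d$, and \cref{zerobad} then lets us replace $X$ by a set with $0$ as a bad point; this new set is $(\R,{<},+,1,X)$-definable, hence still sequentially $\mathcal{S}_{\alpha_i}$-recognizable for each $i$ via \cref{thm:recdef} applied to the common reduct. Fix deterministic Muller automata $\mc A_i$ recognizing the sequential $\mathcal{S}_{\alpha_i}$-representation of $X$, and let $q^{(i)}_K$ denote the state of $\mc A_i$ reached after reading $0\star$ followed by $K$ zero $d$-tuples. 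Finite-state pigeonhole furnishes $M,N\in\N_{>0}$ with $q^{(i)}_{(K+1)N}=q^{(i)}_{KN}$ for all $i$ and all $K\ge M$; translating back, $X$ is $\alpha_j^N$-product stable in $[0,\alpha_j^{-MN}]^d$ for each $j$. Pick a rational $\kappa\ge\max_j\alpha_j^{MN}$ so that $[0,1/\kappa]^d\subseteq\bigcap_j[0,\alpha_j^{-MN}]^d$, and choose integers $M_j$ with $\alpha_j^{-M_jN}\le 1/\kappa$.

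For a fixed $j\in\set{1,\dots,n}$ define
\[
Y:=\set{y\in[0,1]^d\ :\ \alpha_j^{-M_jN}y\in X}.
\]
The central claim is that $Y$ is independent of $j$. Given $j,j'$ and $y\in[0,1]^d$, set $u:=\alpha_j^{-M_jN}y$ and $v:=\alpha_{j'}^{-M_{j'}N}y$, both lying in $[0,1/\kappa]^d$. Follow the path from $u$ that first scales down by $\alpha_{j'}^{-N}$ at each step until reaching $\alpha_{j'}^{-M_{j'}N}u$, then scales up by $\alpha_j^N$ at each step until reaching $\alpha_j^{M_jN}\alpha_{j'}^{-M_{j'}N}u=v$. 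The scaling-down phase stays below $u$ and the scaling-up phase stays below $v$, so every intermediate point lies in $[0,1/\kappa]^d$, and iterating one-step product stability of $X$ along this path yields $u\in X\iff v\in X$. Condition (1) is then immediate: using the $j$-specific characterization, $Y$ is definable in $\mathcal{R}_{\mathcal{S}_{\alpha_j}}$ and hence sequentially $\mathcal{S}_{\alpha_j}$-recognizable by \cref{thm:recdef}. Condition (2) follows from applying one-step product stability of $X$ to $\alpha_j^{-M_jN}y$ and $\alpha_j^{-(M_j-1)N}y$, both in $[0,1/\kappa]^d$ when $y\in[0,\alpha_j^{-N}]^d$. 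Condition (3) follows since the same chaining gives $Y\cap[0,1/\kappa]^d=X\cap[0,1/\kappa]^d$, which is not $(\R,{<},+,1)$-definable because $0$ is a bad point of $X$.

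The main obstacle is proving the base-independence of $Y$; this requires the scaling path connecting $u$ and $v$ to remain inside the product-stable box $[0,1/\kappa]^d$. Splitting the path into a scaling-down segment followed by a scaling-up segment and using monotonicity delivers the uniform bound $\max(u,v)\le 1/\kappa$, which makes the chain of product-stability applications valid and is where the interaction between the various bases is essentially used.
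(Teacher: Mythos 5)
Your proposal is correct, and its first half is exactly the paper's argument: reduce via \cref{badpointsex} and \cref{zerobad} to the case where $0$ is a bad point of $X$, then pump the all-zero prefix in (total) deterministic Muller automata to obtain a common exponent $N$ such that $X$ is $\alpha_i^N$-product stable on a box $[0,\alpha_i^{-M N}]^d$ around $0$. Where you genuinely diverge is the rescaling step. The paper simply picks one rational $q$ with $[0,1/q]^d$ inside all these boxes and sets $Y:=qX\cap[0,1]^d$; since multiplication by a rational is definable from $+$ and $1$ alone, this single set is sequentially $\mc S_{\alpha_i}$-recognizable for every $i$ at once, and product stability and non-definability transfer with no further work. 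You instead blow up by the base-dependent irrational factor $\alpha_j^{M_jN}$, which is only recognizable in base $\alpha_j$, and therefore you must prove that the resulting set does not depend on $j$; your chaining argument through $[0,1/\kappa]^d$ does establish this, and since $\kappa$ is rational while $\alpha_j^{MN}$ is irrational, your box sits strictly inside $[0,\alpha_j^{-MN})^d$, so the small boundary subtlety about normalized representations of points with a coordinate exactly $\alpha_j^{-MN}$ (a subtlety the paper's own wording also glosses over, and which is fixed by enlarging $M$ by one) does not affect your chains. The cost of your route is that it needs two extra ingredients the rational-scaling trick avoids: the fact that multiplication by $\alpha_j^{M_jN}$ is definable in $\mc R_{\mc S_{\alpha_j}}$ (true, being a shift of the $\mc S_{\alpha_j}$-representation, but it deserves a sentence of justification) and the base-independence claim itself; the benefit is a more canonical $Y$, namely the germ of $X$ at $0$ rescaled by a power of each base, which is slightly more information than the paper's $qX\cap[0,1]^d$ provides.
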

\begin{proof}
  By \cref{badpointsex,zerobad} we may assume that $0$ is a bad point for $X$. Let $i\in \{1,\dots,n\}$ and let $\mc A_i$ be a total deterministic Muller automaton sequentially recognizing $\rho_{\alpha_i}(X)$ over the alphabet $\Sigma_i=\set{0,\dots,\floor{\alpha_i}}^d\cup \set{\star }$. Hence there is a path of length $m_i$ from the initial state to a cycle of length $N_i$, with all labels $0\in\N^d$. Thus for every  word $w\in\Sigma_i^*$, 
  \[
  \mc A_i \text{ accepts } \star 0^{m_i}w \text{ if and only if } \mc A_i \text{ accepts } \star 0^{m_i+N_i}w.
  \]
  However, 
  \[
  [0^{m_i+N_i}w]_{\mc S_{\alpha_i}}=\alpha_i^{N_i}[0^{m_i}w]_{\mc S_{\alpha_i}}.
  \]
  Hence $X\cap[0,\alpha^{-m_i}]$ is $\alpha_i^{N_i}$-product stable in $[0,\alpha_i^{-m_i}]$.\newline

  \noindent Now let $N$ be the least common multiple of $N_1,\dots,N_d$, and let $q\in \Q$ be larger than $\alpha_i^N$ for each $i\in \{1,\dots,d\}$. Set $Y=qX\cap[0,1]$. Then $Y$ is $\alpha_i^N$-product stable in $[0,1]^d$ and sequentially $\mc S_{\alpha_i}$-recognizable for $i=1,\dots,n$. Since $0$ is a bad point with respect to $X$, we know that $Y$ is not definable in $(\R,{<},+,1)$.
\end{proof}

\noindent Note that \cite[Lemma 5.2]{BBB10} only handles the case $d=1$. We now show the stronger statement that $Y$ can be taken to be a subset of $[0,1]$. The proof is based on ideas from \cite[Section 3.2]{BBL09}.

\begin{prop}\label{definmultidim}
  Let $\alpha,\beta$ be irrational Pisot numbers with $\Q(\alpha)\cap\Q(\beta)=\Q$, and let $X\subseteq[0,1]^d$ be parallelly $\mc S_\alpha$-recognizable and $\mc S_\beta$-recognizable, but not definable in $(\R,{<},+,1)$. Then there is an $\mc S_\alpha$-recognizable subset of $[0,1]$ that is not definable in $(\R,{<},+,1)$, and $\alpha^N$-product stable and $\beta^N$-product stable in $[0,1]$ for some $N\in\N_{>0}$.
\end{prop}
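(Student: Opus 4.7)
The plan is to first apply \cref{lem:prodstable} with $n=2$ and bases $\alpha,\beta$ to the given $X$, producing a set $Y\subseteq[0,1]^d$ that is sequentially $\mc S_\alpha$- and $\mc S_\beta$-recognizable, $\alpha^N$- and $\beta^N$-product stable in $[0,1]^d$ for some $N \in \N_{>0}$, and not definable in $(\R,<,+,1)$. By \cref{thm:recdef}, sequential and parallel recognizability coincide on bounded sets, so both apply to $Y$. If $d = 1$, the set $Y$ already satisfies the conclusion, so we may assume $d \ge 2$, and the task becomes to extract a one-dimensional witness from $Y$.

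For $d \ge 2$, the plan is to pass to a suitable family of linear slices of $Y$ through the origin. For each nonzero rational direction $c = (c_1,\dots,c_d) \in \Q^d_{\ge 0}$ with $\max_i c_i = 1$, define
\[
Z_c := \{ t \in [0,1] : (c_1 t, c_2 t, \dots, c_d t) \in Y \}.
\]
Since the parametrization $t \mapsto (c_1 t, \dots, c_d t)$ is $(\R,<,+,1)$-definable with rational parameters, each $Z_c$ is $\mc S_\alpha$-recognizable. Moreover, $Z_c$ is automatically $\alpha^N$- and $\beta^N$-product stable in $[0,1]$: if $t \in Z_c$ and $\alpha^N t \in [0,1]$, then $\alpha^N(c_1 t, \dots, c_d t) \in [0,1]^d$ lies in $Y$ by product stability, so $\alpha^N t \in Z_c$, and analogously for $\beta^N$. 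Thus it suffices to exhibit a single $c$ for which $Z_c$ is not $(\R,<,+,1)$-definable.

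To produce such a $c$ I would argue by contradiction. By \cref{badpointsex} applied to $Y$, there is a bad point $p \in \Q^d$ with respect to $Y$. Since $\Q(\alpha)\cap\Q(\beta)=\Q$ forces multiplicative independence of $\alpha,\beta$ (compare \cref{quadirrmulind}), the group $\{\alpha^{Nk}\beta^{N\ell} : k,\ell\in\Z\}$ is dense in $\R_{>0}$, and product stability links the local structure of $Y$ at $p$ with its local structure along the entire orbit of $p$, which accumulates at the origin. Suppose every rational radial slice $Z_c$ is a finite union of $\Q$-intervals. Using \cref{selfdef} to turn this pointwise definability into a first-order statement about $Y$, and combining it with the density of multiplicative orbits, one argues that $Y$ must locally agree with a union of $(\R,<,+,1)$-definable rays through the origin in a neighborhood determined by the orbit of $p$. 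Transferring this definability back to a neighborhood of $p$ via the scaling action then contradicts the badness of $p$.

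The main obstacle is this reconstruction step — rigorously converting the definability of all rational radial slices into local $(\R,<,+,1)$-definability at the bad point — which requires carrying the strategy of \cite[Section 3.2]{BBL09}, originally devised for integer bases, into the real Pisot setting, combined with the bad-points machinery of \cite{BBB10}. A delicate point is that $(\R,<,+,1)$-definability is not itself preserved under scaling by $\alpha^N$ or $\beta^N$, so the transfer of non-definability along the scaling orbit must be argued carefully, exploiting that recognizability in both $\mc S_\alpha$ and $\mc S_\beta$ constrains boundary values of $Y$ to lie in $\Q(\alpha) \cap \Q(\beta) = \Q$.
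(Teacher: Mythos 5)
Your opening move matches the paper: apply \cref{lem:prodstable} to replace $X$ by a product-stable, recognizable, non-definable $Y$, and then try to extract a one-dimensional witness by slicing along rays through the origin; the observation that each radial slice inherits $\alpha^N$- and $\beta^N$-product stability is correct and is also how the paper proceeds. But the heart of the argument — deriving a contradiction in the case where \emph{every} slice is $(\R,{<},+,1)$-definable — is exactly the step you leave open ("the main obstacle"), so as written the proposal has a genuine gap rather than a complete proof. Moreover, the route you sketch for closing it (bad points via \cref{badpointsex}, density of the multiplicative orbit, and a local reconstruction of $Y$ near the orbit of a bad point) is not what makes the argument work, and it is not clear it can work in that form: knowing that the countably many slices in directions $c\in\Q^d$ are definable says nothing directly about $Y$ off this measure-zero union of lines, and no mechanism is provided to propagate that information to a neighborhood of a bad point.

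The paper closes this case by a quite different, and cleaner, mechanism. It argues by induction on $d$, so that $Y\cap\bd([0,1]^d)$ may be assumed definable (otherwise a face gives the witness by induction), and slices along directions $z\in\Q(\alpha)^d\cap\bd([0,1]^d)$ — not $\Q^d$. If some slice $Z=f^{-1}(Y)$ is non-definable, it is the desired witness (here $f(1)=z$ is $\mc S_\alpha$-recognizable because $z$ has coordinates in $\Q(\alpha)$). If every such slice is definable, then $\alpha^N$-product stability forces each slice to be empty or all of $\lointer{0,1}$ (the orbit $\alpha^{-kN}x\to0$ fills an interval $(0,\eps)$, and scaling back up fills $\lointer{0,1}$). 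Hence $Y$ agrees, on all points with coordinates in $\Q(\alpha)$, with the cone over the definable set $Y\cap\bd([0,1]^d)$; since sequentially recognizable sets are determined by their ultimately periodic representations (\cref{regtestultper}), whose values lie in $\Q(\alpha)$ (\cref{gammultper}), this agreement upgrades to genuine equality, making $Y$ definable — contradiction. This is why $\Q(\alpha)$-directions are essential (rational directions are too sparse for the $\omega$-regularity argument to see), why no bad point or scaling-transfer of definability is needed at this stage, and why your worry about definability not being preserved under scaling simply does not arise: only set-theoretic product stability of $Y$ is used.
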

\begin{proof}
  By \cref{lem:prodstable} we may assume that $X$ is $\alpha^N$- and $\beta^N$-product stable in $[0,1]^d$ for some $N>0$. We prove the statement by induction on $d$. The case $d=1$ is trivial. Now let $d> 1$. 
  By the induction hypothesis we may assume that $X\cap\bd([0,1]^d)$ is definable in $(\R,{<},+,1)$, and hence a finite union of polyhedra with rational vertices. We now show that either the conclusion of the proposition holds or
  \begin{equation}\label{eq:definmultidim}
  X=\set{tx\ : \ t\in\R\text{ and }0<t\le1\text{ and }x\in X\cap\bd([0,1]^d)}.    
  \end{equation}
  Since \eqref{eq:definmultidim} yields a contradiction, this is enough to finish the whole proof.\newline

  \noindent By \cref{regtestultper} we just need to show that \eqref{eq:definmultidim} holds after intersecting both sides with $\Q(\alpha)^d$. For this, it suffices to show that for all $z\in\Q(\alpha)^d\cap\bd([0,1]^d\setminus\set0)$, 
  \[
\text{either } \{ t z \ : t \in (0,1] \} \subseteq X \text{ or  } \{ tz : t\in(0,1]) \} \cap X =\emptyset.
  \]
  Let $z\in\Q(\alpha)^d\cap\bd([0,1]^d\setminus\set0)$. The linear map $f\colon\R \to \R^d$ sending $1$ to $z$ is definable in $(\R,{<},+,1,X,x\mapsto \alpha x)$ since $z\in \Q(\alpha)^d$, and thus sequentially $\mc S_\alpha$-recognizable. Set $Z := f^{-1}(X)$.\\
  Note that $Z$ is $\alpha^N$-product stable and $\beta^N$-product stable in $[0,1]$ because $X$ is so. Thus if $\{tz \ : \ t\in(0,1]\}\cap X$ is not definable in $(\R,{<},+,1)$, then $Z$ satisfies the conclusion of the proposition. Hence we may assume that $Z$ is definable in $(\R,{<},+,1)$, and thus is a finite union of intervals and points. We have to show that $Z$ is empty or contains $\lointer{0,1}$.\\
  Replacing $\alpha$ by $\alpha^{-1}$ if necessary, assume $\alpha>1$. Suppose $x\in Z$. Then $\alpha^{-kN}x\to 0$ as $k\to \infty$. Since $Z$ contains $\alpha^{-kN}x$ for $k\in \N$ and is a finite union of intervals and points, it contains an interval $(0,\eps)$. Then $Z$ contains $(0,\alpha^{kN}\eps)\cap[0,1]$ for all $k\in\N$ and thus $\lointer{0,1}\subseteq Z$.
\end{proof}

\subsection{Ultimate periodicity}

In this subsection, we show that $\mc{S}_{\alpha}$-recognizable sets that are both $\alpha$- and $\beta$-product stable in $\R_{>0}$, are eventually $p$-sum stable for some $p$. This roughly corresponds to the reduction in \cite[Lemma 6.3]{CLR}, although we again have to use different arguments. In particular, we borrow some ideas and notation from Krebs \cite{Krebs-Reasonable}.

\begin{defn}
Let $X,Y\subseteq\R$ be such that $Y\subseteq X$. We say $X$ has \textbf{local period $p$} on $Y$ if $X\cap Y$ is $p$-sum stable on $Y$. We say $X$ is \textbf{ultimately periodic} with period $p$ if there is $a\in \R$ such that $X$ has local period $p$ on $[a,\infty)$.
 \end{defn} 

\noindent The main result we prove in this subsection is the following.  
\begin{prop}\label{prodstabultper}
 Let $\alpha$ be a Pisot number, let $X \subseteq \R_{\geq 0}$ be sequentially $\mathcal{S}_{\alpha}$-recognizable such that $\Pi(X)$ is dense in $\R_{>0}$. Then $X$ is ultimately periodic.
\end{prop}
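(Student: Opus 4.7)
The strategy is to combine the finite-state structure of a sequential recognizer for $\rho_\alpha(X)$ with the density of $\Pi(X)$ to extract an honest additive period. By \cref{gammafeas}, $\mc S_\alpha$ is feasible, so $\rho_\alpha(X)$ is $\omega$-regular; fix a total deterministic Muller automaton $\mc A=(Q,\Sigma,T,\set{q_0},F)$ over the alphabet $\set{0,\dots,M}\cup\set{\star}$ recognizing it sequentially. For each $x\in\R_{\geq 0}$, let $q(x)\in Q$ denote the state of $\mc A$ after processing the integer-part digits of $\rho_\alpha(x)$ followed by $\star$. Membership $x\in X$ is then determined by whether the continuation from $q(x)$ on the fractional part satisfies the Muller condition; in particular, if $q(y)=q(z)$ and the fractional parts of $\rho_\alpha(y),\rho_\alpha(z)$ agree, then $y\in X\iff z\in X$.

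I now aim to turn multiplicative stabilizers into additive periods. For every $\eps>0$, density of $\Pi(X)$ furnishes $t\in\Pi(X)$ with $1<t<1+\eps$. Given such a $t$ and any $x_0>0$, the points $x_0,tx_0$ are simultaneously in or outside $X$, and they differ by $(t-1)x_0$. Letting $x_0$ range over a sequence growing to infinity and pigeonholing on the finite set $Q$ (and compensating for the change in the fractional tail using the Pisot shift $x\mapsto\alpha x$, which acts on $\mc W_M$ by a one-step shift of digits), I expect to produce a pair $(x_0,tx_0)$ with $q(x_0)=q(tx_0)$ and matching fractional-part behavior, thereby yielding a genuine additive period $p:=tx_0-x_0$ for which $X$ is $p$-sum-stable on a sufficiently large range.

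The final step upgrades ``periods at arbitrarily large scales'' to ultimate periodicity. The set $E$ of $p\in\R$ for which $X\cap[a,\infty)$ is $p$-sum-stable on $[a,\infty)$ for some $a\in\R$ is a subgroup of $(\R,+)$, and the previous step populates $E$ with non-trivial elements. If $E$ is dense in $\R$, then sequential $\mc S_\alpha$-recognizability of $X$ together with closure of $\omega$-regular languages under boolean operations (\cref{fact:buechi}) forces $X$ on some tail to be either empty or the whole tail, so $X$ is trivially ultimately periodic. Otherwise $E$ is discrete, hence cyclic, generated by some $p_0>0$ which serves as the desired period.

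The main obstacle is the second step: the naive local ``period'' $(t-1)x_0$ depends on both $t$ and $x_0$, so it is non-trivial to extract scale-independent periods. One must carefully coordinate the pigeonhole argument on $Q$ with the Pisot shift action, and handle the Muller acceptance condition (which depends on states visited infinitely often during the fractional-part read) by tracking how the ``$0^\omega$-tails'' arising for integer $x$ interact with the post-integer-part states $q(x)$, and how this interaction is altered by multiplication by $t$. The eventual reduction to a single ultimate period by a gcd-like combination of many such locally extracted periods is the technical heart of the proof.
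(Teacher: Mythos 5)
There is a genuine gap at exactly the place you flag as the ``main obstacle'': you never actually produce an additive period that is \emph{small relative to the interval on which it holds}, and without that no propagation argument can start. Pigeonholing on $q(x_0)=q(tx_0)$ with matching fractional behavior only tells you $x_0\in X\iff tx_0\in X$, which you already know from $t\in\Pi(X)$; at best, equality of the post-$\star$ states for the integer parts of $x_0$ and $tx_0$ gives a translation equivalence between the two \emph{unit} intervals $[\floor{x_0},\floor{x_0}+1)$ and $[\floor{tx_0},\floor{tx_0}+1)$, i.e.\ a ``period'' $p=\floor{tx_0}-\floor{x_0}$ that is enormous compared to the length~$1$ window on which it is verified. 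Such a configuration cannot be fed into any gluing/propagation lemma (the paper's \cref{glueperint,ultperlocally} need $2p<b-a$), and your closing dichotomy does not repair this: your set $E$ of eventual periods is only useful if you can put a nonzero element into it (in which case you are already done, making the dense/discrete case split moot), and the claim that a dense $E$ plus regularity trivializes a tail of $X$ is itself unproven. So the technical heart of the proposition is missing, not merely deferred.

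The paper closes this gap with a different combinatorial device. It considers, for each state $q$, the set $A_q$ of $n$ with the automaton in state $q$ after reading $10^n\star$, and applies van der Waerden's theorem to find $q$ and $m,k$ with $m,m+k,m+2k\in A_q$. Since every real in $[\alpha^\ell,\alpha^\ell+1]$ has a representation beginning $10^\ell\star$, this yields $\alpha^m+z\in X\iff\alpha^{m+k}+z\in X\iff\alpha^{m+2k}+z\in X$ for \emph{all} $z\in[0,1]$ --- a translation equivalence between whole unit intervals, not single points. Then density of $\Pi(X)$ is used to pick $t\in\Pi(X)$ with $0<t-\alpha^k<\tfrac1{2\alpha^{m+k}}$; multiplying the three equivalent intervals by $t$ and comparing the resulting offsets $\eps'=(t-\alpha^k)\alpha^m$ and $\eps=(t-\alpha^k)\alpha^{m+k}$ produces a local period $\eps-\eps'$ on $[\alpha^{m+k}+\eps',\alpha^{m+k}+1]$ with $2(\eps-\eps')<1-\eps'$, i.e.\ a genuinely small period on a long interval. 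Only then does density of $\Pi(X)$ enter a second time, via \cref{ultperlocally} (built on \cref{glueperint}), to stretch this local period to a tail $[a,\infty)$. If you want to salvage your outline, the missing ingredient is precisely this van der Waerden step (or an equivalent way to get the \emph{same} state at two zero-run lengths in arithmetic progression), combined with choosing $t$ close to $\alpha^k$ rather than close to $1$.
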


\noindent Before we give the proof of \cref{prodstabultper}, we need two lemmas.

\begin{lem}\label{glueperint}
  Let $X\subseteq \R$, and let $a,b,c,d,p,q\in \R$ be such that 
  \begin{enumerate}
      \item $a<b<c<d$ and $c-b\ge p+q$,
      \item $X$ has local period $p$ on $[a,c]$, and
      \item $X$ has local period $q$ on $[b,d]$.
  \end{enumerate}
  Then $X$ has local period $p$ on $[a,d]$. The same statement holds for open intervals instead of closed intervals.
\end{lem}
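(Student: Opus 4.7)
My plan is a direct shifting argument. After noting that $p$-sum stability is symmetric under $p\mapsto -p$, I may assume $p,q\geq 0$. Fix $x\in[a,d]$ with $x+p\in[a,d]$; I will show $x\in X\iff x+p\in X$ by splitting into two cases. If $x+p\leq c$, then $x$ and $x+p$ both lie in $[a,c]$, and the conclusion follows immediately from the local period $p$ on $[a,c]$.

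The substantive case is $x+p>c$. Here the hypothesis $c-b\geq p+q\geq p$ forces $x\geq c-p\geq b$, so both $x$ and $x+p$ lie in $[b,d]$. Let $m\geq 1$ be the smallest integer with $x+p-mq\leq c$; by minimality $x+p-mq\in(c-q,c]$, which sits in $[b,c]$ since $c-q\geq b$. Subtracting $p$ yields $x-mq=(x+p-mq)-p\in(c-q-p,c-p]\subseteq[b,c]$, using that $c-p-q\geq b$. The same estimates show that every intermediate point in the chains $x,x-q,\ldots,x-mq$ and $x+p,x+p-q,\ldots,x+p-mq$ stays in $[b,d]$. Iterated applications of the period $q$ on $[b,d]$ then give $x\in X\iff x-mq\in X$ and $x+p\in X\iff x+p-mq\in X$, and a single application of the period $p$ on $[a,c]$ to the pair $(x-mq,\,x+p-mq)$ closes the chain.

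For the open-interval version, the same shifting strategy applies with strict inequalities; since the argument is trivial when $p=0$, one may assume $p,q>0$, and the hypothesis $c-b\geq p+q$ combined with strict positivity of $p,q$ keeps each intermediate shift strictly inside the open intervals. I do not foresee a substantial obstacle: the argument is essentially a Fine--Wilf style shifting on the real line, and the bound $c-b\geq p+q$ is precisely what is needed to place both shifted endpoints in the overlap $[b,c]$, where the two periodicities can be combined.
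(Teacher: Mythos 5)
Your argument is correct and is essentially the paper's proof: in the case $x+p>c$ you shift by multiples of $q$ into the overlap window (the paper lands $x-nq$ in $[b,b+q]\subseteq[b,c-p]$, you land $x+p-mq$ in $(c-q,c]$), apply the $p$-period there, and transport along $q$-steps inside $[b,d]$. One caveat, shared with the paper's own write-up: the lemma is tacitly about positive periods (for $p\le 0$ or $q\le 0$ it fails, and your opening reduction to $p,q\ge 0$ does not preserve hypothesis (1) anyway), and in the open-interval case with $c-b=p+q$ exactly the shifted points can land precisely on $b$ or on $c$, so the claimed strictness needs either $c-b>p+q$ (which is all that is used later in the paper) or an additional remark.
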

\begin{proof}
Let $x\in [a,d]$ be such that 
$x+p\in[a,d]$. We want to show that $x\in X$ if and only if $x+p\in X$.
If $x+p\le c$, then this follows immediately, since $X$ has local period $p$ on $[a,c]$. Now suppose $x+p>c$. Since $c-b\ge p+q$, there is $n\in\N$ such that
\[
x-nq\in[b,b+q]\subseteq[b,c-p].
\]
Then $x-nq,x-nq+p\in[a,c]$, and thus $x-nq\in X$ if and only if $x-nq+p\in X$. 
Since $X$ has local period $q$ on $[b,d]$, we have
  \[x\in X\iff x-nq\in X\iff x-nq+p\in X\iff x+p\in X.\qedhere\]
\end{proof}

\begin{lem}\label{ultperlocally}
Let $X \subseteq \R_{> 0}$, let $a,b,p\in \R_{>0}$ be such that 
\begin{enumerate}
    \item $a<b$, $2p<b-a$,
    \item $X$ has local period $p$ on $(a,b)$, and
    \item $\Pi(X)$ is dense in $\R_{>0}$.
\end{enumerate}
Then $X$ is ultimately periodic.
\end{lem}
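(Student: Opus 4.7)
The plan is to exploit the density of $\Pi(X)$ in $\R_{>0}$ to find a scaling factor $t \in \Pi(X)$ just above $1$, use $t$-product stability to transport the local-period-$p$ behavior from $(a,b)$ to the scaled interval $(ta, tb)$, where the period becomes $tp$, and then apply \cref{glueperint} repeatedly to extend local period $p$ across the entire half-line $(a, \infty)$, thereby establishing ultimate periodicity.

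First, I would record the transport principle: for any $t \in \Pi(X)$, product stability gives $y \in X \iff y/t \in X$ for all $y > 0$, so the hypothesis that $X$ has local period $p$ on $(a,b)$ immediately yields that $X$ has local period $tp$ on $(ta, tb)$.

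Second, I would choose $t$ carefully. Because $b - a > 2p$, we have $b - p > a + p$, so $(b-p)/(a+p) > 1$. By density of $\Pi(X)$ in $\R_{>0}$, pick $t \in \Pi(X)$ with $1 < t < (b-p)/(a+p)$. Multiplying out gives $b - ta > p + tp$, which is precisely the overlap length required by \cref{glueperint} (open-interval version) for the pair $(a,b)$ with period $p$ and $(ta, tb)$ with period $tp$. Gluing then produces local period $p$ on $(a, tb)$.

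Third, I would iterate with the same $t$. Assume inductively that $X$ has local period $p$ on $(a, t^n b)$ for some $n \geq 1$. Applying $t$-product stability yields local period $tp$ on $(ta, t^{n+1} b)$, whose overlap with $(a, t^n b)$ is $(ta, t^n b)$ of length at least $tb - ta = t(b-a) > 2tp > p(1+t)$, using $b - a > 2p$ and $t > 1$. Hence \cref{glueperint} again applies and extends local period $p$ to $(a, t^{n+1} b)$. Since $t > 1$, the right endpoint $t^n b$ tends to $\infty$, so $X$ has local period $p$ on $(a, \infty)$, meaning $X$ is ultimately periodic.

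The only delicate step is the initial one: ensuring the first overlap $b - ta$ exceeds $p + tp$. The strict inequality $b - a > 2p$ in the hypothesis is exactly the slack needed to locate a valid $t$ above $1$; once such a $t$ is fixed, the subsequent overlaps are automatically large enough and the iteration proceeds without further constraints.
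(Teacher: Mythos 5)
Your proof is correct and takes essentially the same route as the paper: both exploit density of $\Pi(X)$ to pick a multiplier $t\in\Pi(X)$ just above $1$, transport local period $p$ on an interval to local period $tp$ on the scaled interval, and extend via \cref{glueperint}, with the hypothesis $b-a>2p$ supplying exactly the needed overlap. The only difference is organizational: the paper argues by contradiction with $d=\sup\{c : X \text{ has local period } p \text{ on } (a,c)\}$ and a multiplier chosen relative to $d$, whereas you fix a single $t$ once and iterate directly, which works just as well.
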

\begin{proof}
  Let $B$ be the set of $c\in\R_{>a}$ such that $X$ has local period $p$ on $(a,c)$. It suffices to show $B$ unbounded. Towards a contradiction, assume that $B$ is bounded. Set $d:=\sup B$. Since $d\geq b$, we have $2p<d-a$. Let $\eps>0$ be such that $2p+\eps p<d-(1+\eps)a$ and $X$ is $1+\eps$-product stable. Since $X=(1+\eps)X$, we know that $X$ has local period $(1+\eps)p$ on the interval $\big((1+\eps)a,(1+\eps)d\big)$. Thus Lemma \ref{glueperint} yields that $X$ has local period $p$ on $\big(a,(1+\eps)d\big)$. This contradicts $d=\sup B$.
\end{proof}

\begin{proof}[Proof of \cref{prodstabultper}] 
Since $\mc S_{\alpha}$-normalization is sequentially regular, there is a deterministic Muller automaton $\mathcal{A}=(Q,(\Sigma_{\ceil\alpha}\cup \{\star \}),T,I,F)$ accepting a word if and only if it is an $\mc S_\alpha$-representation of an element of $X$. For each state $q\in Q$, let $A_q$ be the set of $n\in\N$ such that the automaton is in state $q$ after reading $10^n\star$.\\
 By van der Waerden's theorem on arithmetic progressions there are $q\in Q$ and $k,m \in \N$ such that $m,m+k,m+2k$ lie in $A_q$. Every real number in $[\alpha^\ell,\alpha^\ell+1]$ has an $\mc S_{\alpha}$-representation starting with $10^\ell\star$. Thus for $z\in[0,1]$, we have 
 \begin{equation}\label{eq:prodstabultper0}
 \alpha^m+z\in X \iff \alpha^{m+k}+z\in X \iff \alpha^{m+2k}+z\in X.    
 \end{equation}
  As $\Pi(X)$ is dense, there is $t\in \R$ such that $X$ is $t$-product stable and 
  \[0 < t-\alpha^k < \frac{1}{2\alpha^{m+k}}.\]
 For ease of notation, set $\eps:=(t-\alpha^k)\alpha^{m+k}$ and $\eps':=(t-\alpha^k)\alpha^m$. Since $0<\eps'<\eps<\frac12$, we have
\begin{equation}\label{eq:prodstabultper1}
 0<\eps-\eps'<\frac{1-\eps'}2.
\end{equation}
For $z\in\frac1t\inter{0,1-\eps}$, we obtain by \eqref{eq:prodstabultper0} and $t$-product stability of $X$ that
  \begin{align*}
    \alpha^{m+k}+\eps+tz\in X&\iff\alpha^{m+2k}+\eps+tz\in X\iff t(\alpha^{m+k}+z)\in X\\
    &\iff\alpha^{m+k}+z\in X\iff\alpha^m+z\in X\\
    &\iff t(\alpha^m+z)\in X\iff\alpha^{m+k}+\eps'+tz\in X.
  \end{align*}
  Thus $X$ has local period $\eps-\eps'$ on $\inter{\alpha^{m+k}+\eps',\alpha^{m+k}+1}$. With \eqref{eq:prodstabultper1} \cref{ultperlocally} finishes the proof.
\end{proof}

\subsection{Regular product stable}
Let $\alpha,\beta$ be multiplicatively independent Pisot numbers such that $\beta\notin\Q(\alpha)$. The goal of this subsection is to prove the following proposition.

\begin{prop}
\label{prodstabletriv}
Let $X\subseteq\R_{> 0}$ be sequentially $\mathcal{S}_{\alpha}$-recognizable and $\alpha$- and $\beta$-product stable in $\R_{>0}$. Then $X$ is either $\emptyset$ or $\R_{>0}$.
\end{prop}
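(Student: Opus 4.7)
I argue by contradiction: assume $X\neq\emptyset$ and $X\neq\R_{>0}$. The strategy is to exhibit ``incompatible'' eventual periods of $X$.

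Since $\Pi(X)$ contains the multiplicatively independent pair $\alpha,\beta$, it contains the dense subgroup $\langle\alpha,\beta\rangle$ of $\R_{>0}$, so $\Pi(X)$ is dense in $\R_{>0}$. By \cref{prodstabultper}, $X$ has a local period $p_0>0$ on $[a_0,\infty)$ for some $a_0$.

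Let $G\subseteq\R$ be the group of \emph{eventual periods}, i.e., those $q\in\R$ such that $X$ has local period $q$ on $[b,\infty)$ for some $b$. By \cref{glueperint}, $G$ is a subgroup of $\R$, and it is $\mc S_\alpha$-recognizable by \cref{thm:recdef}, being definable from $X$ in $\mc R_{\mc S_\alpha}$. The identity $\beta X=X$ yields $\beta G=G$: a period $q$ witnessed on $[b,\infty)$ gives a period $\beta q$ witnessed on $[\beta b,\infty)$. If $G$ were discrete, say $G=c\Z$ with $c>0$ minimal, then $\{c\}$ would be an $\mc S_\alpha$-recognizable singleton; since every nonempty $\omega$-regular language contains an ultimately periodic word, \cref{gammultper} forces $c\in\Q(\alpha)$. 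But $\beta c\in c\Z$ would then imply $\beta\in\Z$, contradicting $\beta\notin\Q(\alpha)\supseteq\Q$. Hence $G$ is dense in $\R$.

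Using $\alpha$-product stability, any $q\in G$ witnessed on $[b,\infty)$ also gives $\alpha^{-n}q\in G$ witnessed on $[\alpha^{-n}b,\infty)$; taking $n$ large makes $[\alpha^{-n}b,\infty)$ contain any prescribed $[A,\infty)$ while $\alpha^{-n}q$ shrinks. Thus for every $A>0$, the subgroup $G_A:=\{q\in\R : X \text{ has local period } q \text{ on } [A,\infty)\}$ is dense in $\R$, so $X\cap[A,\infty)$ is invariant under a dense subgroup of translations.

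The final step, which I expect to be the main obstacle, converts this dense invariance into set-theoretic triviality. The topological interior of $X\cap(A,\infty)$ is an open $G_A$-invariant subset of $(A,\infty)$, hence equal to $\emptyset$ or $(A,\infty)$; likewise for $X^c\cap(A,\infty)$. To exclude that both are empty, I pass to the circle $\R/p_0\Z$ and view $X\bmod p_0$ as a bounded $\mc S_\alpha$-recognizable $G_A/p_0\Z$-invariant subset: ergodicity of dense-subgroup translations on the circle forces Lebesgue measure $0$ or $p_0$. Combined with the uniform-density identity $\mu(X\cap[0,\alpha T])=\alpha\mu(X\cap[0,T])$ coming from $\alpha$-product stability, one of $X,X^c$ is Lebesgue-null. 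A tameness argument for bounded $\mc S_\alpha$-recognizable sets (using the Pisot property of $\alpha$) upgrades this measure-theoretic dichotomy to set-theoretic triviality, giving $X\cap(A,\infty)\in\{\emptyset,(A,\infty)\}$. Finally, $\alpha$-product stability with $\alpha>1$ propagates this to all of $\R_{>0}$ by sending $A\to 0$ through scaling by $\alpha^{-n}$, contradicting our assumption that $X$ is nontrivial.
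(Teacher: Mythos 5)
Your opening moves are sound and parallel the paper: density of $\Pi(X)$ from multiplicative independence, ultimate periodicity from \cref{prodstabultper}, and using $\alpha$-product stability to push local periods down to any threshold (the paper does this with \cref{glueperint} inside \cref{sumstable}). The genuine gap is the final step, which you yourself flag as the obstacle: ``ergodicity forces measure $0$ or full, and a tameness argument for bounded recognizable sets upgrades this to set-theoretic triviality.'' No such upgrade is available from what you have established, and the principle you are invoking is false in the closest analogous setting. Consider the set $D$ of positive dyadic rationals: it is sequentially $\mc S_2$-recognizable (its normalized representations are exactly the words $u\star v0^\omega$), it is $2$- and $3$-product stable in $\R_{>0}$, its group of eventual periods is the dense group of dyadic rationals, it has Lebesgue measure zero, and it is neither $\emptyset$ nor $\R_{>0}$. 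Every property your argument extracts from $X$ --- recognizability, product stability under two multiplicatively independent numbers, density of $G_A$ for every $A$, the measure dichotomy --- is satisfied by $D$, so no continuation along your lines can succeed unless it uses the hypothesis $\beta\notin\Q(\alpha)$ in an essential way. Your proposal only invokes that hypothesis once, to exclude discreteness of $G$ (and there only irrationality of $\beta$ is used); in the obstruction just described $G$ is already dense, so that step carries no weight. In short, density of the period group is strictly weaker than what is needed.

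The paper closes this gap differently: it shows the period group is literally all of $\R$, not merely dense. In \cref{sumstable} one rescales so that $1\in\Sigma(Y)$, and then --- this is where $\beta^{-1}\in\Sigma(Y)\setminus\Q(\alpha)$ is used --- the $\omega$-regular presentation of $\Sigma(Y)\cap[0,1]$ from \cref{condomegreg}(4) must contain a component $L_i$ with two distinct words of the same length (otherwise all its ultimately periodic words would force $\Sigma(Y)\cap[0,1]\subseteq\Q(\alpha)$ by \cref{gammultper}, contradicting $\beta^{-1}\in\Sigma(Y)$). Pumping these two words produces an $x\in\Q(\alpha)^\times\cap\Sigma(Y)$ with $x/(\alpha^{mn}-1)\in\Sigma(Y)$ for all $n$; after a further rescaling this shows that every real with ultimately periodic $\mc S_\alpha$-representation lies in the period group, and \cref{regtestultper} then yields $\Sigma(Z)=\R$, whence triviality of $X$. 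If you want to salvage your outline, you must replace the ergodicity/tameness step by an argument of this kind that genuinely exploits $\beta\notin\Q(\alpha)$; measure-theoretic invariance alone cannot distinguish $X$ from sets like $D$.
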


\noindent Our proof is based on the argument given \cite[Section 6.2]{BBB10}.  We need the following lemma first. 

\begin{lem}\label{sumstable}
Let $X\subseteq\R_{>0}$ be sequentially $\mathcal{S}_{\alpha}$-recognizable and $\alpha$- and $\beta$-product stable in $\R_{>0}$. 
  Then there is $Y\subseteq\R_{>0}$ such that
  \begin{enumerate}
  \item $Y$ is sequentially $\mathcal{S}_{\alpha}$-recognizable,
  \item If $Y=\emptyset$, then $X=\emptyset$, and if $Y=\R_{>0}$, then $X=\R_{>0}$,
  \item $Y$ is $\alpha$- and $\beta$-product stable in $\R_{>0}$,
  \item $\alpha^k,\beta^k\in\Sigma(Y)$ for every $k\in\Z$, and
  \item there are $m\in\N$ and $x\in\Q(\alpha)^\times\cap\Sigma(Y)$ such that $\frac x{\alpha^{mn}-1}\in\Sigma(Y)$ for all $n\in\N_{>0}$.
 \end{enumerate}
 \end{lem}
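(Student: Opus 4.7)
I would begin by applying \cref{prodstabultper}. Since $X$ is both $\alpha$- and $\beta$-product stable and $\alpha,\beta$ are multiplicatively independent irrational Pisot numbers, $\Pi(X)\supseteq\langle\alpha,\beta\rangle$ is dense in $\R_{>0}$. Consequently $X$ is ultimately periodic, so I fix $a_0,p_0>0$ with $X$ having local period $p_0$ on $[a_0,\infty)$.

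Next I would amplify the period group using product stability. If $q$ is a local period of $X$ on $[a,\infty)$, then from $\alpha X=X$ the value $\alpha q$ is a local period on $[\alpha a,\infty)$, and similarly for $\alpha^{-1}$ and $\beta^{\pm 1}$. Combining overlapping periods via \cref{glueperint} and using that each $\Sigma(X\cap[a,\infty))$ is a subgroup of $(\R,+)$, the collection $G:=\bigcup_a\Sigma(X\cap[a,\infty))$ is a nontrivial $\Z[\alpha^{\pm 1},\beta^{\pm 1}]$-submodule of $\R$ containing $\alpha^j p_0$ and $\beta^j p_0$ for every $j\in\Z$.

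My candidate for $Y$ is
\[
Y:=(X+G)\cap\R_{>0}=\{x>0:\exists\,g\in G,\ x-g\in X\}.
\]
Property (3) is immediate: $\alpha Y=\alpha X+\alpha G=X+G=Y$ since $\alpha G=G$, and analogously for $\beta$. Property (2) holds in one direction because $Y\supseteq X$, and in the other because $G$ is countable, so $X+G$ cannot exhaust $\R_{>0}$ unless $X$ already does (via a measure/category argument applied to the Borel set $X$). Property (4) is witnessed by $G\subseteq\Sigma(Y)$: once the factor $p_0$ is absorbed via the $\Z[\alpha^{\pm 1},\beta^{\pm 1}]$-module structure of $G$, the group $G$ contains $\alpha^k$ and $\beta^k$ for all $k\in\Z$. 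For (5), once (1) is established, $Y$ is Borel, so $\Sigma(Y)$ is a closed subgroup of $(\R,+)$; choosing $x\in\Q(\alpha)^\times\cap G$ and $m=1$, the convergent geometric series $x/(\alpha^n-1)=\sum_{k\ge 1}x\alpha^{-kn}$ places $x/(\alpha^n-1)$ in $\overline{G}\subseteq\Sigma(Y)$.

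The hard part will be establishing (1), sequential $\mc S_\alpha$-recognizability of $Y$, because $G$ contains $\beta^k$-translates of $p_0$ and $\beta\notin\Q(\alpha)$. To address this I would use \cref{thm:recdef} to reduce (1) to definability of $Y$ in $\mc R_{\mc S_\alpha}$, and exploit $\beta X=X$ to re-express additive $\beta^k p_0$-translates as $\beta^k$-scalar multiplications within $X$, thereby producing a first-order formula in $\mc R_{\mc S_\alpha}$. A related subtlety, which I would handle in the same step, is the choice of period: $p_0$ as produced by \cref{prodstabultper} need not lie in $\Q(\alpha)^\times$, so one may need to exploit the group structure of $\Sigma(X\cap[a,\infty))$ and the $\alpha$-closure of $G$ to pass to a definable period in $\Q(\alpha)^\times\cap G$, which is required by (5).
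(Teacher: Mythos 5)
Your opening move (applying \cref{prodstabultper} to get ultimate periodicity) matches the paper, but from there the construction $Y=(X+G)\cap\R_{>0}$ diverges from the paper's much simpler choice --- the paper rescales, taking $Y=\frac1qX$ for a suitable $q$ --- and several of your steps do not hold up. For (1), membership in $X+G$ is an existential quantifier over the countable group $G$, i.e.\ an infinite disjunction over the exponents of $\alpha^j\beta^kp_0$; this is not a first-order condition over $\mc R_{\mc S_\alpha}$, and your plan to trade a translate by $\beta^kp_0$ for a scalar multiplication by $\beta^{-k}$ inside $X$ needs multiplication by $\beta$, which is not available (it is not $\mc S_\alpha$-recognizable --- if it were, the whole Cobham-type dichotomy would collapse). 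For (2), countability of $G$ does not prevent $X+G=\R_{>0}$ while $X\subsetneq\R_{>0}$ (e.g.\ $X$ missing one bounded interval whose translates are covered); the measure/category argument only applies when $X$ is null or meager. For (4), $G$ contains $\alpha^kp_0$ and $\beta^kp_0$, not $\alpha^k$ and $\beta^k$: no module structure "absorbs" the factor $p_0$, and the needed rescaling by an element of $\Q(\alpha)^\times\cap\Sigma$ --- which you defer to the last sentence --- is precisely the step the paper carries out, by noting that $\Sigma((X-x)\cap\R_{\ge0})$ is a nonempty sequentially $\mc S_\alpha$-recognizable set, hence by \cref{regtestultper} and \cref{gammultper} contains some $q\in\Q(\alpha)$; setting $Y=\frac1qX$ makes (2) trivial and, after a gluing argument with \cref{glueperint}, gives $1\in\Sigma(Y)$ and hence (4) from product stability.

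The most serious gap is (5). You assert that $\Sigma(Y)$ is closed because $Y$ is Borel; that is false ($Y=\Q_{>0}$ has $\Sigma(Y)=\Q$), so you cannot pass from the partial sums of $\sum_{k\ge1}x\alpha^{-kn}$ lying in $G$ to the limit $x/(\alpha^{n}-1)$ lying in $\Sigma(Y)$. Item (5) is the real content of the lemma, and the paper obtains it by a genuinely different mechanism: once $1\in\Sigma(Y)$, product stability gives $\beta^{-1}\in\Sigma(Y)\cap[0,1]$, a point not in $\Q(\alpha)$; since $\Sigma(Y)\cap[0,1]$ is $\omega$-regular, \cref{gammultper} forces some loop language $L_i$ in its decomposition $\bigcup_iK_iL_i^\omega$ to contain two distinct words $v_1,v_2$ of equal length, and pumping gives $[(v_1^nv_2)^\omega]_{\mc S_\alpha}\in\Sigma(Y)$ for all $n$; an explicit computation then produces $x\in\Q(\alpha)^\times\cap\Sigma(Y)$ with $x/(\alpha^{n+1}-1)\in\Sigma(Y)$. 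Nothing in your proposal substitutes for this pumping step, so the proof as written does not go through.
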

   
 \begin{proof}
  Note that $X$ is product stable for the dense set $\alpha^{\Z}\beta^{\Z}$. By \cref{prodstabultper} we know that $X$ is ultimately periodic. Let $x\in \R_{\ge0}$ and $p\in \R_{>0}$ be such that for all $y\in \R_{\ge x}$
  \[y\in X\iff y+p\in X.\]
  Replacing $x$ by a larger number, we may assume $x\in\Q(\alpha)$. Thus $\Sigma((X-x)\cap\R_{\geq 0})$ is sequentially $\mc S_\alpha$-recognizable and nonempty, and by \cref{regtestultper,gammultper} it contains some $q\in\Q(\alpha)$. Set $Y:=\frac1qX$. It is easy to see that $Y$ satisfies (1)-(3) and has local period $1$ on $[\frac xq,\infty)$. Let $k\in \N$. By (3) the set $Y$ also has local period $\alpha^{-k}$ on $[\frac x{\alpha^kq},\infty)$, and thus using \cref{glueperint} we see that it has local period $1$ on $\rointer{\frac x{\alpha^kq},\infty}$. Since $k$ is arbitrary, we get that $1\in\Sigma(Y)$. Now (4) follows from (3).\newline

\noindent 
  We now establish (5). Since $\Sigma (Y)\cap[0,1]$ is sequentially $\mathcal{S}_{\alpha}$-recognizable, we obtain by \cref{condomegreg} nonempty regular languages $K_1,\dots,K_m,L_1,\dots,L_m \subseteq \{0,\dots,\floor{\alpha}\}^*$ such that $L_i$ does not contain the empty word, $L_iL_i=L_i$ for each $i\in \{1,\dots,m\}$, and
  \[\Sigma (Y)\cap[0,1]=\bigcup_{i=1}^m\set{[wv_1v_2\cdots]_{\mc S_{\alpha}} \ : \ w\in K_i,v_1,v_2,\dots \in L_i}.\]
There is $i\in \{1,\dots, m\}$ such that $L_i$ contains at least two elements of the same length. Indeed, otherwise all words in $\bigcup_{i=1}^mK_iL_i^\omega$ are ultimately periodic, and hence by \cref{gammultper} 
\[\Sigma(Y)\cap[0,1]\subseteq\Q(\alpha),\]
contradicting $\beta^{-1}\in\Sigma(Y)\cap[0,1]$.\newline

\noindent  Now fix $i\in\{1,\dots,m\}$ such that $L_i$ contains two distinct elements $v_1,v_2$ of the same length, and fix $w\in K_i$. Then for all $n\in\N$
\[
[w(v_1^nv_2)^\omega]_{\mc S_\alpha} \in \Sigma (Y)\cap[0,1].
\]
Note that every number with terminating $\mc S_{\alpha}$-representation lies in $\Sigma(Y)$, and even in $\Pi(\Sigma(Y))$. Applying this to $[w0^\omega]_{\mc S_\alpha}$, we obtain $[(v_1^nv_2)^\omega]_{\mc S_\alpha}=[w(v_1^nv_2)^\omega]_{\mc S_\alpha}-[w0^\omega]_{\mc S_\alpha}\in\Sigma(Y)$ for all $n\in\N$. And replacing $\alpha$ by $\alpha^{\abs{v_1}}$ we may assume $\abs{v_1}=\abs{v_2}=1$. 
  Now set
 \[
 x:=\Big([v_20^\omega]_{\mc S_\alpha}\alpha-[v_10^\omega]_{\mc S_\alpha}\alpha\Big)(\alpha-1).
 \]
Since $x$ has a terminating $\mc S_{\alpha}$-representation, we have that $x\in\Sigma(Y)$.
Then   \begin{align*}
  \frac x{\alpha^{n+1}-1}+[v_10^\omega]_{\mc S_\alpha}\alpha &= \Big([v_20^\omega]_{\mc S_\alpha}\alpha-[v_10^\omega]_{\mc S_\alpha}\alpha\Big)\frac{\alpha-1}{\alpha^{n+1}-1}+\frac{[v_10^\omega]_{\mc S_\alpha}\alpha^{2}(\alpha^{n}-1+\alpha^{-1}(\alpha-1))}{\alpha^{n+1}-1}  \\
    &=(\alpha-1)\frac{[v_20^\omega]_{\mc S_\alpha}\alpha+[v_10^\omega]_{\mc S_\alpha}\alpha^2\frac{\alpha^n-1}{\alpha-1}}{\alpha^{n+1}-1} \\
    &=(\alpha-1)\alpha^{n+1}\frac{[v_20^\omega]_{\mc S_\alpha}\alpha^{-n}+\sum_{i=0}^{n-1}[v_10^\omega]_{\mc S_\alpha}\alpha^{-i}}{\alpha^{n+1}-1} \\
    &=(\alpha-1)\alpha^{n+1}\frac{[v_1^nv_20^\omega]_{\mc S_\alpha}}{\alpha^{n+1}-1}\\
    &=(\alpha-1)\sum_{i=0}^\infty[v_1^nv_20^\omega]_{\mc S_\alpha}\alpha^{-i(n+1)} \\
    &=(\alpha-1)[(v_1^nv_2)^\omega]_{\mc S_\alpha}\in\Sigma(Y)
  \end{align*}
and so $\frac x{\alpha^{n+1}-1}\in\Sigma(Y)$.
\end{proof}

\begin{proof}[Proof of Proposition \ref{prodstabletriv}]
Let $Y \subseteq \R_{> 0}$ be as given by \cref{sumstable}. Replacing $\alpha$ by $\alpha^m$, let $x\in\Q(\alpha)^\times\cap\Sigma (Y)$ be such that $\frac x{\alpha^n-1}\in\Sigma (Y)$ for all $n\in \N_{>0}$. Set $Z:=\frac1xY$.  Note that $\Sigma(Z)$ is an $\alpha$-and $\beta$-product stable subgroup and $\Sigma(Z)=\frac1x\Sigma(Y)$. In particular, $1\in\Sigma(Z)$. Hence $\Sigma(Z)\cap[0,1]$ contains all numbers with terminating $\mc S_{\alpha}$-representation, and is product stable for them.\newline

\noindent Now we show that all numbers with purely periodic $\mc S_{\alpha}$-representation are in $\Sigma(Z)$. Indeed, for $v \in \{0,\dots,\floor{\alpha}\}^*$ we have that
  \[[v^\omega]_{\mc S_\alpha}=\sum_{i\ge1}[v0^\omega]_{\mc S_\alpha}\alpha^{-i\abs v}=\frac1{\alpha^{\abs v}-1}\cdot[v0^\omega]_{\mc S_\alpha}\in\Sigma (Z),\]
  because $\frac1{\alpha^n-1}\in\Sigma (Z)$ for all $n\in \N_{>0}$.\\
  All numbers with ultimately periodic $\mc S_{\alpha}$-representation lie in $\Sigma(Z)$, as they may be written as sums of such numbers and ones with terminating representation. By \cref{regtestultper} we conclude that $\Sigma(Z)=\R$ and the claim follows.
\end{proof}

\section{Proof of the main theorems}

In this section we finish the proof of Theorem A and B. By \cref{definbounded} we just need to handle the cases of subsets of $\N^n$ and subsets of $[0,1]^n$. 

\subsection{Subsets of $\N^n$}
We first consider the case of a subset of $\N^n$. Here we prove the following analogue Bès' Cobham-Semënov theorem for linear numeration systems \cite{Bes-CS}. 
\begin{thm}  \label{cobhamsemenov}
 Let $\alpha,\beta$ be quadratic irrational numbers such that $\Q(\alpha)\ne\Q(\beta)$. Then every subset of $\N^n$ that is definable in both $\mc{S}_{\alpha}$ and $\mc{S}_\beta$, is definable in $(\N,+)$.
\end{thm}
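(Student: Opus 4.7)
The plan is to adapt the main argument of Bès \cite{Bes-CS} for linear numeration systems associated to Pisot numbers. By \cref{alphgammreg,gammpowreg}, I may replace $\alpha,\beta$ by the Pisot units $\norm{\Gamma_\alpha},\norm{\Gamma_\beta}$ in $\Q(\alpha),\Q(\beta)$ without changing $\mc S_\alpha$- or $\mc S_\beta$-recognizability on $\N^n$; in particular $\mc S_\alpha,\mc S_\beta$ are then feasible by \cref{gammafeas}, and $\alpha,\beta$ are multiplicatively independent by \cref{quadirrmulind}. The simplification afforded by the stronger hypothesis $\Q(\alpha)\ne\Q(\beta)$ is that, by \cref{gammultper}, no element of $\Q(\beta)\setminus\Q$ admits an ultimately periodic $\mc S_\alpha$-representation.

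First I would reduce to the case $n=1$ by a Semenov-style argument, following \cite[Sect.~3]{Bes-CS}. Closure of both recognizabilities and of Presburger-definability under boolean combinations, projections, and intersection with Presburger sets allows an induction on $n$, so it suffices to prove the following claim: every $X\subseteq\N$ that is both $\mc S_\alpha$- and $\mc S_\beta$-recognizable is ultimately periodic. (Ultimate periodicity of a subset of $\N$ immediately yields $(\N,+)$-definability.) For the one-dimensional step, take total deterministic Muller automata $\mc A_\alpha,\mc A_\beta$ for the $\mc S_\alpha$- and $\mc S_\beta$-normalizations of elements of $X$. Pigeonholing on states visited after reading prefixes of the form $10^n\star$, exactly as in the opening of the proof of \cref{prodstabultper}, yields arithmetic progressions $m,m+k,m+2k$ on which $\mc A_\alpha$ returns to a common state, giving equivalences $\alpha^m+z\in X\iff\alpha^{m+k}+z\in X$ for $z\in[0,1]$, and analogous periodicities for $\mc A_\beta$ with some $m',k'$.

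The main obstacle is combining these two families of scaled shift-periodicities into a genuine positive-integer period for $X$ on a tail $\N_{\ge N}$. I would adapt the sum-stability machinery of \cref{sumstable,prodstabletriv}: the $\alpha$-periodicities populate $\Sigma(X)\cap\Q(\alpha)$, the $\beta$-periodicities populate $\Sigma(X)\cap\Q(\beta)$, and $\Sigma(X)$ is an additive subgroup that is sequentially $\mc S_\alpha$-recognizable, so the hypothesis $\Q(\alpha)\cap\Q(\beta)=\Q$ forces $\Sigma(X)$ to intersect $\Q_{>0}$ nontrivially; clearing denominators inside the subgroup then produces an integer period. This is the step where Bès' linear-numeration argument must be revisited most carefully: his syntactic-congruence analysis exploits the closed form $U_n=c\alpha^n+O(\mu^n)$ with a single dominant eigenvalue, whereas here one works directly with $\mc S_\alpha$-normalization and uses \cref{gammultper} to exclude ``fake'' periodicities that would otherwise place elements of $\Q(\alpha)\setminus\Q$ into $\Sigma(X)\cap\Q(\beta)$. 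Once ultimate periodicity of $X\subseteq\N$ is in hand, the dimensional reduction completes the proof.
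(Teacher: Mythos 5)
There are two genuine gaps, and together they mean the proposal does not reach the theorem. First, your reduction from $\N^n$ to $\N$ is not an argument: closure of recognizability and of Presburger definability under boolean operations, projections and intersections with Presburger sets does not set up an induction on $n$, and the statement ``every simultaneously recognizable subset of $\N$ is ultimately periodic'' does not imply the multidimensional statement --- a subset of $\N^2$ all of whose sections are ultimately periodic need not be definable in $(\N,+)$, and you give no Muchnik-style uniformity that would repair this. This is exactly the gap between Cobham's and Sem\"enov's theorems; \cite[Section 3]{Bes-CS}, which you cite as the model, is not a reduction to dimension one but a genuinely multidimensional argument, and to run it one needs the analogue of B\`es' Proposition 2.4, namely an asymptotic growth estimate for values of pumped words. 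That estimate is precisely the paper's new ingredient, \cref{pumping}: $[uv^{P(\alpha)n}w]=(C+o(1))\norm{\Gamma_\alpha}^{\abs{v}n}$, with pumping in multiples of the continued-fraction period and proved via \cref{shifting}; the theorem is then obtained by rerunning B\`es' proof with $uv^nw$ replaced by $uv^{P(\alpha)n}w$. Your proposal never states or proves any such estimate, so even the multidimensional route you point to cannot be executed from what you write.

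Second, the one-dimensional core of your plan transplants machinery that only makes sense for subsets of $\R_{>0}$. A nonempty $X\subseteq\N$ is never $\alpha$-product stable in $\R_{>0}$ (if $x\in X\setminus\{0\}$ then $\alpha x\notin\N$), and $\Sigma(X)$, computed in $\R_{>0}$, contains no irrational element (if $x\in X$ and $t$ is irrational then $x+t\notin\N$); hence \cref{sumstable,prodstabletriv} are inapplicable, the phrase ``the $\alpha$-periodicities populate $\Sigma(X)\cap\Q(\alpha)$'' is vacuous, and there is nothing for the hypothesis $\Q(\alpha)\cap\Q(\beta)=\Q$ to intersect. Moreover, the $10^n\star$ pigeonhole of \cref{prodstabultper} presupposes a sequential automaton accepting exactly the $\mc S_\alpha$-representations of elements of $X$ inside $\R_{\ge0}$, whereas for $X\subseteq\N^n$ the hypotheses of the theorem give recognizability via Ostrowski (equivalently, parallel) representations; these genuinely differ --- indeed your own pigeonhole shows that $\N$ itself is not sequentially $\mc S_\gamma$-recognizable, since two large indices $m<m+k$ reaching the same state after $10^m\star$ and $10^{m+k}\star$ would force $\gamma^{m+k}-\gamma^m\in\Z$. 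Even granting such an automaton, the resulting equivalences relate the integer $\gamma^m+z$ to its irrational translate $\gamma^{m+k}+z$ and therefore yield no integer periods for a subset of $\N$. So the key combination step you yourself identify as the main obstacle is not overcome, independently of the dimension-reduction issue.
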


\noindent Although we do not see how to obtain our result as a corollary, the proof of \cite[Theorem 3.1]{Bes-CS} can be adjusted straightforwardly once we prove the following analogue of \cite[Proposition 2.4]{Bes-CS}.
 
\begin{prop}\label{pumping}
Let $\alpha$ be a quadratic irrational number, and let $u,v,w\in\set{0,\dots,\floor\alpha}^*$ be such that $\abs v\ne0$, and either $[u]_{\mc S_\alpha}\ne0$ or $[v]_{\mc S_\alpha}\ne0$. Then there is $C>0$ such that 
  \[
  [uv^{P(\alpha) n}w]_{\mc S_\alpha}=(C+o(1))\norm {\Gamma_\alpha}^{\abs vn}.
  \]
\end{prop}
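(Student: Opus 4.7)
The proof is a direct asymptotic computation in $n$.

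Using the additive rule $[xy]_{\mc S_\alpha}=[x]_{\mc S_\alpha}\alpha^{|y|}+[y]_{\mc S_\alpha}$ for concatenation of finite words, together with the geometric-series evaluation $[v^m]_{\mc S_\alpha}=[v]_{\mc S_\alpha}(\alpha^{m|v|}-1)/(\alpha^{|v|}-1)$, I would expand
\[
[uv^{P(\alpha)n}w]_{\mc S_\alpha}=\alpha^{P(\alpha)n|v|}\cdot C_0+D,
\]
where $C_0:=[u]_{\mc S_\alpha}\alpha^{|w|}+\tfrac{[v]_{\mc S_\alpha}\alpha^{|w|}}{\alpha^{|v|}-1}$ and $D$ is an $n$-independent remainder.

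Positivity of $C_0$ is then immediate: since the digits of $u,v$ are non-negative, both $[u]_{\mc S_\alpha}$ and $[v]_{\mc S_\alpha}$ are non-negative; the denominator $\alpha^{|v|}-1$ is positive because $\alpha>1$; and the hypothesis that at least one of $[u]_{\mc S_\alpha}$ or $[v]_{\mc S_\alpha}$ is nonzero forces $C_0>0$.

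The remaining task is to pass from the growth rate $\alpha^{P(\alpha)n|v|}$ to $\norm{\Gamma_\alpha}^{|v|n}$. I would effect this via the eigenvalue analysis behind \cref{charpoly}: after shifting $\alpha$ to an equivalent quadratic irrational with purely periodic continued fraction expansion (an operation that preserves $\Gamma_\alpha$), the vector $(\alpha,1)^\top$ is an eigenvector of $\Gamma_\alpha$ with eigenvalue equal to the dominant one, $\norm{\Gamma_\alpha}$, by \cref{charpoly}. Combining this with the Möbius fixed-point identity $\alpha=(p_{P(\alpha)-1}\alpha+p_{P(\alpha)-2})/(q_{P(\alpha)-1}\alpha+q_{P(\alpha)-2})$ characterizing purely periodic $\alpha$ provides the algebraic passage relating $\alpha^{P(\alpha)}$ to $\norm{\Gamma_\alpha}$, and setting $C:=C_0$ completes the proof.

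The main obstacle is this final identification step, which ties the arithmetic quantity $\alpha^{P(\alpha)}$ to the spectral data of $\Gamma_\alpha$ via continued-fraction theory; the earlier power-series expansion and positivity verification are routine once the correct bookkeeping for the head-tail splitting of $uv^{P(\alpha)n}w$ is in place.
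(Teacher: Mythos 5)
Your plan breaks down at exactly the step you flag as the "main obstacle": there is no algebraic passage from $\alpha^{P(\alpha)}$ to $\norm{\Gamma_\alpha}$, because these numbers are genuinely different in general. For purely periodic $\alpha$ the vector $(\alpha,1)^\top$ is indeed an eigenvector of $\Gamma_\alpha$, but its eigenvalue is $q_{P(\alpha)-1}\alpha+q_{P(\alpha)-2}$, not $\alpha^{P(\alpha)}$. Concretely, for $\alpha=[\overline{2;1}]=1+\sqrt3$ one has $P(\alpha)=2$, $\Gamma_\alpha=\bigl(\begin{smallmatrix}3&2\\1&1\end{smallmatrix}\bigr)$, $\norm{\Gamma_\alpha}=2+\sqrt3$, while $\alpha^{2}=4+2\sqrt3=2\norm{\Gamma_\alpha}$; more structurally, $\norm{\Gamma_\alpha}$ is a unit of $\mc O_\alpha$ by \cref{charpoly}, whereas $\alpha^{P(\alpha)}$ need not be (here $1+\sqrt3$ has norm $-2$). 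Since your expansion produces growth of order $\bigl(\alpha^{P(\alpha)}\bigr)^{\abs v n}$ and the proposition asserts $\norm{\Gamma_\alpha}^{\abs v n}$, the two differ by an exponentially growing factor, so no choice of the constant $C$ can repair the discrepancy.

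The root of the problem is the interpretation of $[\,\cdot\,]_{\mc S_\alpha}$. Your identities $[xy]_{\mc S_\alpha}=[x]_{\mc S_\alpha}\alpha^{\abs y}+[y]_{\mc S_\alpha}$ and the geometric-series evaluation of $[v^m]_{\mc S_\alpha}$ are those of the base-$\alpha$ power system, and under that reading the statement itself is false (take $\alpha=\sqrt2$, $u=v=1$, $w$ empty: the values grow like $\sqrt2^{\,n}$, not like $\norm{\Gamma_{\sqrt2}}^{n}=(1+\sqrt2)^{n}$). As the proposition is proved and used in \cref{cobhamsemenov} (the analogue of Bès's Proposition 2.4 for linear numeration systems), the digit string is evaluated against the convergent denominators $q_k$, i.e.\ the integer part of the Ostrowski system; this is precisely where $\norm{\Gamma_\alpha}$ enters, through \cref{shifting}, which gives $q_{nP(\alpha)+k}=C\norm{\Gamma_\alpha}^{n}+D(\det\Gamma_\alpha\norm{\Gamma_\alpha})^{-n}$. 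In that system there is no exact concatenation or geometric-series formula, since it is not a fixed-base positional system: the paper instead reduces to empty $w$, applies \cref{shifting} to $u0^{P(\alpha)n}$ and $v0^{P(\alpha)n}$, and then estimates $\sum_i[v0^{P(\alpha)(n-i)\abs v}]_{\mc S_\alpha}$ by splitting it into a head with controlled asymptotics, a middle range, and a small tail, with explicit $\eps$-bounds. That error analysis, not routine bookkeeping, is the actual content of the proof, and it is missing from your proposal along with the (unfixable) spectral identification.
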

\begin{proof}
  We first argue that we may assume $w$ is the empty word. Indeed, first replace $w$ by the prefix of $v^{\abs w}$ of length $\abs w$. This will only change $[uv^{P(\alpha)n}w]_{\mc S_\alpha}$ by an additive constant. After this we may replace $v$ by the last $\abs v$ digits of $vw$, potentially enlarging $u$.\newline

  \noindent By \cref{shifting} there are non-zero constants $C',C''\in \R$ such that 
  \[[u0^{P(\alpha) n}]_{\mc S_\alpha}=(C'+o(1))\norm {\Gamma_\alpha}^n \text{ and } [v0^{P(\alpha)n}]_{\mc S_\alpha}=(C''+o(1))\norm {\Gamma_\alpha}^n.\]
   Since $[u]_{\mc S_\alpha}\ne0$ or $[v]_{\mc S_\alpha}\ne0$, either $C'$ or $C''$ is positive. Set 
   \[
   C=C'+C''((1-\norm {\Gamma_\alpha})^{-1}-1).
   \]
   Let $\varepsilon \in (0,1)$. By \cref{charpoly} we know that $\norm {\Gamma_\alpha}>1$. Hence the geometric series for $\norm {\Gamma_\alpha}^{-\abs v}$ converges. Let $M\in \N$ be such that for all $m\in\N_{\ge M}$ 
   \[
   \abs{(1-\norm {\Gamma_\alpha})^{-\abs v}-\sum_{i=0}^M\norm {\Gamma_\alpha}^{-i\abs v}}<\eps \text{ and }\abs{[v0^{P(\alpha)m\abs v}]_{\mc S_\alpha}-C''\norm {\Gamma_\alpha}^{m\abs v}}<\norm {\Gamma_\alpha}^{m\abs v}
   \]
Let $N\in \N_{\ge M}$ be such that for all $n\in \N_{\ge N-M}$
\[
\abs{u0^{P(\alpha)n}-C'\norm {\Gamma_\alpha}^n}<\eps\norm {\Gamma_\alpha}^n \text{ and  }\abs{[v0^{P(\alpha)n}]_{\mc S_\alpha}-C''\norm {\Gamma_\alpha}^n}<\eps M^{-1}\norm {\Gamma_\alpha}^{\abs vn}.\]
Finally, let $Q\in \N_{\ge N}$ be such that 
\[ 
[v^{P(\alpha)N}]_{\mc S_\alpha}<\eps\norm {\Gamma_\alpha}^{\abs vQ}.
\]
Now for all $n\in \N_{\ge Q}$
  \begin{align*}
    \abs{[uv^{P(\alpha)n}]_{\mc S_\alpha}-C\norm {\Gamma_\alpha}^{\abs vn}}=&\Big|[u0^{\abs vP(\alpha)n}]_{\mc S_\alpha}+\sum_{i=1}^M[v0^{P(\alpha)(n-i)\abs v}]_{\mc S_\alpha}\\
    &\quad+\sum_{i=M+1}^{n-N}[v0^{P(\alpha)(n-i)\abs v}]_{\mc S_\alpha}+[v^{P(\alpha)N}]_{\mc S_\alpha}\\
    &\quad-C'\norm{\Gamma_\alpha}^{\abs vn}-C''((1-\norm {\Gamma_\alpha})^{-1}-1)\norm{\Gamma_\alpha}^{\abs vn}\Big|\\
    \le&\abs{[u0^{\abs vP(\alpha)n}]_{\mc S_\alpha}-C'\norm {\Gamma_\alpha}^{\abs vn}}\\
    &\quad+\abs{\sum_{i=1}^MC''\norm {\Gamma_\alpha}^{(n-i)\abs v}-C''((1-\norm {\Gamma_\alpha})^{-\abs v}-1)\norm {\Gamma_\alpha}^{\abs vn}}\\
    &\quad+\sum_{i=1}^M\abs{[v0^{P(\alpha)(n-i)\abs v}]_{\mc S_\alpha}-\norm {\Gamma_\alpha}^{(n-i)\abs v}}\\
    &\quad+\sum_{i=M+1}^{n-N}[v0^{P(\alpha)(n-i)\abs v}]_{\mc S_\alpha}+[v^{P(\alpha)N}]_{\mc S_\alpha}\\
    <&\eps\norm {\Gamma_\alpha}^{\abs vn}+C''\eps\norm {\Gamma_\alpha}^{\abs vn}+\sum_{i=1}^M\abs{[v0^{P(\alpha)(n-i)\abs v}]_{\mc S_\alpha}-\norm {\Gamma_\alpha}^{(n-i)\abs v}}\\
    &\quad +\sum_{i=M+1}^{n-N}[v0^{P(\alpha)(n-i)\abs v}]_{\mc S_\alpha}+\eps\norm {\Gamma_\alpha}^{\abs vn}\\
    \le&(C''+2)\eps\norm {\Gamma_\alpha}^{\abs vn}+\sum_{i=1}^M\eps M^{-1}\norm {\Gamma_\alpha}^{\abs vn}+\sum_{i=M+1}^{n-N}(C''+1)\norm {\Gamma_\alpha}^{(n-i)\abs v}\\
    \le&(C''+3)\eps\norm {\Gamma_\alpha}^{\abs vn}+(C''+1)\norm {\Gamma_\alpha}^{\abs vn}\sum_{i=M+1}^\infty\norm {\Gamma_\alpha}^{-\abs vi}\\
    <&(2C''+4)\eps\norm {\Gamma_\alpha}^{\abs vn}.
  \end{align*}
\end{proof}

\begin{proof}[Proof of \cref{cobhamsemenov}]
 We use the proof of \cite{Bes-CS}, using our \cref{pumping} instead of their Proposition 2.4. Mainly, one just needs to replace the occurrences of $uv^nw$ with $uv^{pn}w$, in particular in the definition of $X$ on page 209. We leave the details to the reader.
\end{proof}

\subsection{The general case} We are now ready to finish the proofs of Theorem A and B. We restate Theorem B using our notation.

\begin{thm}\label{thm:B}
Let $\alpha,\beta  \in \R_{>1}$ be multiplicatively independent irrational Pisot numbers such that $\Q(\alpha)\cap\Q(\beta)=\Q$, and let $X\subseteq[0,1]^d$ be both sequentially $\mathcal{S}_{\alpha}$- and $\mathcal{S}_{\beta}$-recognizable. Then $X$ is definable in $(\R,{<},+,\Z)$.
\end{thm}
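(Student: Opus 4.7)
My plan is to argue by contradiction: suppose $X\subseteq[0,1]^d$ is sequentially $\mathcal{S}_\alpha$- and $\mathcal{S}_\beta$-recognizable but not definable in $(\R,{<},+,\Z)$. Since $(\R,{<},+,1)$ is a reduct of $(\R,{<},+,\Z)$, $X$ is then also not definable in $(\R,{<},+,1)$. Via \cref{prop:srimppr} I promote sequential to parallel recognizability in both bases, and then \cref{definmultidim} yields a set $Y\subseteq[0,1]$ which is $\mathcal{S}_\alpha$-recognizable (sequentially, since $Y$ is bounded and the moreover clause of \cref{thm:recdef} applies), not definable in $(\R,{<},+,1)$, and both $\alpha^N$- and $\beta^N$-product stable in $[0,1]$ for some $N\in\N_{>0}$. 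In particular $Y$ is neither $\emptyset$ nor $[0,1]$.

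Next I extend $Y$ to a subset of $\R_{>0}$ by setting
\[
\tilde Y:=\bigcup_{k\ge 0}\alpha^{kN}Y,
\]
equivalently $x\in\tilde Y$ iff $\alpha^{-kN}x\in Y$ for some $k\in\N$. Using $\alpha^N$-product stability of $Y$ in $[0,1]$ one checks $\tilde Y\cap[0,1]=Y$, so $\tilde Y\notin\{\emptyset,\R_{>0}\}$. By construction $\tilde Y$ is $\alpha^N$-product stable on $\R_{>0}$. For $\beta^N$-product stability on $\R_{>0}$, given $x>0$ I pick $k$ large enough that both $\alpha^{-kN}x$ and $\alpha^{-kN}\beta^Nx=\beta^N(\alpha^{-kN}x)$ lie in $[0,1]$; then $\beta^N$-product stability of $Y$ in $[0,1]$ gives $\alpha^{-kN}x\in Y\Leftrightarrow\alpha^{-kN}\beta^Nx\in Y$, and $\alpha^N$-product stability of $\tilde Y$ on $\R_{>0}$ converts this to $x\in\tilde Y\Leftrightarrow\beta^Nx\in\tilde Y$. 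Sequential $\mathcal{S}_\alpha$-recognizability of $\tilde Y$ I intend to obtain by modifying a Muller automaton for $Y$ so as to nondeterministically guess the position of the ``true'' radix point within the input word, exploiting that multiplication by $\alpha^N$ is a radix-shift in $\mathcal{S}_\alpha$-representations.

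Finally I apply \cref{prodstabletriv} with $\alpha^N$ and $\beta^N$ in place of $\alpha,\beta$. The bases $\alpha^N,\beta^N$ are Pisot (their Galois conjugates are $N$-th powers of conjugates of $\alpha,\beta$, hence remain strictly inside the unit disk) and multiplicatively independent. The field hypothesis $\beta^N\notin\Q(\alpha^N)$ follows from
\[
\Q(\alpha^N)\cap\Q(\beta^N)\subseteq\Q(\alpha)\cap\Q(\beta)=\Q
\]
together with the observation that an irrational Pisot number $\beta$ cannot satisfy $\beta^N\in\Q$: otherwise $\beta^N$ would be a rational algebraic integer, hence some $q\in\Z_{\ge 2}$, so every Galois conjugate of $\beta=q^{1/N}$ would have modulus $q^{1/N}>1$, contradicting Pisotness. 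By \cref{gammpowreg}, $\tilde Y$ is sequentially $\mathcal{S}_{\alpha^N}$-recognizable, so \cref{prodstabletriv} forces $\tilde Y\in\{\emptyset,\R_{>0}\}$, contradicting nontriviality and completing the proof.

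The step I expect to be most delicate is establishing sequential $\mathcal{S}_\alpha$-recognizability of $\tilde Y$: in the sequential encoding the radix point occupies a single fixed position in the input word, so absorbing an arbitrary $\alpha^N$-shift requires genuine nondeterminism in the Büchi automaton. Passing to parallel recognizability would be easier but is not good enough, since \cref{prodstabletriv} is stated only in the sequential regime.
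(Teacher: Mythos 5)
Your proposal is correct and takes essentially the same route as the paper's proof: reduce via \cref{prop:srimppr} and \cref{definmultidim} to a one-dimensional set that is $\alpha^N$- and $\beta^N$-product stable in $[0,1]$, close it under multiplication by powers of the base to get an unbounded product-stable set, and invoke \cref{prodstabletriv}; your explicit verification of the hypotheses for $\alpha^N,\beta^N$ just spells out a replacement the paper performs silently. The step you flag as delicate is dispatched in the paper by the single observation that $[u\star v]_{\mathcal{S}_\alpha}$ lies in the unbounded set if and only if the word with the radix point moved to the far left (after padding the integral part to length divisible by $N$) represents an element of $Y$ — product stability lets you normalize the shift, so no unbounded nondeterministic guess is needed and your automaton construction goes through.
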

\begin{proof}
Let $X\subseteq [0,1]^n$ be 
both sequentially $\mathcal{S}_{\alpha}$- and $\mathcal{S}_{\beta}$-recognizable.
Towards a contradiction, suppose that $X$ is not definable in $(\R,{<},+,\Z)$.
By \cref{definmultidim}, we can assume that $n=1$ and $X$ is both $\alpha$- and $\beta$-product stable in $[0,1]$.
  Set 
\[
Z:=\{\alpha^kx \ : \ k\in\N, x\in X\}.
\]
Obviously, $Z$ is $\alpha$-product stable in $\R_{>0}$.
We observe that $Z$ is sequentially $\mathcal{S}_{\alpha}$-recognizable, because for all $u\in\set{0,\dots,\floor{\alpha}}^*,v\in\set{0,\dots,\floor{\alpha}}^\omega$ 
\[
[u\star v]_{\mc S_\alpha}\in Z \text{ if and only if } [\star uv]_{\mc S_\alpha}\in X.
\]
Clearly, $Z$ is $\alpha$-product stable in $\R_{>0}$. Since $X$ is both $\alpha$- and $\beta$-product stable in $[0,1]$, we get that
\[
Z=\{\beta^k x\ : \ k\in\N,x\in X\}.
\]
Thus $Z$ is $\beta$-product stable in $\R_{>0}$. By \cref{prodstabletriv} we know that $Z$ is either $\emptyset$ or $\R_{>0}$. However, it follows from $\alpha$-product stability that  $Z\cap[0,1]=X$. This contradicts that $X$ is not definable in $(\R,{<},+,\Z)$.
\end{proof}

\begin{proof}[Proof of Theorem A]
By \cref{definbounded} we may assume that either $X\subseteq\N^n$ or $X\subseteq[0,1]^n$. If $X\subseteq\N^n$, then \cref{cobhamsemenov} applies. Suppose that $X\subseteq[0,1]^n$. Then $X$ is both sequentially $\alpha$- and sequentially $\beta$-regular by \cref{thm:recdef} and we can apply \cref{thm:B}.
\end{proof}

\section{Conclusion}

In this paper, we prove a Cobham-Sem\"enov theorem for scalar multiplication: let $\alpha,\beta \in \R_{>0}$ be such that $\alpha,\beta$ are quadratic and $\Q(\alpha)\neq \Q(\beta)$, then every set definable in both $\mathcal{R}_{\alpha}$ and $\mathcal{R}_{\beta}$, is already definable in $(\R,{<},+,\Z)$. None of the assumptions on $\alpha$ and $\beta$ can be dropped. It is clear that if $\Q(\alpha)=\Q(\beta)$ and $\alpha$ is irrational, then both $\mc R_{\alpha}$ and $\mc R_{\beta}$ define multiplication by $\alpha$, yet this function is not definable in $(\R,{<},+,\Z)$. Furthermore, suppose that $\alpha$ is not quadratic. As noted in the introduction,  in this situation $\mathcal{R}_{\alpha}$ defines every arithmetical subset of $\N^m$, since $1,\alpha,\alpha^2$ are $\Q$-linearly independent. If $\beta$ is also not quadratic, all arithmetical subsets satisfy the assumption, but not all of them the conclusion of Theorem A. Even when $\beta$ is quadratic, one can check that the set of denominators of the convergents of $\beta$ is definable in both $\mc R_{\alpha}$ and $\mc{R}_{\beta}$, assuming $\alpha$ is not quadratic. This set is not definable in $(\R,{<},+,\Z)$ when $\beta$ is irrational, witnessing the failure of the conclusion of Theorem A.\newline

\noindent As part of the proof of Theorem A, we establish in Theorem B a similar result for bounded subset of $\R^n$ that are $\alpha$- and $\beta$-recognizable in the sense of \cite{CLR}, where  $\alpha,\beta  \in \R_{>1}$ are multiplicatively independent irrational Pisot numbers such that $\Q(\alpha)\cap\Q(\beta)=\Q$. Following the argument in \cite[p.118]{CLR}, this theorem can be used to extend Adamczeski and Bell's Cobham-style theorem for fractals \cite[Theorem 1.4]{AdamczeskiBell} and its higher dimensional analogues as proven in \cite{CLR} and Chan and Hare \cite{CH-Semenov} to $\alpha$- and $\beta$-self-similar sets as defined in \cite[Definition 60]{CLR}.\newline

\noindent There are several immediate questions we have to leave open. In particular, we do not know whether Theorem A holds if we replace definability by definability with parameters, and whether Theorem B
holds for unbounded subsets of $\R^n$.
\bibliographystyle{amsplain}
\bibliography{biblio}

\providecommand{\bysame}{\leavevmode\hbox to3em{\hrulefill}\thinspace}
\providecommand{\MR}{\relax\ifhmode\unskip\space\fi MR }
\providecommand{\MRhref}[2]{%
  \href{http://www.ams.org/mathscinet-getitem?mr=#1}{#2}
}
\providecommand{\href}[2]{#2}
\begin{thebibliography}{10}

\bibitem{AdamczeskiBell}
Boris Adamczewski and Jason Bell, \emph{An analogue of {C}obham's theorem for
  fractals}, Trans. Amer. Math. Soc. \textbf{363} (2011), no.~8, 4421--4442.
  \MR{2792994}

\bibitem{LIRA}
Bernd Becker, Christian Dax, Jochen Eisinger, and Felix Klaedtke, \emph{Lira:
  Handling constraints of linear arithmetics over the integers and the reals:
  (tool paper)}, International Conference on Computer Aided Verification,
  Springer, 2007, pp.~307--310.

\bibitem{Bes-CS}
Alexis B\`es, \emph{An extension of the {C}obham-{S}em\"{e}nov theorem}, J.
  Symbolic Logic \textbf{65} (2000), no.~1, 201--211. \MR{1782115}

\bibitem{BesC1}
Alexis B\`es and Christian Choffrut, \emph{Theories of real addition with and
  without a predicate for integers}, Log. Methods Comput. Sci. \textbf{17}
  (2021), no.~2, Paper No. 18, 27. \MR{4269881}

\bibitem{BesChoffrut2022}
\bysame, \emph{Decidability of definability issues in the theory of real
  addition}, Fund. Inform. \textbf{188} (2022), no.~1, 15--39. \MR{4556493}

\bibitem{BBB10}
Bernard Boigelot, Julien Brusten, and V\'{e}ronique Bruy\`ere, \emph{On the
  sets of real numbers recognized by finite automata in multiple bases}, Log.
  Methods Comput. Sci. \textbf{6} (2010), no.~1, 1:6, 17. \MR{2594958}

\bibitem{BBL09}
Bernard Boigelot, Julien Brusten, and J\'{e}r\^{o}me Leroux, \emph{A
  generalization of {S}emenov's theorem to automata over real numbers},
  Automated deduction---{CADE}-22, Lecture Notes in Comput. Sci., vol. 5663,
  Springer, Berlin, 2009, pp.~469--484. \MR{2550354}

\bibitem{BJW}
Bernard Boigelot, S\'ebastien Jodogne, and Pierre Wolper, \emph{An effective
  decision procedure for linear arithmetic over the integers and reals}, ACM
  Trans. Comput. Log. \textbf{6} (2005), no.~3, 614--633. \MR{2147298}

\bibitem{BRW}
Bernard Boigelot, St{\'e}phane Rassart, and Pierre Wolper, \emph{On the
  expressiveness of real and integer arithmetic automata}, Automata, Languages
  and Programming (Berlin, Heidelberg), Springer Berlin Heidelberg, 1998,
  pp.~152--163.

\bibitem{BH-Betrand}
V\'{e}ronique Bruy\`ere and Georges Hansel, \emph{Bertrand numeration systems
  and recognizability}, vol. 181, 1997, Latin American Theoretical INformatics
  (Valpara\'{\i}so, 1995), pp.~17--43. \MR{1463527}

\bibitem{BHMV}
V\'{e}ronique Bruy\`ere, Georges Hansel, Christian Michaux, and Roger
  Villemaire, \emph{Logic and {$p$}-recognizable sets of integers}, vol.~1,
  1994, Journ\'{e}es Montoises (Mons, 1992), pp.~191--238. \MR{1318968}

\bibitem{Buechi}
J.~Richard B\"{u}chi, \emph{On a decision method in restricted second order
  arithmetic}, Logic, {M}ethodology and {P}hilosophy of {S}cience ({P}roc. 1960
  {I}nternat. {C}ongr.), Stanford Univ. Press, Stanford, Calif., 1962,
  pp.~1--11. \MR{0183636}

\bibitem{CH-Semenov}
Davy Ho-Yuen Chan and Kevin~G. Hare, \emph{A multi-dimensional analogue of
  {C}obham's theorem for fractals}, Proc. Amer. Math. Soc. \textbf{142} (2014),
  no.~2, 449--456. \MR{3133987}

\bibitem{CLR}
\'{E}milie Charlier, Julien Leroy, and Michel Rigo, \emph{An analogue of
  {C}obham's theorem for graph directed iterated function systems}, Adv. Math.
  \textbf{280} (2015), 86--120. \MR{3350214}

\bibitem{CSV}
Swarat Chaudhuri, Sriram Sankaranarayanan, and Moshe~Y. Vardi, \emph{Regular
  real analysis}, 2013 28th {A}nnual {ACM}/{IEEE} {S}ymposium on {L}ogic in
  {C}omputer {S}cience ({LICS} 2013), IEEE Computer Soc., Los Alamitos, CA,
  2013, pp.~509--518. \MR{3323837}

\bibitem{Cobham}
Alan Cobham, \emph{On the base-dependence of sets of numbers recognizable by
  finite automata}, Math. Systems Theory \textbf{3} (1969), 186--192.
  \MR{250789}

\bibitem{durandrigo}
Fabien Durand and Michel Rigo, \emph{On {C}obham's theorem}, Handbook of
  automata theory. {V}ol. {II}. {A}utomata in mathematics and selected
  applications, EMS Press, Berlin, 2021, pp.~947--986. \MR{4380949}

\bibitem{FQSW}
Martin Fr\"{a}nzle, Karin Quaas, Mahsa Shirmohammadi, and James Worrell,
  \emph{Effective definability of the reachability relation in timed automata},
  Inform. Process. Lett. \textbf{153} (2020), 105871, 4. \MR{4027733}

\bibitem{Frougny92}
Christiane Frougny, \emph{Representations of numbers and finite automata},
  Math. Systems Theory \textbf{25} (1992), no.~1, 37--60. \MR{1139094}

\bibitem{H-Twosubgroups}
Philipp Hieronymi, \emph{Expansions of the ordered additive group of real
  numbers by two discrete subgroups}, J. Symb. Log. \textbf{81} (2016), no.~3,
  1007--1027. \MR{3569117}

\bibitem{H-multiplication}
\bysame, \emph{When is scalar multiplication decidable?}, Ann. Pure Appl. Logic
  \textbf{170} (2019), no.~10, 1162--1175. \MR{3979325}

\bibitem{decidsturmian}
Philipp Hieronymi, Dun Ma, Reed Oei, Luke Schaeffer, Christian Schulz, and
  Jeffrey Shallit, \emph{Decidability for {S}turmian words}, 30th {EACSL}
  {A}nnual {C}onference on {C}omputer {S}cience {L}ogic, LIPIcs. Leibniz Int.
  Proc. Inform., vol. 216, Schloss Dagstuhl. Leibniz-Zent. Inform., Wadern,
  2022, pp.~Art. No. 24, 23. \MR{4405042}

\bibitem{hnp}
Philipp Hieronymi, Danny Nguyen, and Igor Pak, \emph{Presburger arithmetic with
  algebraic scalar multiplications}, Log. Methods Comput. Sci. \textbf{17}
  (2021), no.~3, Paper No. 4, 34. \MR{4298407}

\bibitem{HTycho}
Philipp Hieronymi and Michael Tychonievich, \emph{Interpreting the projective
  hierarchy in expansions of the real line}, Proc. Amer. Math. Soc.
  \textbf{142} (2014), no.~9, 3259--3267. \MR{3223381}

\bibitem{KhZ}
Mohsen Khani and Afshin Zarei, \emph{The additive structure of integers with
  the lower {W}ythoff sequence}, Arch. Math. Logic \textbf{62} (2023), no.~1-2,
  225--237. \MR{4535932}

\bibitem{aut_theory}
Bakhadyr Khoussainov and Anil Nerode, \emph{Automata theory and its
  applications}, Progress in Computer Science and Applied Logic, vol.~21,
  Birkh\"auser Boston, Inc., Boston, MA, 2001. \MR{1839464}

\bibitem{Krebs-Reasonable}
Thijmen J.~P. Krebs, \emph{A more reasonable proof of {C}obham's theorem},
  Internat. J. Found. Comput. Sci. \textbf{32} (2021), no.~2, 203--207.
  \MR{4218824}

\bibitem{Lothaire}
M.~Lothaire, \emph{Algebraic combinatorics on words}, Encyclopedia of
  Mathematics and its Applications, vol.~90, Cambridge University Press,
  Cambridge, 2002. \MR{1905123}

\bibitem{miller-ivp}
Chris Miller, \emph{Expansions of dense linear orders with the intermediate
  value property}, J. Symbolic Logic \textbf{66} (2001), no.~4, 1783--1790.
  \MR{1877021}

\bibitem{Ost}
Alexander Ostrowski, \emph{Bemerkungen zur {T}heorie der {D}iophantischen
  {A}pproximationen}, Abh. Math. Sem. Univ. Hamburg \textbf{1} (1922), no.~1,
  77--98. \MR{3069389}

\bibitem{Renyi}
A.~R\'{e}nyi, \emph{Representations for real numbers and their ergodic
  properties}, Acta Math. Acad. Sci. Hungar. \textbf{8} (1957), 477--493.
  \MR{97374}

\bibitem{rigo}
Michel Rigo, \emph{Numeration systems: A link between number theory and formal
  language theory}, Developments in Language Theory (Berlin, Heidelberg) (Yuan
  Gao, Hanlin Lu, Shinnosuke Seki, and Sheng Yu, eds.), Springer Berlin
  Heidelberg, 2010, pp.~33--53.

\bibitem{RS}
Andrew~M. Rockett and Peter Sz\"{u}sz, \emph{Continued fractions}, World
  Scientific Publishing Co., Inc., River Edge, NJ, 1992. \MR{1188878}

\bibitem{VIRAS}
Johannes Schoisswohl, Laura Kov\'acs, and Konstantin Korovin, \emph{Viras:
  Conflict-driven quantifier elimination for integer-real arithmetic},
  Proceedings of 25th Conference on Logic for Programming, Artificial
  Intelligence and Reasoning (Nikolaj Bjorner, Marijn Heule, and Andrei
  Voronkov, eds.), EPiC Series in Computing, vol. 100, EasyChair, 2024,
  pp.~147--164.

\bibitem{Semenov}
A.~L. Semenov, \emph{The {P}resburger nature of predicates that are regular in
  two number systems}, Sibirsk. Mat. \v{Z}. \textbf{18} (1977), no.~2,
  403--418, 479. \MR{0450050}

\bibitem{skolem}
Thoralf Skolem, \emph{{\"U}ber einige {S}atzfunktionen in der {A}rithmetik},
  Skr. Norske Vidensk. Akad., Oslo, Math.-naturwiss. Kl. \textbf{7} (1931),
  1--28.

\bibitem{Weispf}
Volker Weispfenning, \emph{Mixed real-integer linear quantifier elimination},
  Proceedings of the 1999 {I}nternational {S}ymposium on {S}ymbolic and
  {A}lgebraic {C}omputation ({V}ancouver, {BC}), ACM, New York, 1999,
  pp.~129--136. \MR{1802076}

\bibitem{Zeckendorf}
E.~Zeckendorf, \emph{Repr\'esentation des nombres naturels par une somme de
  nombres de {F}ibonacci ou de nombres de {L}ucas}, Bull. Soc. Roy. Sci.
  Li\`ege \textbf{41} (1972), 179--182. \MR{308032}

\end{thebibliography}
\end{document}